\newcommand{\ethi}{\selectlanguage{ethiop}}  \selectlanguage{english}
\DeclareMathAlphabet\mathbb{U}{msb}{m}{n}
\newcommand{\bi}{\begin{itemize}}
\newcommand{\ei}{\end{itemize}}
\newcommand{\bd}{\begin{description}}
\newcommand{\ed}{\end{description}}
\newcommand{\bee}{\begin{enumerate}}
\newcommand{\eee}{\end{enumerate}}
\def\lra{\longrightarrow}
\def\ra{\rightarrow}
\def\llrra{\leftrightarrow}
\def\rtt{\,\rightthreetimes\,}
\newcommand{\xleftrightarrow}[2][]{\ext@arrow 3359\leftrightarrowfill@{#1}{#2}}
\newcommand{\xdashrightarrow}[2][]{\ext@arrow 0359\rightarrowfill@@{#1}{#2}}
\newcommand{\xdashleftarrow}[2][]{\ext@arrow 3095\leftarrowfill@@{#1}{#2}}
\newcommand{\xdashleftrightarrow}[2][]{\ext@arrow 3359\leftrightarrowfill@@{#1}{#2}}
\def\rightarrowfill@@{\arrowfill@@\relax\relbar\rightarrow}
\def\leftarrowfill@@{\arrowfill@@\leftarrow\relbar\relax}
\def\leftrightarrowfill@@{\arrowfill@@\leftarrow\relbar\rightarrow}
\def\arrowfill@@#1#2#3#4{%
  $\m@th\thickmuskip0mu\medmuskip\thickmuskip\thinmuskip\thickmuskip
   \relax#4#1
   \xleaders\hbox{$#4#2$}\hfill
   #3$%
}
\newcommand{\xRightarrow}[2][]{\ext@arrow 0359\Rightarrowfill@{#1}{#2}}
\newcommand{\xLeftarrow}[2][]{\ext@arrow 0359\Leftarrowfill@{#1}{#2}}
\newcommand*{\doublerightarrow}[2]{\mathrel{
  \settowidth{\@tempdima}{$\scriptstyle#1$}
  \settowidth{\@tempdimb}{$\scriptstyle#2$}
  \ifdim\@tempdimb>\@tempdima \@tempdima=\@tempdimb\fi
  \mathop{\vcenter{
    \offinterlineskip\ialign{\hbox to\dimexpr\@tempdima+1em{##}\cr
    \rightarrowfill\cr\noalign{\kern-.3ex}
    \rightarrowfill\cr}}}\limits^{\!#1}_{\!#2}}}
\newcommand*{\triplerightarrow}[1]{\mathrel{
  \settowidth{\@tempdima}{$\scriptstyle#1$}
  \mathop{\vcenter{
    \offinterlineskip\ialign{\hbox to\dimexpr\@tempdima+1em{##}\cr
\rightarrowfill\cr\noalign{\kern-.3ex}
    \rightarrowfill\cr\noalign{\kern-.3ex}
    \rightarrowfill\cr}}}\limits^{\!#1}}}
\newcommand*{\XtoXX}[2]{\mathrel{
  \settowidth{\@tempdima}{$\scriptstyle#1$}
  \mathop{\vcenter{
    \offinterlineskip\ialign{\hbox to\dimexpr\@tempdima+1em{##}\cr
\leftarrowfill\cr\noalign{\kern-.3ex}
    \rightarrowfill\cr\noalign{\kern-.3ex}
    \leftarrowfill\cr}}}\limits^{\!#1}_{\!#2}}}
\newcommand*{\XXtoXXX}[1]{\mathrel{
  \settowidth{\@tempdima}{$\scriptstyle#1$}
  \mathop{\vcenter{
    \offinterlineskip\ialign{\hbox to\dimexpr\@tempdima+1em{##}\cr
\leftarrowfill\cr\noalign{\kern-.1ex}
    \rightarrowfill\cr\noalign{\kern-.3ex}
\leftarrowfill\cr\noalign{\kern-.3ex}
    \rightarrowfill\cr\noalign{\kern-.3ex}
    \leftarrowfill\cr}}}\limits^{\!#1}}}
\newcommand*{\XXXtoXXXX}[1]{\mathrel{
  \settowidth{\@tempdima}{$\scriptstyle#1$}
  \mathop{\vcenter{
    \offinterlineskip\ialign{\hbox to\dimexpr\@tempdima+1em{##}\cr
\leftarrowfill\cr\noalign{\kern-.3ex}
    \rightarrowfill\cr\noalign{\kern-.3ex}
\leftarrowfill\cr\noalign{\kern-.3ex}
    \rightarrowfill\cr\noalign{\kern-.3ex}
\leftarrowfill\cr\noalign{\kern-.3ex}
    \rightarrowfill\cr\noalign{\kern-.3ex}
    \leftarrowfill\cr}}}\limits^{\!#1}}}
\def\xra{\xrightarrow}
\def\ZZ{\Bbb Z}
\def\FFF{\mathfrak F}
\def\xra{\xrightarrow}
 \def\inv{^{-1}}
\def\tp{\text{tp}}
\def\cof{\text{cof}}
\def\lr{\text{lr}}
\def\rl{\text{rl}}
\def\Top{\text{Top}}
\def\preorders{{\text{preorders}}}
\def\Paths{\text{Paths\,}}
\def\antidiscrete{\text{antidiscrete}}
\def\lrl{\text{l}}
\def\rlr{\text{r}}
\def\rrt#1#2#3#4#5#6{\xymatrix{ {#1} \ar[r]^{} \ar@{->}[d]_{#2} & {#4} \ar[d]^{#5} \\ {#3}  \ar[r] \ar@{-->}[ur]^{}& {#6} }}
\def\NN{\Bbb N}
\def\RR{\Bbb R}
\def\card{\,{\mathrm{card}\,}}
\def\Dop{\Delta^{\mathrm{op}}}
\def\Sets{\mathrm{Sets}}
\def\sSets{\mathrm{sSets}}
\def\Topp{\mathrm{Top}}
\def\LL{\mathcal L}
\def\NN{\mathbb N}
\def\id{{\text{id}}}
\def\dist{\text{dist}}
\def\diag{\text{diag}}
\def\Filt{{\ethi\ethmath{wA}}}
\def\sFilt{{{\ethi\ethmath{\raisebox{-2.39pt}{nI}\raisebox{2.39pt}\,\!\!wA}}}}
\def\Filth{{\ethi\ethmath{wA}}}
\def\sFilth{{{\ethi\ethmath{\raisebox{-2.39pt}{nI}\raisebox{2.39pt}\,\!\!wA}}}}
\def\sFilthLocal{{{\ethi\ethmath{\raisebox{-2.39pt}{nI}\raisebox{2.39pt}\,\!\!wE}}}}
\def\Filt{{\ethi\ethmath{wA}}}
\def\sFilthLocal{{{\ethi\ethmath{\raisebox{-2.39pt}{nI}\raisebox{2.39pt}\,\!\!wE}}}}
\def\Ob{\text{Ob\,}}
\def\sSet{\text{sSet}}
\def\diag{\text{diag}}
\def\cart{\text{cart}}
\def\const{\text{const}}
\def\FFilt{{\ethi\ethmath{wE}}}
\def\sFFilt{{\ethi\ethmath{\raisebox{-2.39pt}{nE}\raisebox{2.39pt}\,\!\!wE}}}
\def\Stone{{\ethi{\ethmath{cI}}}}
\def\ttt{{\ethi{\ethmath{pa}}}}
\def\mU{{\ethi{\ethmath{mi}}}}
\def\mmU{{\ethi{\ethmath{mA}}}}
\def\Arch{\text{Arch}}
\def\hom#1#2{\left\{#1 \xrightarrow [\text{}]{} #2\right\}}
\def\homm#1#2#3{\left\{{#2} \xrightarrow [\text{}{#1}]{} {#3}\right\}}
\def\hommS#1#2#3{\left\{{#2} \xrightarrow [\text{}{#1}]{sSets} {#3}\right\}}
\def\Hom#1#2{\left\{#1 \xRightarrow [\text{}]{} #2\right\}}
\def\Homm#1#2#3{\left\{{#2} \xRightarrow [\text{}{#1}]{} {#3}\right\}}
\def\HommS#1#2#3{\left\{{#2} \xRightarrow [\text{}{#1}]{sSets} {#3}\right\}}
\def\homSets#1#2{\text{Hom}\left(#1 ,  #2\right)}
\def\hommSets#1#2#3{\text{Hom}_{#1}\left({#2} , {#3}\right)}
\def\HomSets#1#2{\text{\underline{Hom}}\left(#1 ,  #2\right)}
\def\HommSets#1#2#3{\text{\underline{Hom}}_{#1}\left({#2} , {#3}\right)}
\title[Shelah's classification theory and Quillen's negation]
{Remarks on Shelah's classification theory and Quillen's negation
}
\author{misha gavrilovich\thanks{Then what do you gain by pretending so? \ \ \ \ \ \ \ \ \ \ \ \ \ \ \ \ \ \ \ \ \ \ \ \ \ \ \ \ \ \ \ \ \ \ \ \ \ \ \ \ \ \ \ \ \ \ \ \ \ \ \ \ \    {\tiny 9.27\!/\!30,10.5\!/\!14 2020}}}
\address{Appendices unfinished.} 
\address{Corrections to be sent to \href{https://t.me/joinchat/GVRrKxbSO8EWehZYReTKeQ}{here} or {\tt mi\!\!\!ishap\!\!\!p@sd\!\!\!df.org}} \address{\url{https://mishap.sdf.org/yetanothernotanobfuscatedstudy.pdf}}
\address{National Research University Higher School of Economics, Saint-Petersburg;
Institute for Regional Economics Studies of the Russian Academy of Sciences (IRES RAS)
38 Serpuhovskaya st., Saint-Petersburg.}
\begin{document}\selectlanguage{english}\catcode`\_=8\catcode`\^=7 \catcode`\_=8

\begin{abstract} We 
give category-theoretic reformulations of 
stability, NIP, NTP, and non-dividing 
by observing that their characterisations in terms of indiscernible sequences are naturally expressed
as Quillen lifting properties 
of certain morphisms associated with linear orders, in a certain category
extending the categories of topological spaces and of simplicial sets.

This suggests an approach to a 
homotopy theory for model theory.



\end{abstract}
\maketitle {\small\tiny \setcounter{tocdepth}{2} 
\contentsline {section}{\tocsection {}{1}{Introduction}}{1}{section.1}
\contentsline {subsubsection}{\tocsubsubsection {}{}{The main construction}}{2}{section*.1}
\contentsline {subsubsection}{\tocsubsubsection {}{}{Stability and NIP as Quillen negation}}{2}{section*.2}
\contentsline {subsubsection}{\tocsubsubsection {}{}{Preliminary results in Appendices}}{3}{section*.3}
\contentsline {subsubsection}{\tocsubsubsection {}{}{1.2. Further work}}{4}{section*.4}
\contentsline {section}{\tocsection {}{2}{The category $\sFilth$}}{4}{section.2}
\contentsline {subsection}{\tocsubsection {}{2.1}{The category of simplicial filters}}{4}{subsection.2.1}
\contentsline {subsubsection}{\tocsubsubsection {}{2.1.1}{The category of filters}}{4}{subsubsection.2.1.1}
\contentsline {subsubsection}{\tocsubsubsection {}{2.1.2}{Filters: notation and intuition}}{5}{subsubsection.2.1.2}
\contentsline {subsubsection}{\tocsubsubsection {}{2.1.3}{The category of simplicial filters}}{6}{subsubsection.2.1.3}
\contentsline {subsubsection}{\tocsubsubsection {}{2.1.4}{Simplicial notation}}{6}{subsubsection.2.1.4}
\contentsline {subsubsection}{\tocsubsubsection {}{2.1.5}{Simplicial filters: intuition}}{6}{subsubsection.2.1.5}
\contentsline {subsection}{\tocsubsection {}{2.2}{Examples of simplicial filters and their morphisms.}}{6}{subsection.2.2}
\contentsline {subsubsection}{\tocsubsubsection {}{2.2.1}{Discrete and antidiscrete}}{7}{subsubsection.2.2.1}
\contentsline {subsubsection}{\tocsubsubsection {}{2.2.2}{Corepresented simplicial sets}}{7}{subsubsection.2.2.2}
\contentsline {subsubsection}{\tocsubsubsection {}{2.2.3}{Metric spaces and the filter of uniform neighbourhoods of the main diagonal}}{8}{subsubsection.2.2.3}
\contentsline {subsubsection}{\tocsubsubsection {}{2.2.4}{An explicit set-theoretic description of a simplicial filter on a simplicial set corepresented by a preorder}}{10}{subsubsection.2.2.4}
\contentsline {subsubsection}{\tocsubsubsection {}{2.2.5}{The coarsest and the finest simplicial filter induced by a filter}}{10}{subsubsection.2.2.5}
\contentsline {subsubsection}{\tocsubsubsection {}{2.2.6}{Topological and uniform structures as simplicial filters}}{11}{subsubsection.2.2.6}
\contentsline {section}{\tocsection {}{3}{Model theory}}{10}{section.3}
\contentsline {subsection}{\tocsubsection {}{3.1}{Shelah representation and morphisms of models}}{12}{subsection.3.1}
\contentsline {subsubsection}{\tocsubsubsection {}{3.1.1}{Shelah representation}}{12}{subsubsection.3.1.1}
\contentsline {subsubsection}{\tocsubsubsection {}{3.1.2}{A category-theoretic characterisation of classes of stable models}}{14}{subsubsection.3.1.2}
\contentsline {subsection}{\tocsubsection {}{3.2}{Stability as a Quillen negation analogous to a path lifting property}}{13}{subsection.3.2}
\contentsline {subsubsection}{\tocsubsubsection {}{3.2.1}{An informal explanation}}{15}{subsubsection.3.2.1}
\contentsline {subsubsection}{\tocsubsubsection {}{3.2.2}{Simplicial filters associated with structures (fixme: models?))}}{15}{subsubsection.3.2.2}
\contentsline {subsubsection}{\tocsubsubsection {}{3.2.3}{Indiscernible sequences with repetitions}}{17}{subsubsection.3.2.3}
\contentsline {subsubsection}{\tocsubsubsection {}{3.2.4}{Stability as Quillen negation of indiscernible sets}}{13}{subsubsection.3.2.4}
\contentsline {subsection}{\tocsubsection {}{3.3}{NIP and eventually indiscernible sequences}}{18}{subsection.3.3}
\contentsline {subsubsection}{\tocsubsubsection {}{3.3.1}{Simplicial filters associated with filters on linear orders}}{20}{subsubsection.3.3.1}
\contentsline {subsubsection}{\tocsubsubsection {}{3.3.2}{NIP as almost a lifting property}}{20}{subsubsection.3.3.2}
\contentsline {subsubsection}{\tocsubsubsection {}{3.3.3}{NIP as a lifting property}}{21}{subsubsection.3.3.3}
\contentsline {subsubsection}{\tocsubsubsection {}{3.3.4}{Cauchy sequences: a formal analogy to indiscernible sequences}}{22}{subsubsection.3.3.4}
\contentsline {subsubsection}{\tocsubsubsection {}{3.3.5}{Stability as Quillen negation of eventually (order) indiscernible sequences}}{23}{subsubsection.3.3.5}
\contentsline {subsection}{\tocsubsection {}{3.4}{Questions}}{22}{subsection.3.4}
\contentsline {subsubsection}{\tocsubsubsection {}{3.4.1}{Double Quillen negation/orthogonal of a model}}{25}{subsubsection.3.4.1}
\contentsline {subsubsection}{\tocsubsubsection {}{3.4.2}{$ACF_0$- and stable replacement of a model}}{25}{subsubsection.3.4.2}

\contentsline {subsubsection}{\tocsubsubsection {}{3.4.3}{Levels of stability as iterated Quillen negations}}{26}{subsubsection.3.4.3}
\contentsline {subsubsection}{\tocsubsubsection {}{3.4.4}{Simple theories and tree properties}}{27}{subsubsection.3.4.4}
\contentsline {section}{\tocsection {}{}{References}}{25}{section*.5}
}

\section{Introduction}

The Stone space of types over a model is a topological space carrying
 important model-theoretic information, e.g. Cantor-Bendixon ranks, number of types. 
Unfortunately these spaces are ``not nice'' 
from the point of view of 
algebraic topology and the methods of homotopy theory 
do not apply to Stone spaces of models. 

In this note we attempt to rectify this situation.

We define the notion of a generalised Stone space of a model living in a rather large category extending the category of 
topological spaces, simplicial sets,  uniform structures (e.g., metric spaces with uniformly continuous maps), and 
of preorders (in several ways); a forgetful functor takes the generalised Stone space into the usual Stone space, 
or rather the space of elements of a model with the corresponding topology. 
These generalised Stone spaces and their morphisms carry information about 
indiscernible sequences: 
essentially, a morphism between the generalised Stone spaces
of two models is a map between sets of elements preserving indiscernible sequences 
in the sense that the image of any indiscernible sequence is necessarily an indiscernible sequence. 
Our key observation is that an indiscernible sequence in a model
is the same as a injective morphism from the linear order to the generalised Stone space of the model. 
This reformulations allows us to rewrite the characterisations of stability
and NIP 
in terms of indiscernible sequences 
as examples of a  standard trick from homotopy theory, the Quillen negation (orthogonality), 
with respect to certain explicitly given morphisms associated with linear orders.

Importantly, the definition of the generalised Stone space is easily obtained
by ``transcribing'' the definition of an indiscernible sequence 
in a particularly mechanistic, oversimplified manner 
reminiscent of the ``android'' 
in \href{mishap.sdf.org/by:gavrilovich-and-hasson/what:a-homotopy-theory-for-set-theory/gavrilovich-hasson-homotopy-approach-to-set-theory-ijm-pub.pdf}{[GH]}.
In a forthcoming paper\footnote{A preliminary exposition is given in Appendix 9, see \S9.1.5 and \S9.2.6.} 
we observe that ``transcribing'' the tree property in the definition of a simple theory [Tent-Ziegler, Def.7.2.1] 
leads to a different object associated with a model, and also leads to a lifting property.

The lifting property and Quillen negations (orthogonals) 
are a basic part of the language of a natural abstract setting for homotopy theory,
the formalism of model categories introduced 
by Quillen [Quillen]; 
see~\href{https://en.wikipedia.org/wiki/Lifting_property}{[Wikipedia,Lifting\_property]} for details 
and examples of elementary properties defined as iterated Quillen negation.  
Note an analogy to 
the model theoretic intuition of the properties we reformulate:
as their names suggest (not Independence Property, not Order Property), these properties
are usually thought of as  the negation of a corresponding property 
suggesting high combinatorial complexity. 

The meaning of a morphism between two generalised Stone spaces is reminiscent of the notion of 
one structure {\em representing} another introduced by 
Shelah  \href{http://mishap.sdf.org/Shelah_et_al-2016-Mathematical_Logic_Quarterly.pdf}{[CoSh:919]} 
(we quote  \href{https://arxiv.org/pdf/1412.0421.pdf}{[Sh:1043]}) 
`try to formalise the intuition 
that  ``the class of models of a stable first order theory is not much more complicated than the class              
of models $ M=(A, \dots, E_t, \dots)_{s \in I } $                                                                                                              
where $E^M_t$ is an equivalence relation on $A$ refining $E^M_s$ for $s < t$ ; and  $I$ is a linear order of cardinality $\le |T|$''.' 
We  reformulate a corollary of  a characterisation of stable theories in  
\href{http://mishap.sdf.org/Shelah_et_al-2016-Mathematical_Logic_Quarterly.pdf}{[CoSh:919]}
and give a more literal formalisation of this intuition: a theory is stable iff there is $\kappa$
such that for each model of the theory there is a surjective morphism to its generalised Stone space 
from a structure whose language consists of at most $\kappa$ equivalence relations and unary predicates (and nothing else). 
Based on this reformulation we suggest a conjecture with a category-theoretic characterisation of classes of models of stable theories.

\subsubsection*{The main construction} A little of category theoretic terminology allows to explain our construction in a few words. 
Intuitively,  our category is the category of simplicial sets equipped with a notion of smallness;
formally, it is the category of simplicial objects in the category of filters in the sense of Bourbaki. 
The generalised Stone space of a model is the simplicial set {\em corepresented} by the set of elements of the model:
an ``$n$-simplex'' is a tuple $(a_0,...,a_n)$ of elements, and it is considered ``very small'' iff 
it is ``very indiscernible'', i.e.~$\phi$-indiscernible for ``many'' formulas $\phi$ 
(possibly with some elements repeated several times, so that $(a,a...a)$ is also ``very small''). 
There is no single best definition of the generalised Stone space of a model: 
we may want consider slightly different notions of ``smallness'',
e.g.~``very indiscernible'' may rather mean  being part of an {\em infinite} $\phi$-indiscernible sequence for ``many'' $\phi$,
cf.~Definition~\ref{def:Mb} and Remark~\ref{rema:stone}. 

\subsubsection*{Stability and NIP as Quillen negation}
An indiscernible sequence is a map from a linear order to a model. Equivalently, it is  
a morphism of simplicial sets 
to the simplicial set corepresented by the set of elements of the model, 
from the simplicial set {\em corepresented by the linear order}: an $n$-simplex 
of that is a non-decreasing sequence $i_0\leq ... \leq i_n$,
and we always consider it ``small''. 
Then ``the image of a small simplex is necessarily small'' means exactly that the sequence is indiscernible, possibly with some elements repeated:
for each ``small (i.e.~arbitary) 
$n$-simplex''  $i_0\leq ... \leq i_n$ the ``$n$-simplex'' $(a_{i_0},...,a_{i_n})$ is ``small'', 
i.e.~indiscernible, though possibly with some elements repeated several times.

An indiscernible set is a map from a set to a model. In a similar way, 
it is equivalent to consider   
a morphism of simplicial sets 
to the simplicial set corepresented by the set of elements of the model, 
from the simplicial set corepresented by the set:  
 an $n$-simplex of that is a tuple $(i_0, ...  ,i_n)$,
and we always consider it ``small''. 
Then ``the image of a small simplex is necessarily small'' means exactly that the set is indiscernible:
for each ``small, i.e.~arbitrary, $n$-simplex''  $(i_0, ... ,i_n)$ the ``$n$-simplex $(a_{i_0},...,a_{i_n})$ is ``small'', 
i.e.~indiscernible, though possibly with some elements repeated several times.

Hence, the definition of stability ``each indiscernible sequence is an indiscernible set'' says
that, in the category of simplicial sets with a notion of smallness, a morphism to the model 
can be extended from one object to another, i.e.~is a Quillen lifting property 
(Lemma~\ref{main:stab:lift},\ref{main:eventually:stab:lift}).

In a similar manner we rewrite the characterisation of NIP ``each eventually indiscernible sequence is 
eventually indiscernible over a parameter'' (Lemma~\ref{nip:lifts}). To do this we use different notions of 
smallness: for the sequence, an $n$-simplex''  $i_0\leq ... \leq i_n$ is ``small'' if $i_0$ is large; 
and for the model, two notions: the ``$n$-simplex $(a_{i_0},...,a_{i_n})$ is ``small'' 
iff the tuple is $\phi$-indiscernible for ``many'' formulas $\phi$ with parameters,
or for for ``many'' parameter-free formulas $\phi$. 
 
\subsubsection*{Preliminary results in Appendices} In Appendices \S4-\S8 we include results not ready for publication 
but which we hope might provide some context for the observations in this note. 
In \href{https://mishap.sdf.org/yetanothernotanobfuscatedstudy.pdf}{[8,Appendix~5]} 
we sketch preliminary characterisations of NIP, non-dividing, and NOP, 
which we think are not optimal.
A  Cauchy sequence and an indiscernible sequence are morphisms from the same object associated with the filter of final segments 
of a linear order \href{https://mishap.sdf.org/yetanothernotanobfuscatedstudy.pdf}{[8,Lemma~\ref{IbtoMetrb},\ref{IbtoMb}]}.
In topology the definition of a complete metric space ``each Cauchy sequence has a limit'' is expressed as a lifting property 
involving a endomorphism of our category ``shifting dimension'' 
\href{https://mishap.sdf.org/yetanothernotanobfuscatedstudy.pdf}{[8,Lemma~\ref{metr:complete}]};
a modification of this lifting property defines NIP via existence of average/limits of types
of indiscernible sequences \href{https://mishap.sdf.org/yetanothernotanobfuscatedstudy.pdf}{[8,Lemma~\ref{nip:lift}]};
we do not pursue the analogy between Cauchy sequences and their limits in topology, and indiscernible sequences and their limit types in model theory.
The same endomorphism is used in a reformulation of non-dividing \href{https://mishap.sdf.org/yetanothernotanobfuscatedstudy.pdf}{[8,Lemma~\ref{non:div}]}.
A reformulation \href{https://mishap.sdf.org/yetanothernotanobfuscatedstudy.pdf}{[8,Lemma~\ref{OP}]} 
of NOP is obtained by ``transcribing'' a definition in a particularly mechanistic way reminiscent of 
the ``android'' in \href{mishap.sdf.org/by:gavrilovich-and-hasson/what:a-homotopy-theory-for-set-theory/gavrilovich-hasson-homotopy-approach-to-set-theory-ijm-pub.pdf}{[GH]}. The process of transcribing is sensitive
to the phrasing and details of the formulation being transcribed,
and therefore we obtain a different lifting property, 
in fact it is a lifting property on the right and not on the left
as for stability and NIP. 
In 
\href{https://mishap.sdf.org/yetanothernotanobfuscatedstudy.pdf}{[8,\S\ref{NSOP}]} 
we consider a simplification of the reformulation of OP and discuss its relationship with NSOP. 
We also include in \href{https://mishap.sdf.org/yetanothernotanobfuscatedstudy.pdf}{[8,Appendix 5]} 
an exposition of stability very similar to Lemma~\ref{main:eventually:stab:lift} 
but using slightly simpler definition of the filters.
In \href{https://mishap.sdf.org/yetanothernotanobfuscatedstudy.pdf}{[8,Appendix 3]} we sketch a number of 
examples of simplicial filters, in particular how to view a topological space as a simplicial filter.
In \href{https://mishap.sdf.org/yetanothernotanobfuscatedstudy.pdf}{[8,Appendix 6]} we 
use category-theoretic language to sketch our construction of simplicial filters associated with models. 
In \href{https://mishap.sdf.org/yetanothernotanobfuscatedstudy.pdf}{[8,Appendix 9]} we sketch
how to reformulate NTP appearing in the definition of a simple theory.

%
%
%

\subsubsection*{1.2. Further work} These observations show that 
our generalised Stone space
contain model theoretic information accessible by diagram chasing techniques, and, more generally, category theoretic
methods, and may bring an additional geometric intuition and vision to model theory, particularly to the combinatorial methods of 
``negative'' dividing lines and indiscernible sequences.

In \href{https://mishap.sdf.org/yetanothernotanobfuscatedstudy.pdf}{[8,Appendix~\ref{app:specs}]} we give a number of speculations elaborating our vision; here we mention general directions. 

%
Our reformulations show that several  properties (classes) of models 
can be defined very concisely in terms of iterated Quillen negation (orthogonals)
starting from an explicitly given morphism. Can the diagram chasing technique 
arising from such reformulations be of use in model theory,
say to shorten the exposition of some well-known arguments?
Can one define interesting dividing lines by taking iterated Quillen negation
of interesting examples or properties? 

%

Our category\footnote{We suggest to pronounce $\sFilth$ as $sF$                                                                       
it is visually similar to                                                                                                                                
$s\Phi$ standing for ``{\em s}implicial $\phi$ilters'',                                                                                                     
even though it is unrelated to the actual pronunciation of these symbols coming from the Amharic script.                                                    
}
 $\sFilth$ carries an intuition of point-set topology 
(\href{http://mishap.sdf.org/6a6ywke.pdf}{[6,\S3]}, \href{https://mishap.sdf.org/Skorokhod_Geometric_Realisation.pdf}{[7]}) 
which we  hope might guide us to a definition 
of a notion of the space of indiscernible sequences in a model 
or the space of maps between two models, and their connected components. 
 
Our category $\sFilth$ has two subcategories carrying a rich  homotopy theory: topological spaces and simplicial sets. 
This raises a naive hope---or rather a grand project---that methods of homotopy theory can be developed for $\sFilth$
and will provide meaningful model theoretic information if 
applied to generalised Stone spaces of models. 

\section{The category}

\subsection{\label{intro:filter:def}The category of filters}
We now introduce the main category. 
\subsubsection{The category of filters}
We slightly modify \href{http://mishap.sdf.org/tmp/Bourbaki_General_Topology.djvu#page=63}{[Bourbaki, I\S6.1, Def.I]}:
\noindent\newline\includegraphics[width=\linewidth]{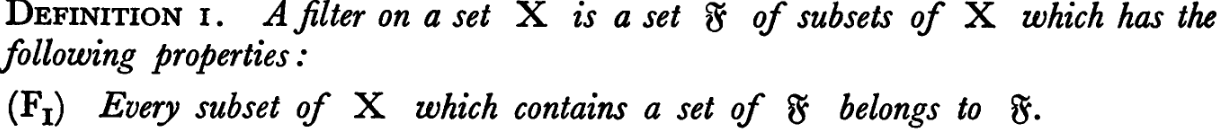}
\noindent\newline\includegraphics[width=0.8\linewidth]{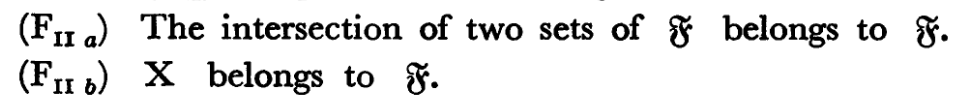}
%
\vskip3pt
Subsets in $\mathfrak F$ are called {\em neighbourhoods} or {\em $\mathfrak F$-big}.
We call $X$ {\em the underlying set} of filter $\mathfrak F$ and denote it by $X=|\mathfrak F|$.  

By abuse of language, often by a filter we mean a set together with a filter on it,
and by $X$ mean $\mathfrak F$. 

A {\em morphism of filters} is a mapping of underlying sets 
such that the preimage of a neighbourhood is necessarily a neighbourhood;
we call such maps of filters {\em continuous}. Note that $\mathfrak F\cup \{\emptyset\}$ is a topology on $X$, 
but this notion of continuity is stronger: the preimage of an open set is not allowed to 
be empty (unless $\emptyset$ is a neighbourhood). 

Let $\Filt$ denote the category of filters.

Unlike the definition of filter in \href{http://mishap.sdf.org/tmp/Bourbaki_General_Topology.djvu#page=63}{[Bourbaki, I\S6.1, Def.I]},%
\footnote{Our modification of the notion of  filter is used in the literature on formalisation
\href{https://home.in.tum.de/~immler/documents/hoelzl2013typeclasses.pdf}{[HIH13]},
\href{https://arxiv.org/abs/1910.12320}{[Formalising perfectoid spaces]},
where instead of $f:X\lra Y$ they say ``$f$ tends to filter $Y$ with respect to filter $X$''.}
we  do not require 
that $\emptyset\not\in\frak F$, i.e.~for us the set of all subsets of $X$ is a  filter. 
In particular it is possible that $X=\emptyset$ and $\mathfrak F=\{\emptyset\}$. 
We do so in order for the category  of filters to have  limits. 

\subsubsection{Filters: notation and intuition}\label{filth:notation}
We shall denote filters by $X,Y,...$, and neighbourhoods by $\varepsilon,\delta, ..$,
as this enables us to write in analogy with analysis that  
a map 
$f:|X|\lra |Y|$ of filters is continuous iff for each neighbourhood $\varepsilon\subset |Y|$ 
there is neighbourhood $\delta\subset |X|$ such that $f(\delta)\subset\varepsilon$. 

With the the notion of a filter  precision can be given
to the concept of sufficiently small error and it
enables one to give an exact meaning to 
the phrase ``whenever $x$ is sufficiently small, $x$ has the property $P\{x\}$'', 
by saying that there is a neighbourhood $\varepsilon\subset |X|$ such that
 $x$ has the property $P\{x\}$ whenever $x\in \varepsilon$. 
We may also {\em define} a neighbourhood to mean a set of points having some property 
we are interested in, and using the word ``neighbourhood'' in this sense 
brings about the intuition and terminology of the mathematical idea of neighbourhood in topology.
For example, it makes 
the language more expressive if we say ``$x\in |X|$ is $\varepsilon$-small'' to mean $x\in \varepsilon$,
i.e.~that $x$ has the property we are interested in.

This notation and intuition is demonstrated by the following example: 
a map $f:|M'|\lra |M''|$ of metric spaces $M',M''$ is uniformly continuous 
if it induces a continuous map from the filter of $\epsilon$-neighbourhoods of the main diagonal
on $M'\times M'$ to that on $M''\times M''$. 
The {\em filter of $\epsilon$-neighbourhoods of the main diagonal} on a metric space $M$ 
is defined as being generated by 
$$\{ (x,y)\in |M|\times |M| \,:\, \dist(x,y)<\epsilon\}$$
Indeed, being continuous means that 
for every $\varepsilon:= \{ (x,y)\in |M'|\times |M'| \,:\, \dist(x,y)<\epsilon\}$ there exists
$\updelta:=\{ (x,y)\in |M''|\times |M''| \,:\, \dist(x,y)<\delta\}$ such that $f(\updelta)\subset\varepsilon$. 
In more expressive terms we may say that $f$ is uniformly continuous 
if the 	``error'' $(f(x), f(y) ) $ is as small 
as we please whenever the ``error'' $(x,y)$ is small 
enough, or, more vaguely, 
 the  ``homotopy'' $f(x)\rightsquigarrow   f(y)  $ is as short 
as we please whenever the ``homotopy'' $ x\rightsquigarrow y$ is short 
enough.

Similarly, a map $f:|X'|\lra |X''|$ of topological spaces $X',X''$ is continuous
if it induces a continuous map from the filter of coverings
on $X'\times X'$ to that on $X''\times X''$. 
The {\em filter of coverings} on $|X|\times |X|$ consists of subsets of form 
$$\bigsqcup_{x\in X} \{x\}\times U_x$$
where $U_x\ni x$ is a (not necessarily open) 
neighbourhood of $x$, i.e.~$(U_x)_{x\in X}$ is a covering of $X$. 
\footnote{This intuition is the usual intuition of topology as described
in \href{http://mishap.sdf.org/tmp/Bourbaki_General_Topology.djvu}{[Bourbaki, Introduction]}, 
though use of simplicial techniques
makes our formalism somewhat more flexible then that of [Bourbaki], as we will see later.
For example, it allows to treat uniformly the topological and uniform structures:
The filter of coverings gives the topological structure on a set, 
and the filter of $\varepsilon$-neighbourhoods of the main diagonal
gives the uniform structure. 
We quote  \href{http://mishap.sdf.org/tmp/Bourbaki_General_Topology.djvu\#page=12}{[Bourbaki]}:
\bi\item 
(Introduction) To formulate the idea of neighbourhood we started from the vague 
concept of an element ``sufficiently near'' another element. Conversely, 
a topological structure now enables us to give precise meaning to the 
phrase "such and such a property holds for all points sufficiently near a": 
by definition this means that the set of points which have this property is 
a neighbourhood of a for the topological structure in question.
....
a topological structure on a set enables one 
to give an exact meaning to the phrase ``whenever $x$ is sufficiently 
near $a$, $x$ has the property $P(x)$''. But, apart from the situation 
in which a ``distance'' has been defined, it is not clear what meaning 
ought to be given to the phrase ``every pair of points $x,y$ which are suffi
ciently near each other has the property $P( x, y )$'', since a priori 
we have no means of comparing the neighbourhoods of two different 
points.
\item (I\S1.2) The everyday sense of the word "neighbourhood" is such that many
of the properties which involve the mathematical idea of neighbourhood
appear as the mathematical expression of intuitive properties; the choice
of this term thus has the advantage of making the language more expres
sive. 
\item (II\S2.1) In more expressive terms we may say that $f$ is uniformly continuous 
if $f(x)$ and $f(y)$ are as close to each other as we please whenever $x$ 
and $y$ are close enough.
\ei
}

\vskip6pt

\subsubsection{\label{filter:def}The category of simplicial filters}
Now we introduce the main object of concern of this paper.

Let $n_\leq$ for $n>0$ denote the finite linear order $1<2<...<n$ on the first $n$ natural numbers. 
Let $\Delta$ be the category of finite linear orders whose objects are finite linear orders,
and morphisms are non-decreasing maps, and $\Dop$ denote the opposite category of $\Delta$. 

Let  $\sFilth=Func(\Dop, \Filth)$ be 
the category of functors
from $\Dop$ 
to the category $\Filt$ of filters;  morphisms are natural transformations between functors.  
One may refer to an object of $\sFilth$
as either {\em a simplicial filter} 
or {\em a situs},
if one prefers a 
short name.

\subsubsection{Simplicial notation}\label{sfilth:notation}\label{sec:sFdefs}
We usually let  $X_\bullet, Y_\bullet$ denote simplicial filters, the subscript ${}_\bullet$
indicating it is a functor. We may write $X_\bullet:\sFilth$ to indicate that $X$ is an object of $\sFilth$. 
 $X_\bullet(n_\leq)\in\Ob\Filth$ shall denote the value of $X_\bullet$ at  $n_\leq\in \Ob \Dop$.


If we defined a construction associating an object of $\sFilth$ with a mathematical object of certain type 
 (a linear order, a model, a topological or metric space, ...), 
we usually let $X_\bullet, Y_\bullet,...$ 
denote the object of $\sFilth$ associated with $X,Y,..$;
a superscript may indicate the nature of the construction.

We view a weakly increasing sequence $1\leq i_1\leq ... \leq i_n\leq m$ as a monotone map $n_\leq\lra m_\leq$,
and denote by $[i_1\leq ... \leq i_n]:m_\leq\lra n_\leq $ the corresponding morphism of $\Dop$.
Because $X_\bullet$ is a functor, $[i_1\leq ... \leq i_n]$ induces 
a morphism $[i_1\leq ... \leq i_n]:X_\bullet(m_\leq)\lra X_\bullet(n_\leq)$ which we may also denote by $[i_1\leq ... \leq i_n]$.
For $x\in X_\bullet(m_\leq)$,  we denote the image of $x$ under this morphism
by $x[i_1\leq ... \leq i_n]\in X_\bullet(n_\leq)$.

For $X_\bullet:\sFilth$, elements of $X_\bullet(n_\leq)$ are called $(n-1)$-simplicies where $n-1$ is the {\em dimension}; 
to avoid confusion we may sometimes write $n_\leq$-simplex instead. An element of the form 
$x[i_1\leq ... \leq i_n]\in X_\bullet(n_\leq)$ is  called a {\em face} of simplex $x$.

We denote by $\homm {C} X Y:=\hommSets C X Y$ the set of morphisms from $X:C$ to $Y:C$ in a category $C$; we may omit $C$ when it is clear.
By $\hom - Y$ or $\homm C - Y$ we denote the functor $C^{\text{op}}\lra \Sets$, $X\longmapsto \homm C X Y$ 
 

\subsubsection{Simplicial filters: intuition}
Recall that the category of simplicial sets is the category of  functors $\sSets:=Func(\Dop,\Sets)$. 
The forgetful functor $|\cdot|:\Filth\lra\Sets$ taking a filter to its underlying set  induces 
a forgetful functor $|\cdot|:\sFilth\lra \sSets$, 
sending a simplicial filter to its  {\em underlying simplicial set}. 

A simplicial filter is a simplicial set equipped with filters, 
in the following precise sense. To give a simplicial filter  $\sFilth$ is to give  a simplicial set $X_\bullet:Func(\Dop,\Sets)$ 
and a filter $\mathfrak F_n$ on set $X(n_\leq)$ for each $n>0$ such that all the face maps $X(m_\leq)\lra X(n_\leq)$ 
are continuous with respect to these filters. 
The continuity condition means explicitly that for each $m,n>0$,
for each  weakly increasing sequence $1\leq i_1\leq ... \leq i_n\leq m$ 
for each neighbourhood $\varepsilon\in \mathfrak F_n$ 
$$
\left\{\, x\,:\, x[i_1\!\leq\! ...\! \leq i_n]\in \varepsilon  \right\}\,\in\, \mathfrak F_m 
$$
or, using notation analogous to mathematical analysis,
for each neighbourhood $\varepsilon\in \mathfrak F_n$ there is a neighbourhood $\updelta\in\mathfrak F_m$
such that for each $x\in \updelta$  $x[i_1\!\leq\! ...\! \leq i_n]\in \varepsilon$. 

Recall our intuition in \S\ref{filth:notation} that a filter is a notion of smallness. Hence, 
intuitively we think of a simplicial filter as a {\em simplicial set equipped with a notion of smallness}.

\subsection{Examples of simplicial filters and their morphisms.}

We give examples  of simplicial filters we use later.

In most of our examples of simplicial filters the underlying simplicial set is corepresented; here we explain what this means.
We also sketch how to embed into $\sFilth$ the category of simplicial sets and that of uniform structures;
we hope these examples will aid the intuition.

More examples are sketched in \href{https://mishap.sdf.org/yetanothernotanobfuscatedstudy.pdf}{[8,Appendix~\ref{ex:sfilth}]}, 
notably the full subcategory of $\sFilth$ of topological spaces. 
The reader may find it helpful to browse through to gain intuition and the the wider context.

\subsubsection{Discrete and antidiscrete} 
The set of subsets consisting of $X$ alone
is a  filter on $X$ called the {\em antidiscrete} filter, and there is a functor 
$\cdot^\text{antidiscrete}: \Sets\lra\Filth$, $X\longmapsto X^\text{antidiscrete}$,
sending a set to itself equipped with antidiscrete filter $\{X\}$.
For us the set of all subsets of $X$ is also a filter which we call {\em discrete}.  

The functor $\cdot^\text{antidiscrete}:\Sets\lra \Filth$ induces a fully faithful embedding $\cdot^\text{antidiscrete}:\sSets\lra \sFilth$,
and in this way  any simplicial set is also a simplicial filter.


\subsubsection{Corepresented simplicial sets}\label{maps:coreps} 
The underlying simplicial sets of most of the examples will be variations of the following well-known construction
in category theory. A reader less familiar with category theory may wish to read first \S\ref{coreps:set} 
where we give this construction in the set-theoretic language.  

Let $C$ be a category. To each object $Y\in \Ob C$
corresponds a functor $h_Y : X \longmapsto \homm C {X}{Y}$
sending each object $X\in \Ob C$ into the set of  morphisms from $X$ to $Y$.
 A functor $h_Y:C\lra \Sets$ of this form is called 
{\em corepresented by $Y$}. 
Yoneda Lemma implies that this correspondence $Y\longmapsto h_Y$  defines  a fully faithful embedding 
$C\lra Func(C^\text{op},\Sets)$. Indeed, a natural transformation 
$\eta: h_{Y'}\lra h_{Y''}$ is fully determined by morphism $ \eta_{Y'}(id_{Y'})\in h_{Y''}=\hom{Y'}{Y''}$:
for arbitrary $X$, 
$\eta_{X}:\hom {X}{Y'} \lra \hom {X} {Y''}$ is given by $ f\longmapsto f\circ \eta_{Y'}(id_{Y'})$.

A {\em simplicial set $I^\leq_\bullet:\Dop\lra\Sets$ 
co-represented by} a preorder  $I^\leq $ is 
the functor sending each finite linear order $n_\leq$ into the set of monotone maps from $n_\leq$ to $I^\leq$:
$$
n_\leq \longmapsto \homm{\text{preorders}} {n_\leq} {I^\leq} = \left\{\,(t_1,...,t_n)\in I^n\,:\, t_1\leq ...\leq t_n \,\right\}
$$
A map $[i_1\leq ...\leq i_n]:n_\leq\lra m_\leq$ induces by composition  a map 
$$\homm{\text{preorders}} {m_\leq} {I^\leq} \lra \homm{\text{preorders}} {n_\leq} {I^\leq}$$
$$ (t_1\leq...\leq t_m)  \longmapsto (t_{i_1}\leq...\leq t_{i_n})$$

%

A monotone  map $f:I^\leq \lra J^\leq$ of preorders  induces a natural transformation of functors $f_\bullet:I^\leq_\bullet\lra J^\leq_\bullet$:
for each $n>0$, a weakly increasing sequence $(x_1\leq ...\leq x_n)\in I^\leq(n_\leq)$ goes into 
a weakly increasing sequence  $(f(x_1)\leq...\leq f(x_n))\in J^\leq_\bullet(n_\leq)$. 
Moreover, every natural transformation $f_\bullet:I^\leq_\bullet\lra J^\leq_\bullet$ is necessarily of this  form,
as the following easy argument shows. Let $(y_1, ..., y_n)=f_n(x_1,..,x_n)$; by functoriality using maps $[i]:1\lra n, 1\mapsto i$ 
we know that $y_i=(y_1,..,y_n)[i]=f_n(x_1,..,x_n)[i]=f_1(x_i)$. In a more geometric language, we may say 
that we used that each ``simplex'' $(y_1,..,y_n)\in J^\leq(n_\leq)$ is uniquely determined by its ``$0$-dimensional faces'' $y_1,...,y_n\in J^\leq$.


In the category-theoretic language, the facts above are expressed by saying that 
preorders with monotone maps form  a full subcategory of $\sSets$ and therefore of $\sFilth$.

An important special case is when the preorder is an equivalence relation with one equivalence class, 
i.e.~a set with no additional structure. In that case we call the functor just defined 
{\em corepresented by the set $I$}, denote it by $|I|_\bullet:\Dop\lra\Sets$.
This gives a fully faithful embedding 
$\Sets\lra \sSets$.

\subsubsection{Metric spaces and the filter of uniform neighbourhoods of the main diagonal}\label{intro:def:sMetr}\label{def:filt:metr}
Let $M$ be a metric space. 
Consider the {\em simplicial set $|M|_\bullet:\Dop\lra\Sets$ 
corepresented by} the set $|M|$ of points of $M$ defined above, i.e. ~the functor
$$
n_\leq \longmapsto \homm{\text{Sets}} {n} {|M|} = \left\{\,(t_1,...,t_n)\in |M|^n\right\}=|M|^n
$$
Now equip  $|M|_\bullet(n_\leq)=|M|^n$ with the ``filter of uniform neighbourhoods of the main diagonal''
generated by 
$$\left\{ (x_1,..,x_n)\in |M|^n \,:\, \dist(x_i,x_j)<\epsilon\,\,\forall 1\leq i,j\leq n\,\right\}$$
 as $\epsilon$ ranges over $\Bbb R_{>0}$.
A map $f:|M'|\lra |M''|$ is uniformly continuous iff it induces a natural transformation
$f_\bullet:M'_\bullet\lra M''_\bullet$ of functors $\Dop\lra \Filth$. For $n=2$ this is checked in \S\ref{filth:notation}, 
and for $n>2$ the argument is the same.  

Thus we see that the category of metric spaces and uniformly continuous maps is a fully faithful subcategory of $\sFilth$. 

Though not used, we present the following reformulation of the definition of uniform structure 
\href{http://mishap.sdf.org/tmp/Bourbaki_General_Topology.djvu#page=15}{[Bourbaki, II\S I.1,Def.I]} 
to illuminate how translation to the language of $\sFilth$ works. Below we put in ``()'' the properties as 
formulated in [Bourbaki]; the notation is explained in the proof.

\begin{lemm}[\href{http://mishap.sdf.org/tmp/Bourbaki_General_Topology.djvu}{[Bourbaki, II\S I.1,Def.I]}]
A uniform structure (or uniformity) on a set $X$ is a structure 
given by a filter $\mathfrak U$ of subsets of $X \times  X$%
such that there is an object $X_\bullet:\Dop\lra \Filth$ of $\sFilth$
satisfying
\bi
\item[(U${}_\text{0}$)] Its underlying simplicial set $|X_\bullet|=|X|_\bullet$ is corepresented by the set $|X|$ of points of $X$. 
\item[(U${}_\text{I}$)]  (``Every set belonging to $\mathfrak U$ contains the diagonal $\Delta$.'')\newline 
The filter on $X_\bullet(1_\leq)$ is antidiscrete, i.e.~is $\{|X_\bullet(1_\leq)|\}$.
\item[(U${}_\text{II}$)] (``If $V \in \mathfrak U$ then $V^{-1} \in \mathfrak U$.'') \newline
The functor $X_\bullet$ factors as $$X_\bullet:\Dop\lra \text{FiniteSets}^\text{op}\lra \Filth$$ 
\item[(U${}_\text{III}$)] (``For each $V \in \mathfrak U$ there exists $W \in \mathfrak U$ such that $W\circ W \subset V$.'')\newline
 for $n>2$   $|X|_\bullet(n_\leq)=|X|^n$ is equipped with the coarsest filter such that the maps
$X^n\lra X\times X, (x_1,..,x_n)\mapsto (x_i,x_{i+1})$, $0<i<n$, of filters are continuous.
\ei
\end{lemm}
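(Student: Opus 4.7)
The plan is to establish a bijection between uniform structures $\mathfrak U$ on $X$ in Bourbaki's sense and objects $X_\bullet$ of $\sFilth$ satisfying (U$_\text{0}$)--(U$_\text{III}$), by setting $\mathfrak U := X_\bullet(2_\leq)$ in one direction and reconstructing $X_\bullet$ from $\mathfrak U$ in the other. Condition (U$_\text{0}$) pins down $X_\bullet(n_\leq) = |X|^n$ as a simplicial set, and it forces every face/degeneracy map to be the tuple-reshuffling $(x_1, \ldots, x_m) \mapsto (x_{i_1}, \ldots, x_{i_n})$ induced by $[i_1 \leq \ldots \leq i_n]: m_\leq \to n_\leq$. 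The only new data is then the filter on each $|X|^n$, and what the simplicial filter structure demands is that each reshuffling map be continuous.

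First I would match (U$_\text{I}$) to Bourbaki's diagonal axiom. The morphism $[1 \leq 1]: 1_\leq \to 2_\leq$ of $\Dop$ induces the diagonal $|X| \to |X| \times |X|$, $x \mapsto (x, x)$; continuity with respect to the antidiscrete filter on $X_\bullet(1_\leq) = |X|$ and $\mathfrak U$ on $X_\bullet(2_\leq)$ says exactly that $\{x : (x, x) \in \varepsilon\} = |X|$ for every $\varepsilon \in \mathfrak U$, i.e.\ $\Delta \subset \varepsilon$. Next, the factorisation (U$_\text{II}$) through $\text{FiniteSets}^{\text{op}}$ makes the unique non-monotone bijection $2 \to 2$ a continuous endomorphism of $|X|^2$, and this endomorphism is the swap $(x, y) \mapsto (y, x)$; continuity here is precisely $V \in \mathfrak U \Rightarrow V^{-1} \in \mathfrak U$.

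For the composition axiom I would focus on dimension three. By (U$_\text{III}$) the filter on $|X|^3$ is generated by sets $\{(x, y, z) : (x, y) \in W,\, (y, z) \in W\}$, as $W$ ranges in $\mathfrak U$. But the non-consecutive projection $p_{13}: |X|^3 \to |X|^2$ coming from $[1 \leq 3]: 2_\leq \to 3_\leq$ must also be continuous, so for each $V \in \mathfrak U$ one needs $W \in \mathfrak U$ with $\{(x, y, z) : (x, y), (y, z) \in W\} \subset p_{13}^{-1}(V)$; this is precisely $W \circ W \subset V$.

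For the converse direction, given $\mathfrak U$ satisfying Bourbaki's three axioms, I would define $X_\bullet$ by (U$_\text{0}$) with the filters prescribed by (U$_\text{I}$)--(U$_\text{III}$) and verify continuity of every morphism in $\Dop$ (and its extension to $\text{FiniteSets}^{\text{op}}$). This is the main technical obstacle: every such map decomposes into a single-coordinate projection to $|X|$ (automatically continuous, since the target filter is antidiscrete), a permutation of coordinates (covered by iterating symmetry (U$_\text{II}$)), a degeneracy repeating coordinates (covered by (U$_\text{I}$), since any $W \in \mathfrak U$ contains $(x,x)$ so the induced condition on repeated coordinates is vacuous), and a projection onto a possibly non-consecutive pair $(x_i, x_j)$, which by $|j - i| - 1$ iterations of Bourbaki's third axiom yields $W \in \mathfrak U$ with $W^{j-i} \subset V$. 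The conceptual content is the clean matching of (U$_\text{I}$)--(U$_\text{III}$) with Bourbaki's axioms in dimensions $\leq 3$; the technical content is this case analysis and induction ensuring consistency across all simplicial maps in all dimensions.
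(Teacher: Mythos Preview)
Your proposal is correct and follows essentially the same route as the paper's proof sketch: matching (U$_\text{I}$) to the diagonal axiom via continuity of $x\mapsto(x,x)$, (U$_\text{II}$) to symmetry via the swap map coming from the factorisation through $\text{FiniteSets}^{\text{op}}$, and (U$_\text{III}$) to Bourbaki's composition axiom via continuity of the non-consecutive projection $[1,3]$ against the coarsest filter on $|X|^3$. The only cosmetic difference is that the paper generates that coarsest filter using two entourages $W',W''$ and then passes to $W=W'\cap W''$, whereas you use a single $W$ throughout; and your outline of the converse direction (decomposing an arbitrary simplicial map and iterating the composition axiom) spells out what the paper leaves implicit in calling its argument a sketch.
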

\noindent\newline
\includegraphics[width=0.5\linewidth]{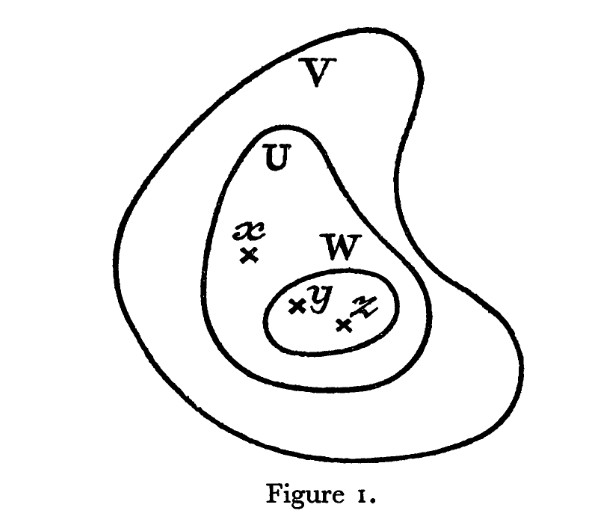}
\includegraphics[width=0.5\linewidth]{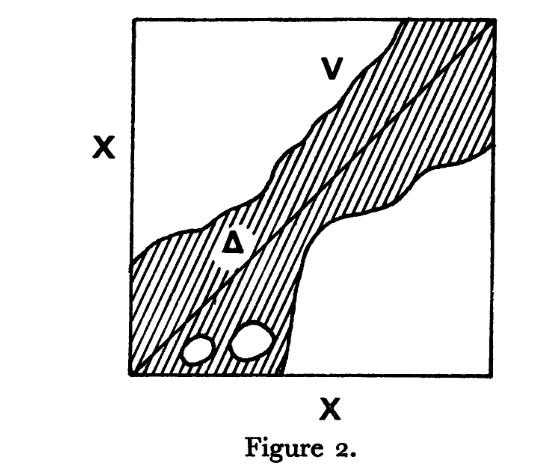}
\begin{center}Figure 1 \href{http://mishap.sdf.org/tmp/Bourbaki_General_Topology.djvu#page=15}{[Bourbaki, I\S I.1,Def.I]} 
illustrates either of the following equivalent statements:
\bi\item[]\noindent\includegraphics[width=1.08\linewidth]{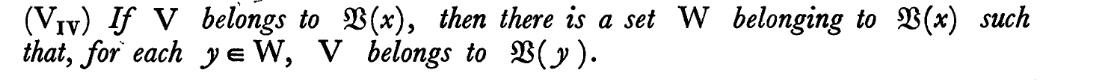}
\vskip-2pt
where $\mathfrak B(x)$ denotes the set of neighbourhoods of point $x\in X$.
\item[] {(V${}_\text{IV}$)} The map $[1,3]:X\times X\times X\lra X\times X$ is continuous when $X\times X$ is equipped with the filter of coverings
and $X\times X\times X$ is equipped with the coarsest filter such that both maps $[1,2],[2,3]:X\times X\times X\lra X\times X$ are continuous.
To see the equivalence to the previous item, consider that the preimage of the set $|V|\times |V|\subset |X|\times |X|$ contains $\{x\}\times W\times V\subset |X|\times |X|\times |X|$. 
\ei
Figure 2 \href{http://mishap.sdf.org/tmp/Bourbaki_General_Topology.djvu#page=15}{[Bourbaki, II\S I.1,Def.I]}
illustrates the filter  $\mathfrak U$ of subsets of $X \times  X$ above. 
 \end{center}
\begin{proof}[Proof(sketch)] 
(U${}_\text{I}$): 
Consider the map $|X|\lra |X|\times |X|,\, x\longmapsto (x,x)$ induced by the map $[1,1]:2_\leq\lra 1_\leq$. 
Being continuous with respect to the antidiscrete filter  $\{|X|\}$ and the filter $\mathfrak U$ on $|X|\times |X|$
means exactly every set belonging to $\mathfrak U$ contains the diagonal $d:=\{(x,x):x\in X\}$.                                                         

(U${}_\text{II}$): As the functor factors via $ \text{FiniteSets}^\text{op}$,
the permutation $|X|\times |X|\lra |X|\times |X|,\, (x,y)\longmapsto (y,x)$ induces a continuous map of $\mathfrak U$ into itself.
This means exactly that if $V \in \mathfrak U$ then $V^{-1} \in \mathfrak U$ where $V^{-1}:=\{(y,x)\,:\,(x,y)\in U\}$.
(U${}_\text{III}$): 
Recall $W'\circ W'' := \{(x_1,x_3): \text{ exists } x_2 \text{ such that } (x_1,x_2)\in W' \text { and } (x_2,x_3)\in W''\}$.
The coarsest filter on $|X|_\bullet(3_\leq)=|X|^3$ such that the two maps                                                               
$X^3\lra X\times X, (x_1,x_2,x_3)\mapsto (x_1,x_{2}),\,(x_1,x_2,x_3)\mapsto (x_2,x_3)$ of filters are continuous,
is explicitly described as being generated by 
$$ \{(x_1,x_2,x_3):  (x_1,x_2)\in W' \text { and } (x_2,x_3)\in W''\}, \ \ \ \text{ for }  W',W''\in\mathfrak U$$
Continuity of the map $[1,3]:|X|^3\lra |X|^2$ means exactly that
for any $V\in \mathfrak U$                                                                                                        
 there exists $W',W'' \in \mathfrak U$ such that $W'\circ W'' \subset V$. Now take $W:=W'\cap W''$.
\end{proof}

\subsubsection{An explicit set-theoretic description of a simplicial filter on a simplicial set corepresented by a preorder}\label{coreps:set}
A class of examples of objects of $\sFilth$ associated with preorders can be described explicitly as follows.

Let $(I,\leq)$ be a preorder, i.e.~$\leq$ is a binary transitive relation on $I$. 
We will be interested in the examples where 
it is a linear order 
and where it is an equivalence relation with only one equivalence class, i.e.~a set with no further structure.

For each $0<n\in\omega$, let $\mathfrak F_n$ be a filter on 
$$I_{n}:=\left\{ (t_1, ..., t_n)\in |I|^n \, :\,  t_1\leq ...\leq t_n\right\}$$
such that 
for each $m,n>0$, weakly increasing sequence $1\leq i_1\leq ... \leq i_n\leq m$, 
for each neighbourhood $\varepsilon\in \mathfrak F_n$ 
$$
\left\{ (t_1,...,t_m)\in |I|^m  :\, (t_{i_1},...,t_{i_n}) \in \varepsilon  \right\}\,\in\, \mathfrak F_m 
$$

Such a sequence 
of filters gives rise to a simplicial filter $I^\leq_\bullet:\Dop \lra \Filth$ defined as follows:
\bi\item
$I^\leq_\bullet(n_\leq):=\mathfrak F_n$
\item for each weakly increasing sequence $1\leq i_1\leq ...\leq i_n\leq m$, 
the continuous map of filters $[i_1\leq ...\leq i_n]:I^\leq_\bullet(m_\leq)\lra I^\leq_\bullet(n_\leq)$
is given by 
$$  (t_1,...,t_m) \longmapsto  (t_{i_1},...,t_{i_n}) $$
\ei
The condition on the filters $\mathfrak F_n$ means exactly that these maps are continuous.

A verification shows that these maps  commute as required by functoriality:
explicitly, this requirement states that
\bi\item 
the composition $I_l  \xra {[j_1\leq ...\leq j_m]} I_m \xra {[i_1\leq ...\leq i_n]} I_n $ is equal to 
$ I_l \xra {[j_{i_1}\leq ...\leq j_{i_n}]} I_n$ as shown
$$ 
\ \ \xymatrix@C+=2.5cm{ I_l \ar@/__/[r]^-{ {[j_1\leq ...\leq j_m]}}  \ar@/_2pc/[rr]_{[j_{i_1}\leq ...\leq j_{i_n}]} &  {I_m}
 \ar[r]^{[i_1\leq ...\leq i_n]} & I_n }
$$
 for each $l,m,n>0$, each  weakly increasing sequences $1\leq j_1\leq ...\leq j_m\leq l$, $1\leq i_1\leq ...\leq i_n\leq m$,
\ei

Let $I'^\leq_\bullet:\Dop \lra \Filth$ be a simplicial filter associated with
another preorder $(I',\leq)$ and a sequence $\FFF'_n$, $n>0$ of filters.

A verification shows that a monotone map $f:I\lra I'$ induces a morphism $I^\leq_\bullet \lra I'^\leq_\bullet$ iff for every $n$, 
every 
$\varepsilon\in \FFF_n$ there is $\delta\in \FFF'_n$  such that $f(\delta)\subset \varepsilon$,
and that  each such $\sFilth$-morphism is induced by a unique such map.

\subsubsection{The coarsest and the finest simplicial filter induced by a filter} 
Let $\mathfrak F$ be a filter on $I$. We can define the coarsest and the finest such sequence such that $\FFF_1=\FFF$
as follows:
\bi
\item[$(C_1)$] $ \mathfrak F_n $ is the coarsest filter such that all maps $[i]:\mathfrak F_n\lra \mathfrak F_1$, $1\leq i\leq n$ are continuous. 
Explicitly, $ \mathfrak F_n $ is generated by $ \left\{  I_n \cap \varepsilon^n : \varepsilon\in \mathfrak F_1 \right\}$, i.e.~by       
$$
\left\{\,\left\{ (t_1,...,t_n)\in I_n  : t_1,...,t_n\in \varepsilon \right\} \,:\,   \varepsilon\in \mathfrak F_1 \right\}$$
\item[$(F_1)$]  $ \mathfrak F_n $ is the finest filter such that the map $[1\leq ... \leq 1]:\FFF_1\lra \FFF_n$ is continuous, i.e.~is 
generated by 
$$\left\{\,\left\{ (t,...,t)\in I_n  : t\in \varepsilon \right\} \,:\,   \varepsilon\in \mathfrak F_1 \right\}$$ 
\ei

Let $I^{\leq,\FFF}_\bullet:\sFilth$ be the object of $\sFilth$ defined in $(C_1)$. 
Later we will use it when $I$ is a linear order and $\FFF$ is the filter of final segments.

Let $I'$ be another preorder, and let $\FFF'$ be a filter on $I'$.
A verification shows that a monotone map $I\lra I'$ continuous with respect to filters $\FFF$ and $\FFF'$ 
induces an $\sFilth$-morphism  $I^{\leq,\FFF}_\bullet\lra I'^{\leq,\FFF'}_\bullet$,
and each such $\sFilth$-morphism is induced by a unique such map.

Let $|I|^{\FFF}_\bullet:\sFilth$ denote  the object of $\sFilth$ defined in $(C_1)$
when the preorder on $I$ is the equivalence relation with only one equivalence class, 
and $\FFF$ is a filter on $I$. Note that this defines a functor $\Filth\lra\sFilth$.

\subsubsection{Topological and uniform structures as simplicial filters} 
More generally, let $\mathfrak F$ be a filter on $I_m$ for some $m>0$. 
We can define the coarsest and the finest such sequence such that $\FFF_m=\FFF$
as follows:
\bi
\item $ \mathfrak F_n $ is the coarsest filter such that all the face maps 
$[i_1\leq ...\leq i_m]:\mathfrak F_n\lra \mathfrak F_m$, $1\leq i_1\leq ...\leq i_m\leq n$ are continuous. 
Explicitly, $ \mathfrak F_n $ is generated by 
$$\left\{\, \left\{  x\in I_n : x[i_1\leq ...\leq i_m]\in \varepsilon\right\} : \varepsilon \in \mathfrak F_m,\,  1\leq i_1\leq ...\leq i_m\leq n \right\}$$       
\item  $ \mathfrak F_n $ is the finest filter such that the map $[i_1\leq ... \leq i_n]:\FFF_m\lra \FFF_n$, 
 $1\leq i_1\leq ...\leq i_n\leq m$  is continuous, i.e.~is 
generated by  
$$\left\{\,\left\{ (t_{i_1},...,t_{i_n})\in I_n  : (t_1,..,t_m)\in \varepsilon \right\} \,:\,   \varepsilon\in \mathfrak F_m, \,1\leq i_1\leq ...\leq i_n\leq m \right\}$$ 
\ei

Let $\FFF'_n$, $n>0$ be the sequence of coarsest filters associated with a filter $\FFF'$ on $I'_m$ for some preorder $I'$. 
A monotone map $f:I\lra I'$ induces a map $f_n:I_n\lra I'_n$, and this map is continuous with respect to $\FFF_n$ and $\FFF'_n$
iff $f_m:I_m\lra I'_m$ is continuous with respect to $\FFF_m=\FFF$ and $\FFF'_m=\FFF'$.
Indeed, as $\FFF'_n$ is the coarsest filter, we only need to check that the preimages in $I_n$ 
of the preimages in $I'_n$ of neighbourhoods in $I'_m$ are neighbourhoods, and this follows from commutativity
as they contain the preimages in $I_n$ of the preimages under $f$ of the neighbourhoods in $I'_m$.

Take the preorder $I$ be the set of points of a topological space $X$ equipped with the equivalence relation with only one equivalence class,
and take $\FFF$ to be the filter of coverings on $|X|\times |X|$. 
 This defines an object of $\sFilth$ corresponding to a topological space, which we will denote $X_\bullet$.
In \S\ref{filth:notation} we observed 
a map $f:|X'|\lra |X''|$ of topological spaces $X',X''$ is continuous
iff it induces a continuous map from the filter of coverings
on $X'\times X'$ to that on $X''\times X''$. A verification shows this implies that 
$f:|X'|\lra |X''|$ is continuous iff it induces an $\sFilth$-morphism $X_\bullet\lra X'_\bullet$,
and each such morphism is induced by a unique continuous map. 
In the language of category theory, this is expressed by saying that 
we just constructed a fully faithful embedding of the category $\Topp$ of topological spaces into the category of $\sFilth$.

Similarly,  we can take $\FFF$ to be the filter of $\epsilon$-neighbourhoods of the main diagonal on $|M|\times |M|$,
for a metric space $M$.
 This defines an object of $\sFilth$ corresponding to a metric space, which we will denote $M_\bullet$,
 a fully faithful embedding of the category of metric spaces and uniformly continuous maps into the category of $\sFilth$.

\section{Model theory} 

We now proceed to reformulate several notions in model theory in the language of $\sFilth$.

\subsection{Shelah representation and morphisms of models}

This subsection 
aims to explain in terms of  model theory
the meaning of $\sFilth$-morphism between models. Results of this section are not used elsewhere.

We show how to formalise in $\sFilth$ an intuition expressed by \href{http://arxiv.org/abs/1412.0421v2}{[Sh:1043]}:
\noindent\newline\includegraphics[width=\linewidth]{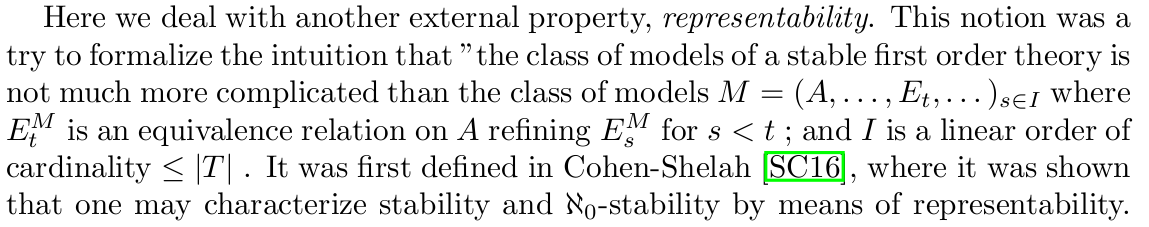}

In \S\ref{she:rep}  we formulate in model theoretic language Proposition~\ref{conj:reps} 
formalising this intuition in a literal way. In \S\ref{cat:reps} we explain that 
 Proposition~\ref{conj:reps} is implicitly talking about morphisms in $\sFilth$, and 
Corollary~\ref{cor:reps} is a reformulation of  Proposition~\ref{conj:reps} in the language of category theory.
\def\III{{\bf I}}
\subsubsection{Shelah's representation}\label{she:rep} 
Recall that 
\href{http://mishap.sdf.org/Shelah_et_al-2016-Mathematical_Logic_Quarterly.pdf}{[CoSh:919, Definition 2.1,p.144/5]} 
says that {\em a structure $\III$ represents a model  $M$} (with the identity function $\id:|M|\lra |\III|$) 
iff\def\qftp{\text{qftp}} 
$$\qftp_\III(a_1,...,a_n)=\qftp_\III(b_1,..,b_n) \implies \tp_M(a_1,..,a_n)=\tp_M(b_1,..,b_n)$$
for any finite tuples $a_1,...,a_n,b_1,..,b_n\in  |\III|= |M|$

We say that  {\em a structure $\III$ EM${}^\infty$-represents a model  $M$} (with the identity function $\id:|M|\lra |\III|$)
iff   $|I|=|M|$ and                                                                                                                               
\bi\item
each 
quantifier-free indiscernible  sequence  
in $\III$ (over the empty set) 
is necessarily also indiscernible in $M$ (over the empty set)
\ei

We also define the corresponding local notion: we 
say that  {\em a structure $\III$ EM-represents a model  $M$} (with the identity function $\id:|M|\lra |\III|$\footnote{We follow Shelah here; 
in our category-theoretic approach it is more natural to consider an arbitrary function $f:|I|\lra |M|$ in the {\em opposite} direction,
and require that it preserves the indiscernible sequences in the sense that 
the image of each quantifier-free indiscernible in $\III$ is indiscernible in $M$.})
iff $|I|=|M|$ and for each finite set $\Upsilon$ of formulas in the language of $M$
there exists a finite set $\Delta$ of quantifier-free formulas in the language of $\III$  such that
\bi
\item each 
$\Delta$-indiscernible sequence in $\III$ is necessarily
also $\Upsilon$-indiscernible in $M$
\ei

Note that EM${}^\infty$-representation is the weakest notion of the three, i.e.~$\III$ EM${}^\infty$-represents a model  $M$ 
is implied by that $\III$ either represents or EM-represents a model $M$.

\begin{propo}\label{conj:reps}
A theory $T$ is stable iff there is $\kappa$ such that each model of $T$ 
is EM${}^\infty$-represented by a structure  with at most $\kappa$ equivalence relations (and nothing else).

Equivalently, a theory $T$ is stable iff there is $\kappa$ such that for each model $M$ of $T$
there is a structure $\III$ on the same domain, $|\III|=|M|$,  with at most $\kappa$ equivalence relations (and nothing else),
such that
\bi
\item  each quantifier-free 
indiscernible sequence in $\III$ is necessarily
also indiscernible in $M$
\ei
\end{propo}
\begin{proof}[Proof (sketch)] 

$\Leftarrow:$ Let $M$ be a model of $T$ and let  $\III$ be a structure which EM${}^\infty$-represents $M$. 
A long enough sequence indiscernible in a model $M$ of $T$ has an infinite subsequence quantifier-free indiscernible in $\III$, 
as the number of types in $\III$ is bounded. 
In $\III$, a  quantifier-free indiscernible sequence is necessarily order-indiscernible,
and therefore order-indiscernible in $M$, because EM${}^\infty$-representation preserves indiscernible sequences. 
Hence, in $M$ every long enough indiscernible sequence has an infinite order-indiscernible subsequence, and hence
is order-indiscernible itself. Hence, any saturated enough model of $T$ is stable, and therefore $T$ is  stable.

$\Rightarrow$:
We now use \href{http://mishap.sdf.org/Shelah_et_al-2016-Mathematical_Logic_Quarterly.pdf}{[CoSh:919, Theorem 3.1(7)]}:  
\begin{quote}
 A theory $T$ is stable iff there is $\kappa$ such that for each model $M$ of $T$ 
there is a structure $\III$ with at most $\kappa$ unary functions (and equality, and nothing else)
which represents $M$. 
\end{quote}

Our argument is based on the following easy lemma.

\begin{lemm}\label{lem:equiv} 
In a theory 
in a language consisting only of equality and unary functions, 
which we assume closed under composition, the quantifier-type 
type of an indiscernible sequence of $n\geq 3$ elements
is isolated, among types of indiscernible sequences,  by 
a formula of the form  
$$
\bigwedge_{\substack{1\leq i\leq n \\ (f,g)\in F_1}}f(x_i)=g(x_i) 
\,\,\,\& \bigwedge_{\substack{1\leq i\leq n \\ (f,g)\in F_2}} f(x_i)\neq g(x_i)
\,\,\,\& \bigwedge_{\substack{ i <j \\ f\in F_3}}f(x_i)=f(x_j)
\,\,\,\& \bigwedge_{\substack{i<j\\ f\in F_4}}f(x_i)\neq f(x_j)
$$
for some sets $F_1,F_2$ of pairs of unary functions, and some sets $F_3,F_4$  of unary functions. 
\end{lemm}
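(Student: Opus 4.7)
The plan is to reduce the quantifier-free type of an indiscernible sequence to the three kinds of simple atomic formulas appearing in the displayed conjunction, with $n \geq 3$ entering only at the final step. Since the language contains only equality and unary functions and is closed under composition, every atomic formula in $x_1, \ldots, x_n$ has the form $f(x_i) = g(x_j)$ for single function symbols $f, g$. Thus the quantifier-free type of $(a_1, \ldots, a_n)$ is determined by the truth values of all such formulas, and I would split them into three kinds: (a) $i = j$; (b) $i \neq j$ and $f = g$; (c) $i \neq j$ and $f \neq g$.

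Next, set $F_1 := \{(f,g) : f(a_1) = g(a_1)\}$ and $F_2$ its complement among ordered pairs of unary functions, and $F_3 := \{f : f(a_1) = f(a_2)\}$ and $F_4$ its complement. By order-indiscernibility these definitions do not depend on the chosen indices, so $(a_1, \ldots, a_n)$ visibly satisfies the displayed formula, and atomic formulas of types (a) and (b) are manifestly read off from these four sets.

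The crux is case (c), which is exactly where $n \geq 3$ is used. Suppose $f(a_i) = g(a_j)$ with $i < j$ and $f \neq g$, and pick a third index $k$ distinct from $i, j$ (possible since $n \geq 3$). Applying order-indiscernibility to $(a_i, a_j)$ together with a suitable increasing pair involving $a_k$ forces $g(a_p) = g(a_q)$ for some $p \neq q$, hence $g \in F_3$ and $g$ is constant on the sequence; a symmetric argument gives $f \in F_3$. Then $f(a_1) = g(a_2) = g(a_1)$, so $(f,g) \in F_1$. The case $i > j$ reduces to $i < j$ by symmetry of equality. Conversely, if $f, g \in F_3$ and $(f,g) \in F_1$, then $f(a_i) = f(a_1) = g(a_1) = g(a_j)$ for all $i, j$. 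Hence on the class of indiscernible sequences the four sets determine the truth value of every atomic formula, proving the asserted isolation. The main obstacle I anticipate is the bookkeeping in this last step: arranging $k$ and reordering indices so that order-indiscernibility applies to the correct pairs, depending on whether $k < i$, $i < k < j$, or $j < k$; here $n \geq 3$ is genuinely needed, since for $n = 2$ the auxiliary index is unavailable and mixed atomic formulas cannot in general be reduced to the simple ones.
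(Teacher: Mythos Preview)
Your proposal is correct and follows essentially the same approach as the paper. The paper's proof is the one-paragraph version of your case (c): assuming $f(a_1)=g(a_2)$, indiscernibility over the pairs $(1,2),(1,3),(2,3)$ gives $f(a_1)=g(a_3)$ and $f(a_2)=g(a_3)$, whence $f(a_1)=f(a_2)=g(a_2)=g(a_3)$; so the mixed atomic formula is equivalent, on indiscernible sequences, to the conjunction of $f(x_1)=f(x_2)$, $f(x_2)=g(x_2)$, $g(x_2)=g(x_3)$. Your anticipated bookkeeping obstacle (the position of $k$ relative to $i,j$) is sidestepped in the paper by simply taking the three indices to be $1,2,3$ from the outset, which indiscernibility permits.
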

\begin{proof} Indeed, let $f(x_1)=g(x_2)$ be in the quantifier-free type of an indiscernible sequence $(a_1,a_2,a_3)$. 
Then so are  $f(x_1)=g(x_3)$, $f(x_2)=g(x_3)$, and  therefore $f(x_1)=f(x_2)=g(x_2)=g(x_3)$, which is equivalent to 
the conjunction of $f(x_1)=f(x_2)$, $f(x_2)=g(x_2)$, and $g(x_2)=g(x_3)$ of the required form, 
and implies the formula  $f(x_1)=g(x_2)$ we started with.
\end{proof}

Note that in the formula above we omit atomic formulas $f(x_i)=g(x_j)$ for $i\neq j$, $f\neq g$,
that  are not equivalence relations.

Let $M$ be a model of $T$, and let $\III$ represent $M$ as in Theorem 3.1(7); assume that the unary functions of $\III$ 
are closed under composition. Consider the reduct $\III'$ of $\III$ in the language containing the equivalence relations
$f(x)=f(y)$ and unary predicates $f(x)=g(y)$ where $f,g$ are functions in the language of $\III$.  Lemma~\ref{lem:equiv}
implies that $\III'$ EM${}^\infty$-represents $M$. \end{proof}

\begin{rema}
\href{http://arxiv.org/abs/1412.0421v2}{[Sh:1043, Theorem 2.1(6)]} gives
a similar characterisation of superstable theories in terms of  
representation by the class  $Ex_{\mu,\kappa}^{0,\text{lf}}$  of  ``locally finite'' 
structures with unary functions.
It suggests there might exist a similar reformulation in terms of EM-representation. 
%
%
%
%
\end{rema}

\subsubsection{A category-theoretic characterisation of classes of stable models}\label{cat:reps}

We shall later see that that  a structure $\III$ EM-represents a model $M$
iff the identity map $|\III|\lra |M|$ induces an $\sFilth$-morphism 
$\III^\text{qf}_\bullet\lra M_\bullet$ between certain objects of $\sFilth$ defined in 
Def.~\ref{def:Mb} associated with $\III$ and $M$. 


Closing the filters on $\III^\text{qf}_\bullet$ and $ M_\bullet$ under arbitrary intersections
shall produce 
objects $I_\bullet^{\text{qf}\infty}$ and  $M_\bullet^{\infty}$ 
such that  structure $\III$ EM${}^\infty$-represents a model $M$                                                                                     
iff the identity map $|\III|\lra |M|$ induces an $\sFilth$-morphism                                                                                            
$\III^{\text{qf}\infty}_\bullet\lra M_\bullet^\infty$. The $\sFilth$-objects associated with models whose languages consists only
of equivalence relations are particularly simple, and this suggests a
category-theoretic characterisation of classes of models of stable theories in terms of $\sFilth$.


\begin{rema}\label{rema:equiFinSets}
A model $M$ with equivalence relations (and nothing else) gives rise to a 
functor $M^\text{qf}_\bullet: \Dop\lra \Filth$ which factors as 
$$M^\text{qf}_\bullet: \Dop\lra  \text{FiniteSets}^\text{op}\lra \Filth$$
%
which could be then be called of a ``2-dimensional'' ``symmetric'' simplicial filter.
Here by ``symmetric'' we mean that the simplicial filter $M^\text{qf}_\bullet$ factors as shown, 
i.e.~via the inclusion of categories $ \Dop\lra  \text{FiniteSets}^\text{op}$.
By ``2-dimensional'' we mean that for each $n>3$ 
the filter on $M^n=M_\bullet(n_\leq)$ is induced from that on $M^3=M_\bullet(3_\leq)=M_2$, i.e.~is the coarsest filter 
such that all face maps $[i\leq j\leq k]:M_\bullet(n_\leq)\lra =M_\bullet(3_\leq)$ are continuous.
In fact, this functor is probably the free $\sFilth$-object ``started by'' (i.e.~3-coskeleton generated by) $(M,M\times M,M\times M\times M)$
equipped with appropriate filters. 

Note that a similar reformulation can be given to the axioms of uniform structure
\S\ref{intro:def:sMetr} or topological space \S\ref{brb:propo2}, 
also cf.~[Bourbaki,II\S1.1,Def.1], for details see \S\ref{intro:def:sMetr} or \href{https://mishap.sdf.org/6a6ywke.pdf#40}{[6,Exercise 4.2.1.5]}: 
a uniform structure is a $1$-dimensional symmetric simplicial set such that 
the filter of $0$-simplicies is antidiscrete. 
\end{rema}

\begin{coro}\label{cor:reps} 
A theory $T$ is stable iff there is a cardinal $\kappa$ such that 
for each model $M\models T$ there is a surjective $\sFilth$-morphism  $I_\bullet\lra M_\bullet$
from a ``2-dimensional'' ``symmetric'' simplicial filter $I_\bullet:\Dop\lra \text{FiniteSets}^\text{op}\lra \Filth$ 
with at most $\kappa$ neighbourhoods,
or, equivalently, such that its filter structure is pulled back  from at most $\kappa$ morphisms to filters of form 
$$J_\bullet:\Dop\lra \text{FiniteSets}^\text{op}\lra \Filth$$ where for each $n>0$ $|J_\bullet(n_\leq)|$ is a finite set.
\end{coro}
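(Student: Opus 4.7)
The plan is to deduce this from Proposition~\ref{conj:reps} by passing through the translation---to be established in Definition~\ref{def:Mb} and the discussion surrounding Remark~\ref{rema:equiFinSets}---between EM${}^\infty$-representation and morphisms in $\sFilth$. The first point I would make precise is that whenever $|\III|=|M|$, an $\sFilth$-morphism $\III^{\text{qf}\infty}_\bullet\lra M^\infty_\bullet$ induced by the identity $\id_{|M|}$ exists if and only if $\III$ EM${}^\infty$-represents $M$. This should be essentially tautological once the filters on both sides are unwound: a morphism out of a simplicial filter whose underlying simplicial set is corepresented is determined by its action on $0$-simplices, and the continuity condition ``preimage of a neighbourhood is a neighbourhood'' transcribes literally into ``every quantifier-free indiscernible sequence in $\III$ is indiscernible in $M$'', since neighbourhoods on both sides are by construction (intersections of) sets of indiscernible tuples. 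Surjectivity on underlying sets comes for free from $|\III|=|M|$.

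The second step is to match the shape of $\III^{\text{qf}\infty}_\bullet$ with the class of 2-dimensional symmetric simplicial filters of Remark~\ref{rema:equiFinSets}. Given a structure $\III$ with at most $\kappa$ equivalence relations $\{E_\alpha\}_{\alpha<\kappa}$ and nothing else, quantifier-free indiscernibility is invariant under arbitrary permutations of arguments and is determined by pairs; hence $\III^{\text{qf}\infty}_\bullet$ factors through $\text{FiniteSets}^{\text{op}}$ and is 2-dimensional in the sense of that remark. For each $\alpha$ I would exhibit a morphism $\III^{\text{qf}\infty}_\bullet\lra J^{(\alpha)}_\bullet$ whose target has $J^{(\alpha)}_\bullet(n_\leq)$ equal to the (finite) set of partitions of an $n$-element set, face maps pulling back partitions along monotone maps, and filter generated by the two-element neighbourhood consisting of the discrete and the trivial partition; the morphism sends $(a_1,\dots,a_n)$ to the partition it induces via $E_\alpha$. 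A tuple is $E_\alpha$-indiscernible iff this partition is either trivial or discrete, so the filter on $\III^{\text{qf}\infty}_\bullet$ is precisely the coarsest one making all $\kappa$ of these morphisms continuous. Combined with the first step and Proposition~\ref{conj:reps}, this gives the forward implication ``$T$ stable $\Rightarrow$ the stated $\sFilth$-morphism exists''.

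For the reverse implication, a surjective $\sFilth$-morphism $I_\bullet\lra M_\bullet$ with 2-dimensional symmetric source whose filter is pulled back from $\kappa$ morphisms $I_\bullet\lra J^{(\alpha)}_\bullet$ into finite-valued simplicial filters yields, on $|I_\bullet(1_\leq)|=|M|$, a family of at most $\kappa$ equivalence relations (or Boolean combinations of finitely many, which stays within $\kappa$ by absorption): each is the preimage of an atom of the filter on $J^{(\alpha)}_\bullet(2_\leq)$ along the dimension-$2$ component of the morphism. The 2-dimensional symmetric hypothesis forces the filter in all higher dimensions to coincide with quantifier-free indiscernibility for this language, and surjectivity yields $|\III|=|M|$, so the recovered $\III$ EM${}^\infty$-represents $M$ and Proposition~\ref{conj:reps} gives stability of $T$.

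The main obstacle, and the point requiring most care, is the equivalence claimed inside the statement between ``at most $\kappa$ neighbourhoods'' and ``filter pulled back from $\kappa$ morphisms into finite simplicial filters''. On a 2-dimensional symmetric $I_\bullet$ a neighbourhood in dimension $2$ is a symmetric reflexive relation on $|I|$, and upgrading each such relation to an equivalence relation---by transitive closure, or by refining and re-indexing the morphisms within $\kappa$---must not disrupt continuity of the morphism to $M_\bullet$ nor inflate the cardinality past $\kappa$. Checking that this passage is compatible with the forthcoming precise definition of $M_\bullet^\infty$ in \S3.2.2, and that the ``closing the filters under arbitrary intersections'' step on the $M$-side does not destroy the pullback presentation on the $I$-side, is where the real bookkeeping lies; once it is settled, the rest of the argument is essentially a translation exercise.
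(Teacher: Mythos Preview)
Your forward direction is essentially the paper's argument, with more detail on the explicit finite targets $J^{(\alpha)}_\bullet$; the paper simply cites Proposition~\ref{conj:reps} and Remark~\ref{rema:equiFinSets} and leaves this implicit.

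For the reverse direction you take a genuinely different route from the paper, and the obstacle you flag is real. You attempt to reconstruct from the data $I_\bullet\to M_\bullet$ a structure $\III$ with equivalence relations and then invoke Proposition~\ref{conj:reps}. The paper does \emph{not} do this reconstruction. Instead it says ``the same proof as in Proposition~\ref{conj:reps}'' works directly: given a long indiscernible sequence in $M$, the bound on neighbourhoods (equivalently, the pullback from finitely many finite targets) bounds the number of ``types'' in $I_\bullet$, so Ramsey yields an infinite subsequence indiscernible in $I_\bullet$; the symmetry hypothesis (factoring through $\text{FiniteSets}^{\text{op}}$) forces any indiscernible sequence in $I_\bullet$ to be an indiscernible set; and continuity of $I_\bullet\to M_\bullet$ pushes this down to $M$. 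No equivalence relations on $|I|$ are ever named.

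This matters because the step you single out as ``the main obstacle'' is not obviously repairable. The preimage in $|I|\times|I|$ of a neighbourhood of a finite $J^{(\alpha)}_\bullet(2_\leq)$ is a symmetric reflexive relation, but need not be transitive, and taking transitive closure \emph{coarsens} the filter on $I_\bullet$; after coarsening, the preimage of a neighbourhood of $M_\bullet$ may no longer be a neighbourhood, so continuity of $I_\bullet\to M_\bullet$ can fail. The paper's direct argument sidesteps this: it needs only symmetry and the type-count bound, both of which are already present in the hypotheses on $I_\bullet$ without any massaging.
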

\begin{proof} By Proposition~\ref{conj:reps} and Remark~\ref{rema:equiFinSets} this does hold for a stable theory. 
The same proof as in Proposition~\ref{conj:reps} shows 
that in a saturated enough models indiscernible sequences are in fact indiscernible sets. 
\end{proof}

It will be interesting to compare this to \href{https://arxiv.org/abs/1810.01513}{[Boney, Erdos-Rado Classes, Thm~6.8]}.

\subsection{Stability as a Quillen negation analogous to a path lifting property}

Now we use the definitions and intuitions introduced in \S\ref{sec:sFdefs} 
to reformulate the characterisation of stability that ``each infinite indiscernible sequence
is necessarily an indiscernible set'' 
as a Quillen lifting property/negation.\footnote{
A morphism $i:A\to B$ in a category has {\em the left lifting property} with respect to a
morphism $p:X\to Y$, and $p:X\to Y$ also has {\em the right lifting property} with respect to $i:A\to B$,        
denoted  $i\rtt p$,
 iff for each 
       $f:A\to X$ and                                                                                                                           
       $g:B\to Y$
        such that                                                                                                                     
$p\circ f=g\circ i$
there exists                                                                                                         
       ${ h:B\to X}$ such that                                                                                                                     
       ${h\circ i=f}$ and                                                                                                                         
       ${p\circ h=g}$.
This notion is used to define properties of morphisms starting from an explicitly given class of morphisms,
often a list of (counter)examples,
and a useful intuition is to think that the property of left-lifting against a class $C$ is a kind of negation
of the property of being in $C$, and that right-lifting is also a kind of negation.
See~\href{https://en.wikipedia.org/wiki/Lifting_property}{[Wikipedia,Lifting\_property]} for details and examples. 
}


\subsubsection{An informal explanation} 
Surprisingly, the following naive, oversimplified, and mechanistic way of thinking produces 
a correct conjecture. Let us try ``transcribe'' this characterisation in $\sFilth$;
we explain the process in a verbose way; being oversimplified is an essential feature.  
An indiscernible {\em sequence} is indexed by a linear order 
and we would like to think of it as a map from {\em a linear order} to a model. 
An indiscernible {\em set} is indexed by a set
rather that a linear order, and we would like to think of it as  
a map from a {\em set} to a model.

In $\sFilth$ or indeed in $\sSets$, a straightforward way to interpret
``a map from a linear order $I^\leq$ to a model $M$'' 
is to consider a morphism  $I^\leq_\bullet \lra |M|_\bullet$ 
from the simplicial set $I^\leq_\bullet$ 
corepresented by the linear order, 
to the simplicial set $|M|_\bullet$ 
corepresented by the set of elements of the model. 

Note an oversimplification: we say nothing on the
crucial property assuring that this map encodes an indiscernible sequence.

Similarly, a straightforward way to interpret ``a map from a set $|I|$ to a model $M$'' 
is to consider a morphism $|I|_\bullet\lra |M|_\bullet$
from the simplicial set $|I|_\bullet$ 
corepresented by the set $|I|$ of elements of the  linear order, 
to the simplicial set $|M|_\bullet$ 
corepresented by the set of elements of the model. 

Note that in this simplicial formalisation, a map from an ordered
set is not automatically a map from its underlying set,
even though it is more natural when thinking of homomorphisms.
And indeed, if in our formalism  a map from an ordered                                                                                              
set were automatically a map from its underlying set,
we would not be able to reformulate the characterisation of stability (it would hold trivially). 

The characterisation says that each infinite indiscernible sequence
gives rise to an indiscernible set; accordingly, we'd like to say
that in $\sFilth$ and $\sSets$,   
a map $I^\leq_\bullet \lra |M|_\bullet$ 
gives rise to a map  
 $|I|_\bullet\lra |M|_\bullet$. Let us continue to ignore the meaning 
of being indiscernible. 

The characterisation says that each infinite indiscernible sequence
{\em is} necessarily an indiscernible set; accordingly, 
in $\sFilth$ and $\sSets$, there is a morphism 
$I^\leq_\bullet \lra |I|_\bullet$.

This enables us to rephrase the characterisation by saying
that each map 
 $I^\leq_\bullet \lra |M|_\bullet$ 
{\em extends to} to a map  
 $|I|_\bullet\lra |M|_\bullet$,
which is written as a lifting property
$$I^\leq_\bullet \lra |I|_\bullet \rtt M_\bullet\lra \top$$
where $\top$ is the terminal object of $\sFilth$ and thus can be ignored. 

So far we did not discuss ``indiscernability''. 
A sequence is indiscernible iff for each $n$ ``each $n$-tuple in increasing order is''. 
A straightforward way to talk in $\sFilth$ about ``indiscernible $n$-tuples'' is rather tautological: 
to consider the filter on $|M|^n=|M_\bullet(n_\leq)|$
generated by the set of indiscernible $n$-tuples, or say the sets of 
$\phi$-indiscernible tuples for various $\phi$. The phrase ``each $n$-tuple'' suggests
that we consider each tuple in $I^\leq_\bullet$ and $|I|_\bullet$ to be ``small'',
i.e.~equip  $I^\leq_\bullet$ and $|I|_\bullet$ with the antidiscrete filters.
A verification now shows that, with these definitions, an injective (continuous) morphism  
$I^\leq_\bullet \lra |M|_\bullet$ is the same as an indiscernible sequence in $M$ indexed by $I$,
and an injective (continuous) morphism $ |I|_\bullet \lra |M|_\bullet$ is the same as an indiscernible set in $M$ indexed by $I$.

Now we proceed to fulfil details of this little program and give a precise definition of $M_\bullet$
so that 
this lifting property
does indeed say that $M$ is stable. We'd like to stress again the particularly mechanistic and oversimplified nature of the considerations above.

\subsubsection{Generalised Stone spaces associated with structures}

Call a sequence  {\em (order) $\phi$-indiscernible with repetitions} iff each subsequence with {\em distinct} elements is necessarily 
(order) $\phi$-indiscernible. Recall that an order indiscernible sequence is the same as an indiscernible set.  
Note that we allow that there are only finitely many distinct elements in a $\phi$-indiscernible sequence with repetitions,
e.g.~$(a,b,a,b,a,b,...)$ is  $\phi$-indiscernible with repetitions for  $\{a,b\}$ an indiscernible set.

\def\Mphib{M_\bullet^{\{\phi\}}}
\def\Msb{M_\bullet^{\Sigma}} 
\begin{defi}[$\Mphib,\Msb,\Msb/A:\sFilth$]\label{def:Mb}
%
%

Let $M$ be a model, and let $\Sigma$ be a set of formulas in the language of $M$.

Let $\Msb:\Dop\lra\Filth$ be the simplicial filter whose underlying simplicial set is $|M|_\bullet$ corepresented by the set of elements of $M$,
i.e.~the functor 
$\homm{\Sets}-{|M|}:\Dop\lra\Sets$.
A subset  $$\varepsilon\subseteq \homm{\Sets}{n_\leq}{|M|}=\{(x_1,...,x_n)\in |M|^n\}=|M|^n$$ is a neighbourhood iff 
there is a finite subset $\Sigma'\subset \Sigma$ 
and $N>0$ such that it contains each sequence which can be extended 
to a $\Sigma'$-indiscernible sequence  with repetitions 
with at least $N$ distinct elements.
In notation:

A subset  
$$\varepsilon\subseteq \homm{\Sets}{n_\leq}{|M|}=\{(x_1,...,x_n)\in |M|^n\}=|M|^n$$ is a neighbourhood iff 
the following holds for some finite subset  $\Sigma'\subset \Sigma$ and  $N>0$
\bi 
\item $(a_1,...,a_n)\in \varepsilon$ whenever there exist distinct $a_{n+1},...,a_N$ such that 
 for any $m>0$ for any $1\leq i_1<...<i_m\leq N$ any subsequence $a_{i_1},..,a_{i_m}$ with distinct elements $a_{i_k}\neq a_{i_l}$, $1\leq k\neq l\leq N$,
is $\Sigma'$-indiscernible.
\ei

For a subset $A\subset |M|$ of parameters, let $M_\bullet/A$ be the quotient of $\Msb$ by the equivalence relation 
of having the same type over $A$, i.e.~$\Msb/A(n_\leq)$ is the set $S_n^M(A)$ of $n$-types over $A$ realised in $M$ 
equipped with the filter induced from $M_\bullet(n_\leq)$.

Let $M_\bullet$ denote $\Msb$ for $\Sigma$ the set of all parameter-free formulas of the language of $M$, 
and let $M^{L(A)}_\bullet$ denote $\Msb$ for $\Sigma$ the set of all  formulas of the language of $M$ with parameters in $A$.
Let $M^\text{qf}_\bullet$ denote $\Msb$ for $\Sigma$ the set of all quantifier-free formulas of the language of $M$.

\end{defi}

To verify that  $M^\Sigma_\bullet:\Dop\lra\Filth$ is indeed a functor to $\Filth$ rather than just $\Sets$,
it is enough to verify that for each $\phi\in \Sigma$ $\Mphib:\Dop\lra\Filth$ is indeed a functor to $\Filth$ rather than just $\Sets$. 
We only need to check that maps 
$$[i_1\leq ... \leq i_n]:M^m\lra M^n,\ \ \ (x_1,..,x_m)\longmapsto (x_{i_1},...,x_{i_n})$$ are continuous for any $1\leq i_1\leq ...\leq i_n\leq m$. 
This says that for each neighbourhood $\varepsilon\subseteq |M|^n$ there is a neighbourhood $\delta\subset |M|^m$ such that
$f(\delta)\subseteq \varepsilon$. That is, for some $N$, a tuple $(a_{i_1},...,a_{i_n})$ can be 
 can be extended to a $\phi$-indiscernible sequence with repetitions
with at least $N$ distinct elements,  
whenever, for some $N'$ not depending on $N$, 
tuple $(a_1,...,a_n)$ can be extended to a $\phi$-indiscernible sequence with repetitions
with at least $N'$ distinct elements. It is enough to take $N'=N+n$.

\begin{rema}[Generalised Stone spaces and Stone spaces]\label{rema:stone} In the definition above one may drop $N$ and define the filter on $|M|^n$ as:
\bi\item
A subset  $$\varepsilon\subseteq \homm{\Sets}{n_\leq}{|M|}=\{(x_1,...,x_n)\in |M|^n\}=|M|^n$$ is a neighbourhood iff 
there is a finite subset $\Sigma'\subset \Sigma$ 
 such that $\varepsilon$ contains 
each  $\Sigma'$-indiscernible sequence $(a_1,...,a_n)$ with repetitions
\ei
This is simpler but arguably less model-theoretically natural. Either variant of the definition 
can be used 
in the reformulations below involving the filter of tails and eventually indiscernible sequences, 
but the verification of ``degenerate'' cases, i.e.~of diagrams involving non-injective maps, 
will be different. 

 With this definition 
the filter on $|M|$ is antidiscrete, i.e. $\{|M|\}$; 
the filter on $|M\times M|$ is generated by sets $\phi(x)\leftrightarrow \phi(y)$, $\phi\in\Sigma$ unary, and in fact 
the forgetful functor $\sFilth\lra\Topp$ 
(see~\href{https://mishap.sdf.org/Skorokhod_Geometric_Realisation.pdf}{[7,\S2.6.3]}, 
also \href{http://mishap.sdf.org/6a6ywke/6a6ywke.pdf#24}{[6,\S2.2.4]}) takes $\Msb/A$ so defined into the Stone space of $1$-types over $A$.

\end{rema}

\subsubsection{Indiscernible sequences with repetitions}

The following lemma is the key observation which started this paper.

\def\Ileqb{I^\leq_\bullet}
\def\iIib{|I|_\bullet}

Let $|I|_\bullet$ and $I^\leq_\bullet$ denote the simplicial sets corepresented by the set $|I|$, resp.~the linear order $I^\leq$,
equipped with the antidiscrete filters (see Example~\ref{maps:coreps} for details). 
Recall that being equipped with antidiscrete filters means that for each $n>0$, 
the only neighbourhood on $|I|_\bullet(n_\leq)=|I|^n$ is the whole set $\{||I|(n_\leq)|\}$, and 
the only neighbourhood on  $I^\leq_\bullet(n_\leq)$ is  the whole set $\{|I^\leq_\bullet(n_\leq)|\}$.

Call 
an (order) $\phi$-indiscernible  sequence  with repetitions 
{\em infinitely extendable} iff it is a subsequence of a 
an (order) $\phi$-indiscernible  sequence   with repetitions
with infinitely many distinct elements. This notion is only non-trivial 
for sequences with finitely many distinct elements.

\begin{lemm}\label{iIbtoMb} For any infinite linear order $I$ and any structure $M$ the following holds. 
\bi 
\item An infinitely extendable   $\phi$-indiscernible  sequence  $(a_i)_{i\in I}$ in $M$  with repetitions 
induces an $\sFilth$-morphism $a_\bullet:\Ileqb \lra \Mphib $, and,  conversely,
each morphism $\Ileqb \lra \Mphib $ is induced by a unique such sequence.

\item An infinitely extendable  order $\phi$-indiscernible  sequence  $(a_i)_{i\in I}$ with repetitions 
induces an $\sFilth$-morphism $a_\bullet:\iIib \lra \Mphib $, and, conversely, 
each morphism $\iIib \lra \Mphib $ is induced by a unique such sequence.
\ei
\end{lemm}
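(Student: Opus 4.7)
The plan is to identify the underlying set-map in both directions and then to match the continuity condition in $\sFilth$ with the combinatorial condition of being an infinitely extendable $\phi$-indiscernible sequence with repetitions.

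First I would invoke the Yoneda-style argument of \S\ref{maps:coreps}: an $\sSets$-morphism $\iIib \to |M|_\bullet$ (resp.~$\Ileqb \to |M|_\bullet$) is uniquely determined by its action on $0$-simplices, i.e.~by a function $a:|I|\to |M|$, and at level $n$ it acts by $(t_1,\ldots,t_n)\mapsto (a_{t_1},\ldots,a_{t_n})$ on arbitrary tuples for $\iIib$ and on weakly increasing tuples for $\Ileqb$. Since $|M|_\bullet$ is the underlying simplicial set of $\Mphib$, this already supplies the asserted bijection at the level of underlying simplicial data, so it remains to see when the levelwise maps $a_n$ are continuous into the filter on $\Mphib(n_\leq)$.

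Since both $\iIib$ and $\Ileqb$ carry the antidiscrete filter at every level, continuity of $a_n$ amounts to requiring that the image of $a_n$ lie inside every neighbourhood of $\Mphib(n_\leq)$. By Def.~\ref{def:Mb}, a base of neighbourhoods is indexed by $N>0$, the $N$th one consisting of $n$-tuples in $|M|^n$ that extend, inside $M$, to a $\phi$-indiscernible sequence with repetitions having at least $N$ distinct elements. Hence continuity at all levels translates into the single condition: for every $n$, every $N$, and every finite tuple $(t_1,\ldots,t_n)$ (arbitrary for $\iIib$, weakly increasing for $\Ileqb$), the tuple $(a_{t_1},\ldots,a_{t_n})$ extends in $M$ to a $\phi$-indiscernible sequence with repetitions with $\geq N$ distinct elements.

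The final step is to equate this with the combinatorial hypothesis. The backward implication is immediate: an ambient $\phi$-indiscernible sequence with repetitions in $M$ extending $(a_i)$ and containing infinitely many distinct elements restricts, by truncation, to the required $N$-distinct extension of any finite image-tuple. For the forward implication, specialising the levelwise condition to tuples whose images have all distinct coordinates shows each such finite distinct subsequence of $(a_i)$ is $\phi$-indiscernible, which is precisely the definition of $(a_i)$ being $\phi$-indiscernible with repetitions; for $\iIib$, freely permuting the indices additionally gives order $\phi$-indiscernibility with repetitions, i.e.~the indiscernible-set condition. The main obstacle I anticipate is exhibiting a single ambient $\phi$-indiscernible sequence with repetitions in $M$ with infinitely many distinct elements containing $(a_i)$, in the case where $(a_i)$ itself has only finitely many distinct values: the levelwise condition supplies, for each finite tuple and each $N$, an $N$-distinct extension that a priori depends on the tuple and $N$, and stitching these into one infinite ambient sequence requires a compactness or pigeonhole argument inside $M$. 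In a sufficiently saturated $M$ the corresponding partial type is finitely satisfiable and hence realised; in general the lemma is naturally read up to passage to a saturated elementary extension.
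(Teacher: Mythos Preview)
Your proposal is correct and follows the same route as the paper: the Yoneda-style reduction of \S\ref{maps:coreps} to a map of sets, then the observation that the antidiscrete source filters make continuity amount to the image landing in every neighbourhood of $\Mphib(n_\leq)$, then unwinding Definition~\ref{def:Mb}. The paper does not isolate the finitely-many-distinct-values case you flag---it simply asserts ``This exactly means that the sequence $(a_i)_{i\in I}$ in $M$ is an infinitely extendable $\phi$-indiscernible sequence with repetitions''---so your compactness/saturation remark is a correct elaboration rather than a divergence from the paper's argument.
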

\begin{proof}
First note that by Example~\ref{maps:coreps} maps $|I|\lra |M|$ of sets 
are into one-to-one correspondence with morphisms $|I|_\bullet \lra |M|_\bullet$, 
i.e.~the morphisms of the underlying simplicial sets $|I|_\bullet=|\iIib|$ and $|M|_\bullet=|\Mphib|$,
and the same is true for $|I^\leq_\bullet|=|\Ileqb|\lra |M|_\bullet=|\Mphib|$.  Hence, 
at the level of the underlying simplicial sets,
any sequence  $(a_i)_{i\in I}$  gives rise to morphisms  
$|\Ileqb|\lra\,\,|\,\,\Mphib |$ and  $|\iIib| \lra |\Mphib|$ of the underlying simplicial sets.
Therefore, 
we only need to check what it means to be continuous for the following induced maps of filters for each $n>0$: 
$$ |\Ileqb(n_\leq)|=\{(i_1,...,i_n)\in |I|^n\,:\, i_1\leq ... \leq i_n\}  \lra M^n $$ 
$$ |\iIib(n_\leq)|= |I|^n \lra M^n$$
The sets
$ |I|^n$ and $ \{(i_1,...,i_n) \in |I|^n\,:\, i_1\leq ... \leq i_n\} $ 
are equipped with the antidiscrete filter, hence continuity means that the image of the map 
in contained in any neighbourhood. For the first map it means that for any weakly increasing sequence
$i_1\leq ... \leq i_n$ for any $N$ the sequence $(a_{i_1},...,a_{i_n})$ can be extended a $\phi$-indiscernible sequence
with repetitions with arbitrarily many distinct elements. 
This exactly means that  
the  sequence  $(a_i)_{i\in I}$ in $M$ 
is an infinitely extendable   $\phi$-indiscernible  sequence   with repetitions.

For the second map it means that for any sequence  
$i_1,  ...,  i_n$ (not necessarily ordered, and with repetitions) 
the sequence $(a_{i_1},...,a_{i_n})$ can be extended to a $\phi$-indiscernible sequence
with repetitions with arbitrarily many distinct elements. This means that the sequence $(a_i)_{i\in I}$ is order indiscernible,
and is also infinitely extendable $\phi$-indiscernible, as required.
\end{proof}

\def\Mb{M_\bullet}
\begin{rema} Using $\Mb/A$ instead of $\Mb$ in the lemma above would give us a
bit of flexibility at the cost of some simplicial combinatorics.
For a structure $I$,  $I$-indiscernibles induce a map of simplicial sets
$$|I_\bullet^\text{qf}/\approx| \lra |\Mb/A|$$
and each such map of simplicial sets is induced by $I$-indiscernibles.
For example, this allows one to talk about mutually indiscernible sequences $(I_t)_t$ 
over $A$,
though in this case it is probably better to consider a different simplicial set 
which ``remembers'' the order: instead of $|\cup_t I_t|_\bullet(n_\leq)=|\cup_t I_t|^n$ 
take its subset consisting of tuples where elements of each $I_t$ occur 
in an weakly increasing order.
\end{rema}

\subsubsection{Stability as Quillen negation of indiscernible sets (fixme: better
subtitle..)}

Let $\top=|\{pt\}|_\bullet$ denote the terminal object of the category $\sFilth$,
i.e.~``the 
 simplicial set corepresented by a singleton
equipped with antidiscrete filters'': for any $n>0$ $\top(n_\leq):=\{pt\}$, and the only big subset is $\{pt\}$ itself. 
\begin{propo}[Stability as Quillen negation]\label{main:stab:lift} 
Let $M$ be a model, and let $\phi$ be a formula in the language of $M$. 
The following are equivalent: 
\bi
\item[(i)] in the model $M$, each infinite $\phi$-indiscernible sequence is necessarily a $\phi$-indiscernible set


\item[(ii)] in the model $M$, each $\phi$-indiscernible sequence 
with repetitions and with infinitely many distinct elements is necessarily  order $\phi$-indiscernible with repetitions

\item[(iii)] in $\sFilth$ the following lifting property holds 
for each infinite linear order $I$:
$$ I_\bullet^{\leq\text{}} \lra {|I|}^{\text{}}_\bullet
\,\rtt\, \Mphib \lra \top 
$$
i.e.~the following diagram in $\sFilth$ holds:
\def\rrt#1#2#3#4#5#6{\xymatrix{ {#1} \ar[r]|{} \ar@{->}[d]|{#2} & {#4} \ar[d]|{#5} \\ {#3}  \ar[r] \ar@{-->}[ur]^{}& {#6} }}
$$ 
\ \ \xymatrix{  \Ileqb \ar[r]^{} \ar@{->}[d]|{} &   \Mphib  \ar[d]^{} 
\\  \iIib \ar[r]|-{} \ar@{-->}[ur]|{}& \top  }$$


\item[(iv)] in $\sFilth$ the following lifting property holds 
$$ \omega_\bullet^{\leq\text{}} \lra {|\omega|}_\bullet
\,\rtt\, \Mphib \lra \top 
$$
i.e.~the following diagram in $\sFilth$ holds:
\def\rrt#1#2#3#4#5#6{\xymatrix{ {#1} \ar[r]|{} \ar@{->}[d]|{#2} & {#4} \ar[d]|{#5} \\ {#3}  \ar[r] \ar@{-->}[ur]^{}& {#6} }}
$$ 
\ \ \xymatrix{  \omega_\bullet^\leq \ar[r]^{} \ar@{->}[d]|{} &   \Mphib  \ar[d]^{} 
\\  {|\omega|}^{\text{}}_\bullet  \ar[r]|-{} \ar@{-->}[ur]|{}& \top  }$$


\ei
\end{propo}
\begin{proof} 
(i)$\Leftrightarrow$(ii) is obvious.

(iii)$\implies$(i): Let $(a_i)_{i\in I}$ be an infinite $\phi$-indiscernible sequence.
By Lemma~\ref{iIbtoMb} it induces an $\sFilth$-morphism  $a_\bullet: \Ileqb  \lra \Mphib$. 
It fits into the commutative square, as any square with top vertex $\top$ commutes.
By the lifting property it lifts to a $\sFilth$-morphism  $a'_\bullet: \iIib  \lra \Mphib$. 
By commutativity of the upper triangle both morphisms  correspond to the same map of the underlying simplicial sets, 
i.e.~to the same sequence  $(a_i)_{i\in I}$. Again by  Lemma~\ref{iIbtoMb},
this sequence is order $\phi$-indiscernible with repetitions, and, in this case, just order $\phi$-indiscernible
as its elements are all distinct.

(ii)$\implies$(iii): Let $a_\bullet: \Ileqb  \lra \Mphib$ be the $\sFilth$-morphism corresponding 
to the lower horizontal arrow. By Lemma~\ref{iIbtoMb} it corresponds to an  
infinitely extendable $\phi$-indiscernible sequence  $(a_i)_{i\in I}$ with repetitions.
Pick such an infinite extension. It is $\phi$-indiscernible with repetitions, and by (ii) 
it is order $\phi$-indiscernible with repetitions. Hence, so is its subsequence $(a_i)_{i\in I}$, and 
by Lemma~\ref{iIbtoMb} it corresponds to an $\sFilth$-morphism $ \iIib\lra \Mphib$, as required.

(i)$\Leftrightarrow$(iv): by compactness (i) holds if it holds for sequences $(a_n)_{n\in\omega}$ indexed by $\omega$, 
which is what the proof of the equivalence (i)$\Leftrightarrow$(iii) gives us in the case $I=\omega$. 
\end{proof}

\subsection{NIP and eventually indiscernible sequences}

The notion of an eventually indiscernible sequence needed for NIP 
involves the filter of final segments of a linear order. 

First we associate $\sFilth$ objects with 
the filter of final segments of a linear order, 
so that an eventually indiscernible sequence is a morphism from that object to the model.
Then we rewrite the characterisation of NIP ``each indiscernible sequence is eventually indiscernible over a parameter'' as a diagram
which is almost, but not quite, a lifting property. We then modify slightly the definition of $M_\bullet$ 
so that it becomes a lifting property. 

We note that there is a bit of flexibility in choosing details of the construction of $M_\bullet$, which sometimes matter:
the filters we use for NIP do not fit for stability because they lack symmetry.

\subsubsection{Simplicial filters associated with filters on linear orders}

\def\IleqFb{I^{\leq,\FFF}_\bullet}
\def\Ileqtailsb{I^{\leq\text{tails}}_\bullet}
\def\iIiFb{|I|_\bullet^\FFF}
\def\iIitailsb{|I|_\bullet^\text{tails}}
\begin{defi}[$\iIiFb,\Ileqtailsb, \iIitailsb:\sFilth$]\label{def:tails}

Let $I$ be a linear order, and let $\FFF$ be a filter on $|I|$.  

Let $\iIiFb$ be the simplicial filter whose underlying simplicial set is corepresented by the set $|I|$, 
i.e.~the functor $\homm{\Sets}-{|I|}:\Dop\lra\Sets$. 
The set $$\homm{\Sets}{n_\leq}{|I|}=\{(t_1,...,t_n)\in |I|^n\}=|I|^n$$
is equipped with the filter generated by sets of the form  $\varepsilon^n$, $\varepsilon \in \FFF$.

Let $\IleqFb$ be the simplicial filter whose underlying simplicial set is corepresented by the linear order $I$, 
i.e.~the functor $\homm{preorders}-{I^\leq}:\Dop\lra\Sets$. 
The set $$\homm{preorders}{n_\leq}{I^\leq}=\{(t_1,...,t_n)\in |I|^n\,:\, t_1\leq ... \leq t_n\}$$
is equipped with the filter generated by sets of the form 
 $$\{(t_1,...,t_n)\in \varepsilon^n\,:\, t_1\leq ... \leq t_n\}, \ \ \varepsilon \in \FFF$$

Let  $\Ileqtailsb:=\IleqFb$, $\iIitailsb:=\iIiFb$  for $\FFF:=\{\{x:x\geq i\}:i\in I\}$ the filter generated by non-empty final segments of $I$.

\end{defi}

\subsubsection{NIP as almost a lifting property}
Let $\bot=|\{\}|_\bullet=|\emptyset|_\bullet$ denote the initial object of the category $\sFilth$,
i.e.~``the 
 simplicial set corepresented by the empty set: for any $n>0$ $\bot(n_\leq):=\emptyset$, and the only big subset is $\emptyset$ itself.

\begin{lemm}[NIP as almost a lifting property]\label{nip:almost} 
Let $M$ be a structure. 
Let $I$ be an infinite linear order. 
The following are equivalent: 
\bi
\item[(i)] in the model $M$, for each $b\in M$, each formula $\phi(-,b)$ 
each eventually indiscernible $I$-sequence (over $\emptyset$) is eventually $\phi(-,b)$-indiscernible

\item[(ii)] in $\sFilth$  each injective morphism  $ I_\bullet^{\leq\text{tails}}  \lra  M_\bullet  $ factors as 
 $ I_\bullet^{\leq\text{tails}}  \lra    M^{L(M)}_\bullet \lra  M_\bullet  $, i.e.~the following diagram holds:
$$ 
\ \ \xymatrix{ \bot \ar[r]^{} \ar@{->}[d]|{} &   M^{L(M)}_\bullet  \ar[d]^{} 
\\   I_\bullet^{\leq\text{tails}}  \ar[r]|-------{(inj)} \ar@{-->}[ur]|{}& M_\bullet  }$$

\item[(iii)] in $\sFilth$  each injective morphism  $ \omega_\bullet^{\leq\text{tails}}  \lra  M_\bullet  $ factors as 
 $ \omega_\bullet^{\leq\text{tails}}  \lra    M^{L(M)}_\bullet \lra  M_\bullet  $, i.e.~the following diagram holds:
$$ 
\ \ \xymatrix{ \bot \ar[r]^{} \ar@{->}[d]|{} &   M^{L(M)}_\bullet  \ar[d]^{} 
\\   \omega_\bullet^{\leq\text{tails}}  \ar[r]|-------{(inj)} \ar@{-->}[ur]|{}& M_\bullet  }$$
\ei
\end{lemm}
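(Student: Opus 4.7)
The strategy is to unpack the diagram in (ii) as a factorization condition, then translate morphisms into and out of simplicial filters built from corepresented simplicial sets into the model-theoretic language via the dictionary of Lemma~\ref{iIbtoMb} and Example~\ref{maps:coreps}. Since $\bot$ is the initial object of $\sFilth$, the two top/left arrows in the square carry no information, and the diagonal ``lift'' is simply a factorization of the injective bottom arrow $I^{\leq\text{tails}}_\bullet\to M_\bullet$ through the forgetful morphism $M^{L(M)}_\bullet\to M_\bullet$. So the whole content of (ii) is: every injective $\sFilth$-morphism $I^{\leq\text{tails}}_\bullet\to M_\bullet$ underlies an $\sFilth$-morphism $I^{\leq\text{tails}}_\bullet\to M^{L(M)}_\bullet$ on the same map of underlying sets.

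The first step is to establish the dictionary: an injective $\sFilth$-morphism $a_\bullet\colon I^{\leq\text{tails}}_\bullet\to M^\Sigma_\bullet$ is the same as an injective map $(a_i)_{i\in I}$ of $|I|$ into $|M|$ such that for every finite $\Sigma'\subset\Sigma$ and every $N$ there is a final segment $F\subset I$ with the property that for any weakly increasing $i_1\leq\ldots\leq i_n$ in $F$ the tuple $(a_{i_1},\ldots,a_{i_n})$ can be extended to a $\Sigma'$-indiscernible sequence with repetitions having at least $N$ distinct elements. This follows by imitating the proof of Lemma~\ref{iIbtoMb}: the underlying simplicial set maps are in bijection with set maps by Yoneda, and continuity against a generating neighbourhood of $M^\Sigma_\bullet(n_\leq)$ (parametrized by $\Sigma'$ and $N$) translates into the preimage containing a set of the form $\{(t_1,\ldots,t_n)\in\varepsilon^n:t_1\leq\ldots\leq t_n\}$ for some $\varepsilon\in\FFF$ where $\FFF$ is the filter of final segments. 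For \emph{injective} $a_\bullet$, the tuples $(a_{i_1},\ldots,a_{i_n})$ with strict $i_1<\ldots<i_n$ lying in a tail of $I$ are themselves the relevant initial segments of a $\Sigma'$-indiscernible sequence, so the condition collapses to the standard notion: $(a_i)_{i\in I}$ is eventually $\Sigma$-indiscernible over $\emptyset$.

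Given this dictionary, (i)$\Leftrightarrow$(ii) is then a matter of bookkeeping. The morphism $I^{\leq\text{tails}}_\bullet\to M_\bullet$ corresponds to an eventually $L(\emptyset)$-indiscernible sequence, and factoring it through $M^{L(M)}_\bullet$ means the same sequence is eventually $\Sigma'$-indiscernible for every finite $\Sigma'\subset L(M)$, i.e.\ eventually $\phi(-,b)$-indiscernible for every formula $\phi$ and parameter $b\in M$. This is verbatim (i), once one notes (using that a finite intersection of tails is a tail) that ``eventually, for all finite $\Sigma'$'' is equivalent to ``for all finite $\Sigma'$, eventually''. Finally, (iii)$\Rightarrow$(ii) is obtained by compactness in the usual way: an injective eventually $\emptyset$-indiscernible $I$-indexed sequence that fails (ii) would have a witness $\phi(-,b)$ and a cofinal $\omega$-subsequence still failing eventual $\phi(-,b)$-indiscernibility; feeding this $\omega$-subsequence into (iii) produces a contradiction. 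The direction (ii)$\Rightarrow$(iii) is trivial since $\omega$ is a particular infinite linear order.

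The step I expect to be delicate is the injectivity clause together with the ``with repetitions'' bookkeeping in Definition~\ref{def:Mb}: the neighbourhoods of $M^\Sigma_\bullet(n_\leq)$ are defined via extendability to indiscernible sequences allowing repeats and demanding $N$ \emph{distinct} elements, so a careful argument is needed to see that, restricted to injective tuples $(a_{i_1},\ldots,a_{i_n})$ arising in a tail, the condition matches the standard notion of eventual indiscernibility without artefacts coming from equal indices. This is where the injectivity hypothesis in (ii) and (iii) is essential; it is also the reason one uses the filter of tails rather than an unordered analogue, since the latter lacks enough symmetry to encode ``over a parameter'' for stability (cf.\ Remark~\ref{rema:stone}).
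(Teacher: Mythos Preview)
Your proposal is correct and takes essentially the same approach as the paper's own (brief) proof: unpack the lifting square as a bare factorization (since $\bot$ is initial), translate injective $\sFilth$-morphisms $I^{\leq\text{tails}}_\bullet\to M^\Sigma_\bullet$ into eventually $\Sigma$-indiscernible injective sequences via the Yoneda/continuity dictionary, and invoke compactness for the passage between $I$ and $\omega$. Your discussion of the ``with repetitions'' bookkeeping and why injectivity makes it collapse to the ordinary notion is more careful than the paper's, which simply says ``Injectivity means its elements are all distinct. For such a sequence continuity of the morphism then means the sequence is $\phi$-indiscernible.''

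One small correction: your claim that (ii)$\Rightarrow$(iii) is ``trivial since $\omega$ is a particular infinite linear order'' is not right as stated, because the lemma fixes a single infinite $I$ at the outset rather than quantifying (ii) over all infinite linear orders. Both directions of (ii)$\Leftrightarrow$(iii) go through the $I$-independence of condition (i), i.e.\ through compactness; the paper handles this with the same one-word ``by compactness'' you use for the other direction.
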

\begin{proof} 
%
(ii)$\implies$(i): Let $(a_i)_{i\in I}$ be an indiscernible sequence. 
It induces a morphism  $ I_\bullet^{\leq\text{tails}}  \lra  M_\bullet  $ (the bottom arrow),
which by the diagram in (ii) lifts to a diagonal arrow  $ I_\bullet^{\leq\text{tails}}  \lra   M^{L(M)}_\bullet  $.
By commutativity of the triangle it corresponds to the same sequence, 
and continuity means this sequence is indiscernible.

(i)$\implies$(ii): Consider the bottom arrow  $ I_\bullet^{\leq\text{tails}}  \lra  M_\bullet  $.
It corresponds to a sequence $(a_i)_{i\in I}$. Injectivity means its elements are all distinct.
For such a sequence continuity of the morphism then means the sequence is $\phi$-indiscernible,
hence is eventually indiscernible over any parameter. Hence, the diagonal map 
 $ I_\bullet^{\leq\text{tails}}  \lra   M^{L(M)}_\bullet  $ is continuous.

(i)$\leftrightarrow$(ii): by compactness.
\end{proof}

However, note the diagram (ii) may fail for a non-injective morphism. Indeed, consider a sequence $(a,b,a,b,...)$
where $a,b$ are elements of an indiscernible set, is not indiscernible over $b$.
It represents a continuous map $ I_\bullet^{\leq\text{tails}}  \lra  M_\bullet$,
but it is not continuous as a map to  $M^{L(M)}_\bullet$.
 
In the next subsection we slightly modify the definition of the filters on $M_\bullet$ 
to take care of this ``degenerate'' case,
and define NIP as a lifting property
$$
\bot \lra I_\bullet^{\leq\text{tails}} 
\,\rtt\,
M^{L(M)}_\bullet \lra M_\bullet$$

In \href{https://mishap.sdf.org/yetanothernotanobfuscatedstudy.pdf}{[8, Appendix \S\ref{sec:nip}]}  we also reformulate as a lifting property the characterisation of NIP using average/limit types
 as a lifting property  reminiscent of the lifting property defining completeness of metric spaces.
They both use an endomorphism ``shifting dimension'' of $\Delta$ and $\sFilth$ which 
is also used in topology to define limits (and being locally trivial).

\subsubsection{NIP as a lifting property}

Call a sequence  (order) $\phi$-indiscernible with {\em consecutive} repetitions 
iff each subsequence with {\em distinct consecutive}  elements is necessarily 
(order) $\phi$-indiscernible. 
The sequence $(a,b,a,b,a,b,...)$ where $\{a,b\}$ is an indiscernible set, is an example 
of a sequence which is indiscernible with repetitions 
but not indiscernible with  consecutive repetitions. A sequence $(a,a,b,...,b,c,..,c,..,)$ is 
  $\phi$-indiscernible with  consecutive repetitions.
Note that an infinite  indiscernible sequence  with  consecutive repetitions 
is necessarily either eventually constant or has infinitely many distinct elements.

\def\Mphib{M_\bullet^{\{\phi\}}}
\def\Msb{M_\bullet^{\Sigma}} 
Let $M'^\Sigma_\bullet$ denote the simplicial set $|M|_\bullet$ equipped with filters defined as in Definition~\ref{def:Mb} 
where everywhere words ``with repetition'' are replaced by ``with consecutive repetitions''. 

Note that for an injective map $a:|I|\lra |M|$, the induced map $a_\bullet:\Ileqtailsb\lra M'_\bullet$ is continuous if and only if 
the induced map  $a_\bullet:\Ileqtailsb\lra M_\bullet$ is continuous.

\begin{lemm}[NIP as a lifting property]\label{nip:lifts} 
Let $M$ be a structure. 
Let $I$ be an infinite linear order. 
The following are equivalent: 
\bi
\item[(i)] in the model $M$, for each $b\in M$, each formula $\phi(-,b)$ 
each eventually indiscernible $I$-sequence (over $\emptyset$) is eventually $\phi(-,b)$-indiscernible

\item[(ii)] in $\sFilth$  each morphism  $ I_\bullet^{\leq\text{tails}}  \lra  M_\bullet  $ factors as 
 $ I_\bullet^{\leq\text{tails}}  \lra    M'^{L(M)}_\bullet \lra  M'_\bullet  $, i.e.~the following lifting property holds: 
$$
\bot \lra I_\bullet^{\leq\text{tails}} 
\,\rtt\,
M'^{L(M)}_\bullet \lra M'_\bullet$$ i.e.~in $\sFilth$ the following diagram holds:
$$ 
\ \ \xymatrix{ \bot \ar[r]^{} \ar@{->}[d]|{} &   M'^{L(M)}_\bullet  \ar[d]^{} 
\\   I_\bullet^{\leq\text{tails}}  \ar[r]|-------{} \ar@{-->}[ur]|{}& M'_\bullet  }$$

\item[(iii)] in $\sFilth$  each morphism  $ \omega_\bullet^{\leq\text{tails}}  \lra  M_\bullet  $ factors as 
 $ \omega_\bullet^{\leq\text{tails}}  \lra    M'^{L(M)}_\bullet \lra  M'_\bullet  $, i.e.~the following lifting property holds: 
$$
\bot \lra \omega_\bullet^{\leq\text{tails}} 
\,\rtt\,
M'^{L(M)}_\bullet \lra M'_\bullet$$ i.e.~in $\sFilth$ the following diagram holds:
$$ 
\ \ \xymatrix{ \bot \ar[r]^{} \ar@{->}[d]|{} &   M'^{L(M)}_\bullet  \ar[d]^{} 
\\   \omega_\bullet^{\leq\text{tails}}  \ar[r]|-------{} \ar@{-->}[ur]|{}& M'_\bullet  }$$
\ei
\end{lemm}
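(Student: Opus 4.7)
The plan is to adapt the argument of Lemma~\ref{nip:almost} by exploiting the ``consecutive repetitions'' structure of $M'^\Sigma_\bullet$ to remove the injectivity hypothesis. First, I would extend Lemma~\ref{iIbtoMb} to the present setting: a morphism $\Ileqtailsb \to M'^\Sigma_\bullet$ corresponds to a sequence $(a_i)_{i \in I}$ such that for every finite $\Sigma' \subset \Sigma$ and every $N$, some tail of $I$ carries the property that every weakly increasing tuple $(a_{i_1},\ldots,a_{i_n})$ can be extended to a $\Sigma'$-indiscernible sequence \emph{with consecutive repetitions} having at least $N$ distinct elements. This is a routine unfolding of the definitions of the filters in Definitions~\ref{def:Mb} and~\ref{def:tails}, with ``with repetitions'' replaced by ``with consecutive repetitions'' throughout.

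The key structural observation, already flagged just before the statement of the lemma, is that an infinite sequence whose tails are $\phi$-indiscernible with consecutive repetitions for each $\phi$ is necessarily either eventually constant or has infinitely many distinct elements on every tail. This dichotomy is precisely why ``consecutive repetitions'' resolves the failure of Lemma~\ref{nip:almost}: the pathological sequence $(a,b,a,b,\ldots)$ fails to be $\phi$-indiscernible with consecutive repetitions as soon as $\phi$ separates $a$ from $b$, so such a sequence no longer even induces a continuous map to $M'_\bullet$.

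For the implication (ii) $\Rightarrow$ (i), I would proceed as in Lemma~\ref{nip:almost}: an eventually indiscernible sequence $(a_i)_{i \in I}$ induces a morphism $\Ileqtailsb \to M'_\bullet$ (since eventually indiscernible trivially implies eventually indiscernible with consecutive repetitions); the diagonal lift to $M'^{L(M)}_\bullet$ then encodes that the sequence is eventually $\phi(-,b)$-indiscernible for each parameter $b \in M$, and commutativity of the upper triangle ensures the two morphisms correspond to the same sequence.

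For the harder direction (i) $\Rightarrow$ (ii), let the bottom arrow $\Ileqtailsb \to M'_\bullet$ correspond to a sequence $(a_i)_{i \in I}$. By the dichotomy, some tail is either eventually constant---in which case the diagonal lift to $M'^{L(M)}_\bullet$ is immediate---or contains infinitely many distinct elements. In the latter case, the subsequence of first occurrences of distinct elements is eventually $\phi$-indiscernible for every parameter-free $\phi$, hence eventually indiscernible in the usual sense; condition~(i) then yields that this subsequence is eventually $\phi(-,b)$-indiscernible for each $b \in M$, and transferring back to the original sequence via its consecutive-repetition structure supplies continuity of the diagonal map. The equivalence (i) $\Leftrightarrow$ (iii) follows by the same compactness argument as in Proposition~\ref{main:stab:lift}. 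I expect the main obstacle to be precisely this last transfer step: verifying that eventual $\phi(-,b)$-indiscernibility of the distinct-element subsequence implies eventual $\phi(-,b)$-indiscernibility with consecutive repetitions of the full sequence requires checking that duplicating an element into a consecutive block preserves $\phi(-,b)$-indiscernibility, which is a small but nontrivial combinatorial verification carried out uniformly in the finite sets of formulas appearing in the filter definition.
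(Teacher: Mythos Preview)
Your proposal is correct and follows essentially the same route as the paper: both arguments hinge on the dichotomy that a sequence continuous into $M'_\bullet$ is either eventually constant or has infinitely many distinct elements on every tail, handle the constant case trivially, and in the non-constant case invoke (i) before appealing to compactness for (ii)$\Leftrightarrow$(iii). The paper's proof is terser and applies (i) directly to the tail without explicitly passing to the first-occurrence subsequence, but the content is the same.

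One remark: the transfer step you flag as the main obstacle is actually trivial. Once you know the sequence has block structure $a,\ldots,a,b,\ldots,b,c,\ldots$ (which follows from $=$-indiscernibility with consecutive repetitions), any subsequence with distinct consecutive elements picks at most one element from each block, and all elements within a block are literally equal; so such a subsequence \emph{is} a subsequence of the first-occurrence sequence as a sequence of elements of $M$. Hence eventual $\phi(-,b)$-indiscernibility of the first-occurrence subsequence immediately gives eventual $\phi(-,b)$-indiscernibility with consecutive repetitions of the full sequence, with no combinatorial verification needed.
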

\begin{proof} 
%
(ii)$\implies$(i): same as before because here  we need to consider only injective maps. 

(i)$\implies$(ii): Consider the bottom arrow  $ I_\bullet^{\leq\text{tails}}  \lra  M'_\bullet  $.
It corresponds to a sequence $(a_i)_{i\in I}$. 
For such a sequence continuity of the morphism then means for each $\phi$ some final segment
$(a_i)_{i>i_0}$ is $\phi$-indiscernible with consecutive repetitions.
If this final segment has only finitely many distinct elements, it is eventually constant, hence 
eventually indiscernible over any parameter. If it has infinitely many distinct elements,
we can use (i) to conclude it is eventually indiscernible over any parameter. Hence, the diagonal map
 $ I_\bullet^{\leq\text{tails}}  \lra   M'^{L(M)}_\bullet  $ is continuous.

(ii)$\leftrightarrow$(iii): by compactness.
\end{proof}

\subsubsection{Cauchy sequences: a formal analogy to indiscernible sequences}

This is formally unnecessary but might help the reader's intuition. 

Recall that with a metric space $M$ we associate its simplicial filter $M_\bullet$ whose underlying simplicial set 
$|M|_\bullet$ is corepresented by the set of elements of $M$, and 
where we equip $|M|^n$ with {\em the filter of $\varepsilon$-neighbourhoods of the main diagonal} 
generated by the subsets, for $\epsilon>0$
$$ \{\, (x_1,...,x_n)\,:\, \dist(x_i,x_j)<\epsilon \text{ for }1\leq i<j\leq n \,\}
$$

The lemma below establishes a formal analogy between Cauchy sequences and indiscernible sequences with repetitions:
they both are defined as in $\sFilth$ as morphisms from the same object associated with a linear order.

\begin{lemm}\label{IbtoMetrb} For any linear order $I$ and any metric space $M$ the following holds. 
\bi 
\item[(i)]  A sequence  $(a_i)_{i\in I}$ in $M$ induces an morphism $a_\bullet:|I|_\bullet \lra |M|_\bullet $ of $\sSets$, 
and, conversely, each morphism $|I|_\bullet \lra |M|_\bullet  $ is induced by a unique such sequence
\item[(ii)]  A Cauchy sequence  $(a_i)_{i\in I}$ in $M$  
induces an $\sFilth$-morphism $a_\bullet:\Ileqtailsb \lra M_\bullet $, and, conversely, 
each   morphism $\Ileqtailsb \lra \Mb $ is induced by to a unique such sequence.

\item[(iii)]  A Cauchy sequence  $(a_i)_{i\in I}$  in $M$
induces an $\sFilth$-morphism $a_\bullet:\iIitailsb \lra \Mb $, and, conversely, 
each  morphism $\iIitailsb \lra \Mb $  induced by a unique such  sequence.
\ei
\end{lemm}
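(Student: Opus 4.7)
The plan is to reduce everything to the $n=2$ case by exploiting the corepresented structure of the source, and then to observe that the continuity condition at level $n=2$ is, essentially by definition, the Cauchy condition.

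For part (i), the claim is immediate from the Yoneda-style discussion in \S\ref{maps:coreps}: the underlying simplicial sets of both $|I|_\bullet$ and $|M|_\bullet$ are corepresented by the sets $|I|$ and $|M|$ respectively, and the Yoneda argument given there shows that natural transformations between such corepresented functors are in bijection with maps of the underlying sets. The same observation handles the simplicial-set part of (ii) and (iii) as well (using the preorder-corepresentation for $\Ileqtailsb$ in case (ii)). So in each of (ii) and (iii) the only thing to verify is that, given a set map $a:|I|\lra |M|$, the induced natural transformation is level-wise continuous with respect to the filters iff $(a_i)_{i\in I}$ is Cauchy.

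Next I unpack what ``level-wise continuous'' means explicitly. By Definition~\ref{def:tails}, the neighbourhood basis of $\iIitailsb(n_\leq)=|I|^n$ consists of sets $\varepsilon^n$ with $\varepsilon$ a non-empty final segment of $I$, and similarly for $\Ileqtailsb(n_\leq)$ intersected with the ordered region $t_1\leq\ldots\leq t_n$. The neighbourhood basis of $M_\bullet(n_\leq)=|M|^n$ consists, per \S\ref{def:filt:metr}, of sets $\{(x_1,\ldots,x_n):\dist(x_i,x_j)<\epsilon\ \forall i,j\}$ for $\epsilon>0$. Thus continuity of the induced map at level $n$ means: for every $\epsilon>0$ there is $i_0\in I$ such that for every tuple $(t_1,\ldots,t_n)$ (either ordered, for (ii), or arbitrary, for (iii)) with all $t_k\geq i_0$ one has $\dist(a_{t_j},a_{t_k})<\epsilon$ for all $j,k$.

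Now the main step. At $n=2$ this condition---applied to ordered pairs $t_1\leq t_2$ in case (ii) or arbitrary pairs in case (iii), which are interchangeable since $\dist$ is symmetric---is precisely the Cauchy condition for $(a_i)_{i\in I}$. Conversely, if $(a_i)_{i\in I}$ is Cauchy and $\epsilon>0$ is given, choose $i_0$ with $\dist(a_s,a_t)<\epsilon$ for all $s,t\geq i_0$; then the $n$-th level continuity condition holds with the same $i_0$ by the triangle inequality (in fact directly, since the pairwise bound is exactly what is required). So the $n=2$ case implies all higher levels, and uniqueness of the sequence is immediate from the fact that the $0$-simplex level recovers $a$ itself. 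The only mildly fiddly point---and the closest thing to an obstacle---is keeping the two variants (ordered tails versus all tails) separated in the bookkeeping, but the symmetry of the metric makes them coincide at the level of the Cauchy condition, so (ii) and (iii) are established in parallel.
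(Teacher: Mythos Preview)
Your proof is correct and follows essentially the same approach as the paper: invoke the Yoneda-style argument for part (i), then unpack level-wise continuity and verify that at $n=2$ it coincides with the Cauchy condition, with higher $n$ following immediately. Your observation that the symmetry of $\dist$ makes the ordered and unordered versions coincide is exactly what the paper uses implicitly when it writes out (ii) and (iii) in parallel.
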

\begin{proof}
(i): A map $a:|I|\lra |M|$ of sets induces a natural transformation of functors $f_\bullet:I^\leq_\bullet\lra |M|_\bullet$:
for each $n>0$, a tuple $(i_1\leq ..,\leq i_n)\in I^\leq_\bullet(n_\leq)$ goes into tuple $(a_{i_1},,...,a_{i_n})\in |M|^n=|M|_\bullet(n_\leq)$. 
Now let us show that every natural transformation $a_\bullet: I^\leq_\bullet\lra |M|_\bullet$ is necessarily of this  form,
by the following easy argument. Let $(y_1,..,y_n)=f_n(x_1,..,x_n)$; by functoriality using maps $[i]:1\lra n, 1\mapsto i$ 
we know that $y_i=(y_1,..,y_n)[i]=f_n(x_1,..,x_n)[i]=f_1(x_i)$. Finally, use that for any $y_1,..,y_n\in |M|_\bullet(1_\leq)=|M|$
there is a unique element  $\tilde y \in  |M|_\bullet(n_\leq)=|M|^n$
such that  $y_i=\tilde y[i]$. In a more geometric language, we may say 
that we used the following property of the simplicial set $|M|_\bullet$: 
that  each ``(n-1)-simplex'' $(y_1,..,y_n)$ is uniquely determined by its ``$0$-dimensional faces'' $y_1,...,y_n\in M$.


(ii): We need to check what it means to for the map $f_n:I^\leq_\bullet(n_\leq) \lra M^n$ of filters to be continuous.
Consider $n=2$; for $n>2$ the argument is the same. Continuity means that  
for each $\epsilon>0$ 
and thereby ``$\epsilon$-neighbourhood $\varepsilon:=\{(x,y):x,y\in M, \dist(x,y)<\epsilon\}$ 
of the main diagonal'' there is $N>0$ and 
thereby a ``$N$-tail'' neighbourhood $\updelta:=\{(i,j)\in \omega\times \omega : N\leq i\leq j\}$ 
such that $a(\updelta)\subset\varepsilon$, i.e.~for each $j\geq i >N$ it holds $\dist(a_i,a_j)<\epsilon$.

(iii): We need to check what it means to for the map $f_n:|I|^n \lra M^n$ of filters to be continuous.
Consider $n=2$; for $n>2$ the argument is the same. Continuity means that  
for each $\epsilon>0$ 
and thereby ``$\epsilon$-neighbourhood $\varepsilon:=\{(x,y):x,y\in M, \dist(x,y)<\epsilon\}$ 
of the main diagonal'' there is $N>0$ and 
thereby a ``$N$-tail'' neighbourhood $\updelta:=\{(i,j)\in \omega\times \omega : i,j>N\}$ 
such that $a(\updelta)\subset\varepsilon$, i.e.~for each $i,j>N$ it holds $\dist(a_i,a_j)<\epsilon$. 
\end{proof}

\subsubsection{Stability as Quillen negation of eventually (order) indiscernible sequences}

The characterisation of stability ``each eventually $\phi$-indiscernible sequence is necessarily an eventually order $\phi$-indiscernible''
and 
is a lifting property with respect to $I_\bullet^{\leq\text{tails}} $.


\begin{propo}[Stability as Quillen negation]\label{main:eventually:stab:lift} 
Let $M$ be a model, and let $\phi$ be a formula in the language of $M$. Let $I$ be a linear order. 
The following are equivalent: 
\bi
\item[(i)] in the model $M$, each eventually $\phi$-indiscernible sequence is necessarily eventually order $\phi$-indiscernible 

\item[(ii)] in the model $M$, each eventually  $\phi$-indiscernible sequence 
with repetitions is necessarily eventually order $\phi$-indiscernible with repetitions

\item[(iii)]  the following lifting property holds in the category $\sFilth$:
$$ I_\bullet^{\leq\text{tails}} \lra {|I|}^{\text{tails}}_\bullet
\,\rtt\, \Mphib \lra \top 
$$
i.e.~the following diagram in $\sFilth$ holds:
\def\rrt#1#2#3#4#5#6{\xymatrix{ {#1} \ar[r]|{} \ar@{->}[d]|{#2} & {#4} \ar[d]|{#5} \\ {#3}  \ar[r] \ar@{-->}[ur]^{}& {#6} }}
$$ 
\ \ \xymatrix{  \Ileqtailsb \ar[r]^{} \ar@{->}[d]|{} &   \Mphib  \ar[d]^{} 
\\  \iIitailsb \ar[r]|-{} \ar@{-->}[ur]|{}& \top  }$$


%

\ei
\end{propo}
\begin{proof} 
(ii)$\implies$(i) is trivial so we only need to prove (i)$\implies$(ii):
Consider an eventually $\phi$-indiscernible sequence with repetitions. 
Take a maximal subsequence with distinct elements. First assume it is infinite. 
Then it is eventually $\phi$-indiscernible, hence eventually order $\phi$-indiscernible by (ii), 
hence  the original sequence is eventually order $\phi$-indiscernible with repetitions. 

So what happens if there are only finitely many distinct elements?\footnote{We remark that in category theory it is often important that things work
in a ``degenerate'' case, such as, here, the case of a $\phi$-indiscernible  sequence which has only finitely many distinct elements.
Note that we would not have been able to write the lifting property if not for this set-theoretic argument.} 
Call this sequence $(a_i)_{i\in I}$.
Pick an initial segment $(a_i)_{i\leq i_0}$ 
of the sequence  which contains all the elements of $(a_i)_{i\in I}$ 
which occur only finitely many times; then in the corresponding final segment $(a_i)_{i>i_0}$ 
each element occurs infinitely many times whenever it occurs there at all. 
Therefore any finite subsequence of that final segment occurs there in an arbitrary order, 
hence $(a_i)_{i>i_0}$ is $\phi$-indiscernible with repetitions iff it is order $\phi$-indiscernible with repetitions. 
 
(iii)$\implies$(ii): Let $(a_i)_{i\in I}$ be an eventually $\phi$-indiscernible sequence with repetitions.
By Lemma~\ref{iIbtoMb} it induces an $\sFilth$-morphism  $a_\bullet: \Ileqtailsb  \lra \Mphib$. 
By the lifting property it lifts to a $\sFilth$-morphism  $a'_\bullet: \iIitailsb  \lra \Mphib$. 
By commutativity they both correspond to the same map of the underlying simplicial sets, 
i.e.~to the same sequence  $(a_i)_{i\in I}$. Again by  Lemma~\ref{iIbtoMb},
this sequence is eventually order $\phi$-indiscernible with repetitions.

(ii)$\implies$(iii): Let $a_\bullet: \Ileqtailsb  \lra \Mphib$ be the $\sFilth$-morphism corresponding 
to the lower horizontal arrow. By Lemma~\ref{iIbtoMb} it corresponds to an  
eventually $\phi$-indiscernible sequence  $(a_i)_{i\in I}$ with repetitions.
By (ii') this sequence is also eventually order $\phi$-indiscernible with repetitions. 
By Lemma~\ref{iIbtoMb} it corresponds to an $\sFilth$-morphism $ \iIitailsb\lra \Mphib$, as required.
\end{proof}

\subsection{Questions}\label{questions} 

Let us now formulate several questions the technique of Quillen negation allows us to formulate.
Note it would be rather cumbersome to reformulate these questions back into model theoretic language. 

\subsubsection{Double Quillen negation/orthogonal of a model} 
In the notation of Quillen negation,\footnote
{Denote by $P^\lrl$ and $P^\rlr$ the classes (properties) of morphisms having the left, resp.~right, lifting property
with respect to all morphisms with property $P$:
$$P^\lrl:=\{ f\rtt g: g\in P\} \ \ \ \ P^\rlr := \{ f\rtt g: f\in P\}$$
It is convenient to refer to $P^\lrl$ and $P^\rlr$ as the property of {\em left, resp.~right, Quillen negation of property $P$}, and $P^\rl:=(P^\rlr)^\lrl\supset P $ and $P^\lr:=(P^\lrl)^\rlr\supset P$ as {\em Quillen generalisation of property $P$}.
} Proposition~\ref{main:stab:lift}(iii) can be stated concisely as:
\bi\item
a model $M$ is stable iff $M_\bullet\lra\top \in \left\{\,\omega_\bullet \lra |\omega|_\bullet \right\}^\rlr$
\ei
Purely by a standard elementary diagram chasing calculation the Proposition~\ref{main:stab:lift} above implies that
if a model $M$ is stable and $N$ is an arbitrary model, 
\bi\item
$N_\bullet\lra\top \in \left\{\,M_\bullet\lra\top\right\}^\lr$ implies $N$ is stable
\item $N$ is stable iff $N_\bullet\lra\top\,\in\, \left\{\,M _\bullet\lra\top\,\,:\,\, M\text{ a stable model}\right\}^\lr$ 
\ei
Indeed, Proposition~\ref{main:stab:lift} states that  $\left\{\,M_\bullet\lra\top\right\}^\lrl$ contains 
a particular morphism, and thus every morphism in $\left\{\,M_\bullet\lra\top\right\}^\lr$ lifts
with respect to that morphism. 
This allows us to formulate the following amusingly concise questions:

\begin{enonce}{Question}\label{que:stab_lr} Is the following true?\footnote{In set theoretic notation the classes of models defined 
by these double Quillen negations are explained in Lemma~\ref{lem:que:stab_lr:i:app}. This explantation is preliminary.}
\bi\item A model $M$ is stable iff $M_\bullet\lra\top \,\in\,\left\{ (\Bbb C;+,*)_\bullet\lra \top \right\}^\text{lr}$.
\item A model $M$ is distal iff $M_\bullet\lra\top \,\in\, \left\{ (\Bbb Q_p;+,*)_\bullet\lra \top \right\}^\text{lr}$.
\ei
\end{enonce}
 Answering these questions, even negatively, will represent a grasp of $\sFilth$ and its expressive power 
at an elementary level.

\subsubsection{$ACF_0$- and stable replacement of a model}\label{M2:stable} And, in a similarly concise manner, 
our category theoretic technique\footnote{
It is convenient to say that a morphism $A\lra B$ has a  property $P$ by writing it above the arrow
as $A\xra{(P)} B$.
}\footnote{Given a property $P$ of morphisms in a category, it is desirable that each morphism
decomposes as 
$\bullet \xra{(P)^\lrl}\bullet\xra{(P)^\lr}\bullet$
and as $\bullet \xra{(P)^\rl}\bullet\xra{(P)^\rlr}\bullet$. 
The axiom $M2$ of closed model categories is of this form
where $P$ is the class of fibrations or cofibrations, and, for example, 
the connected components of a topological space $X$ fit into decomposition
$$        
X\xra{(\{0,1\}\lra \{0=1\})^\lrl} \pi_0(X)\xra{(\{0,1\}\lra \{0=1\})^\lr} \{0=1\} $$
where $(\{0,1\}\lra \{0=1\})$ denotes the (class consisting of the single) morphism
gluing together the two points of a discrete space of two points; a similar 
definition can be made in $\sFilth$ as well. 
Some details appear in
\href{https://mishap.sdf.org/6a6ywke.pdf}{[6,\S4.7]}.   
}\footnote{Such decomposition may fail to exist, particularly for set theoretic reasons 
if the property involved is a class, not a set, see the next footnote.}
 leads us to define an ``$ACF_0$-replacement'' of a model $N$ 
as the decomposition 
$$
N_\bullet\xra{\left\{\,\Bbb C _\bullet\lra\top\right\}^\lrl}\bullet\xra{\left\{\,\Bbb C _\bullet\lra\top\right\}^\lr}              \top
$$
or  a ``stable replacement'' as
$$
N_\bullet\xra{\left\{\,M _\bullet\lra\top\,\,:\,\, M\text{ a stable model}\right\}^\lrl}\bullet\xra{\left\{\, M _\bullet\lra\top\,\,:\,\, M\text{ a stable model} \right\}^\lr}              \top
$$
or ``indiscernible sequence'' replacement\footnote{Some explicit remarks can be made about this decomposition, 
which perhaps show it is of little interest. Typically a decomposition of form $\bullet_1\xra{(P)^\lr}\bullet_2\xra{(P)^\rlr}\bullet_3$
is constructed by a Quillen small object argument, which is a careful induction 
taking pushback of the $(P)^\rlr$ maps in the decompositions  $\bullet_1\xra{\text({arbitrary)}}\bullet\xra{(P)^\rlr}\bullet_3$;
such a pullback necessarily has property $(P)^\rlr$. In this case the ``indiscernability'' decomposition of $M_\bullet\lra\top$ 
can probably be done explicitly, by weakening the filter structure on $M_\bullet$
such that the map
$$  |I|^{\leq\text{tails}}_\bullet 
\lra M'_\bullet
$$
is $\sFilth$-continuous (i.e.~is an $\sFilth$-morphism) for  $I$ an indiscernible sequence with repetitions in $M$,
i.e.~an $\sFilth$-morphism
$$  I^{\leq\text{tails}}_\bullet 
\lra M_\bullet
$$}
$$
N_\bullet\xra{\left\{  I^{\leq\text{tails}}_\bullet 
  \lra |I|^\text{tails}_\bullet
      \right\}^\rl} 
\bullet
\xra{\left\{   |I|^{\leq\text{tails}}_\bullet 
 \lra  |I|^{\leq\text{tails}}_\bullet 
\right\}^\rlr}
\top
$$

\begin{question} Is it true that the stable part $St_M$ of a model $M$ can be defined in terms of 
one of the M2-decompositions above, i.e. by one of the equations
$$M_\bullet \xra {(P)^\rl} (St_M)_\bullet \xra {(P)^\rlr } \bot
$$
or 
$$M_\bullet \xra {(P)^\lrl} (St_M)_\bullet \xra {(P)^\lr } \bot
$$
for some nice class $P$ of morphisms ? 

Note that if this holds for any class, it does for 
$(P):=\{ M_\bullet \xra {} (St_M)_\bullet :\, M \text{ is a model}\} $ (top)
or $(P):=\{  (St_M)_\bullet\lra \top  :\, M \text{ is a model}\} $ (bottom),
and this constitutes a precise question. 
\end{question}

\subsubsection{Levels of stability as iterated Quillen negations} More  generally, we may think of the class 
$$\left\{ M_\bullet\,:\, (M_\bullet \lra \bot) \,\in\, \left\{\,N _\bullet\lra\top\right\}^\lr\right\}$$
is a class of models ``at the same level of stability as $N$ or lower'', i.e.~having ``better'' properties of indiscernible sequences. 
We may be more explicit about these properties and define instead  the class 
$$\left\{ M_\bullet\,:\, (M_\bullet \lra \bot) \,\in\, (I)^\rlr\right\}$$
where $(I)$ is a class of morphisms associated with linear orders. 
Of course, these is a lot of flexibility in this, e.g.~we may consider not $M_\bullet\lra\top$ but some other morphism associated 
with a model, as we do in NIP; or Quillen negations ${}^\text{lrr},{}^\text{rr},...$ iterated several times.

This is just one of the possible questions of this type. 

\begin{enonce}{Question} Can the definition of distality be expressed
in this way as Quillen negation, perhaps as a lifting property with respect to a class of morphisms associated with linear orders,
say for example as
$$( I_1+a+I_2+I_3)^\leq_\bullet \bigcup\limits_{(I_1+I_2+I_3)^\leq_\bullet} (I_1+I_2+b+I_3)^\leq_\bullet \lra (I_1+a+I_2+b+I_3)^\leq_\bullet 
\,\,\,\rtt\,\,\, M_\bullet \lra \top $$
in the notation of the definition of distality in \href{www.normalesup.org/~simon/Oleron_Simon.pdf}{[Simon, Oleron]}? 
Can the notion of domination of indiscernible sequences \href{arxiv.org/pdf/1604.03841.pdf}{[Simon, Type decomposition in NIP theories]}
or collapse of indiscernibles in \href{https://arxiv.org/pdf/1106.5153}{[Scow,Characterization of NIP theories by ordered graph-indiscernibles]} and
\href{https://arxiv.org/abs/1511.07245}{[GHS,Characterizing Model-Theoretic Dividing Lines via Collapse of Generalized Indiscernibles]})
be expressed in terms of Quillen lifting properties? 
Distality is thought of as a notion of pure
instability, hence we may ask whether the stable replacement of a distal model
is necessarily trivial?  

Are $p$-adics or say CODF ``distal-NIP-complete'' in the sense that the following is the class of all distal models: 
$$ \left\{\, M \, : M_\bullet \lra \bot\, \in \left\{ (\Bbb Q_p;+,*)_\bullet \lra \top \right\}^\text{lr}\right\}
$$
$$ \left\{\, M \, : M_\bullet \lra \bot\, \in \left\{ N_\bullet \lra \top \,:\, N\models \text{CODF}\,\right\}^\text{lr}\right\}
$$
\end{enonce}

\subsubsection{Simple theories and tree properties} We think answering 
the following question is important as it might give a technical clue towards homotopy theory for model theory.

\begin{enonce}{Question}\label{que:simple}
 Define the notion of a simple theory as an iterated Quillen negation.\footnote
{An answer is suggested in Appendix~\ref{app:9}, or rather a straightforward way to 
``read it off'' the definition of the tree property, and involves a slightly different definition of Stone space of a model; 
the filters on $|M|^n=|M|_\bullet(n_\leq)$ 
are defined to consist of tuples $(a_1,...,a_n)$ such that the set $\{\phi(x,a_1),...,\phi(x,a_n)\}$
is consistent. This suggests a variation of Question~\ref{que:stab_lr}: is a (saturated enough) model $M$ 
simple iff $M_\bullet\lra\top \,\in\,\left\{ (\text{Random graph)}_\bullet\lra \top \right\}^\text{lr}$.
However,  it still remains
to intrepret it as a clue towards homotopy theory.}
\end{enonce}
The motivation for singling out simplicity  is that its characterisation 
uses indiscernible trees: we think of such a tree as a {\em family of indiscernible sequences (its branches) compatible in some way}, 
and thus analogous to a compatible family of paths (or its negation(!)), i.e.~a homotopy between paths. 
Another coincidence is that a homotopy $I\times I\lra X$ or $I\lra (I\lra X)$ 
between paths involves two linear orders, and so does an indiscernible tree in a tree property NTP${}_{1,2}$. 
We hope that an answer to this question might help 
give a clue towards how to define a notion of 
the space of indiscernible sequences in a model,  a start of homotopy theory.

\subsubsection{Axiomatize non-abelian homotopy theory} We take the liberty to state a wild speculation 
that understanding 
dividing lines of Shelah in terms of diagram chasing may suggest an approach towards axioms of ``non-abelian'' homotopy theory.
Our motivation in stating that is not any positive evidence but rather that the technique of classification theory
is so alien and unknown in homotopy theory that any ideas there would be fresh and new in homotopy theory. 
Our motivation of thinking of a homotopy theory based on $\sFilth$ being non-abelian is elementary and not very convincing either:
the definition of a natural notion of homotopy in $\sFilth$ is not symmetric. 
Our motivation of mentioning the endomorphism $[+1]:\sFilth\lra\sFilth$ is that it appears in reformulations of 
the notions of being locally trivial as a base change diagram 
(see \href{https://mishap.sdf.org/Skorokhod_Geometric_Realisation.pdf}{[7,\S3.4]} for a sketch and
for details \href{http://mishap.sdf.org/6a6ywke/6a6ywke.pdf}{[6,\S2.2.5,\S4.8]}), of limit and compactness 
\href{http://mishap.sdf.org/6a6ywke/6a6ywke.pdf}{[6,\S2.1.4,\S4.11]}.

\begin{enonce}{Question}
Formulate axioms of homotopy theory
in terms of iterated Quillen lifting properties/negation 
and the ``shift'' endomorphism $[+1]:\sFilth\lra\sFilth$.
\end{enonce}

{\tiny
\subsubsection*{Acknowledgements} I thank M.Bays for many useful discussions, proofreading and generally 
taking interest in this work, without which this work is likely to have not happened. 
In particular, the definition of $M_\bullet$ appeared while talking to M.Bays. 
I thank Assaf Hasson for proofreading, without which the text would have remained unreadable (and therefore unread), 
and comments which, in particular, improved \S3.4 (Questions). I think Andres Villaveces for pointing out to 
the notion of graph indiscernibles, and Sebastien Vasey for pointing out to 
\href{https://arxiv.org/abs/1810.01513]}{[Boney, Erdos-Rado Classes, Theorem 6.8]}.
I thank Boris Zilber for proofreading, comments
and teaching me mathematics. I express gratitude to
friends for creating an excellent social environment in St.Petersburg, calls, and enabling me to work,
and to those responsible for the working environment which allowed me to pursue freely research interests. 


}


\newpage
\part*{Appendices}

The Appendices contain an assorie of examples and constructions not quite ready for publication, 
and are intended to provide some context and suggestions for future research. 

Appendix 4 contains an assortie of constructions of objects of $\sFilth$, notably a sketch of
a fully faithful inclusion $\Topp\subset \sFilth$. 

Appendix 5 contains reformulations of NIP, NOP, non-dividing, and considerations of NSOP. 

Appendix 6 sketches in category-theoretic language a constructions of $\sFilth$ objects 
motivated by Ramsey theory.

Appendix 7 presents speculations about a homotopy theory for $\sFilth$ non-trivial 
for the subcategory of models. 

Appendix 8 sketches in set-theoretic notation the definition of the class of models 
$\{(\Bbb C;+,*)_\bullet\lra\top\}^\lr$, and, more genererally, $\{M_\bullet\lra\top\}^\lr$
for a model $M$, appearing in Question~\ref{que:stab_lr}.

Appendix 9 is a preliminary exposition of the reformulation of the tree property
solving Question~\ref{que:simple}.

\section{Appendix. Examples of simplicial filters.}\label{ex:sfilth}

We sketch a number of constructions of simplicial filters, 
is a somewhat informal style. The aim is to provide context and intuitions 
for our constructions. 

Importantly, we explain how to view the topological spaces as objects of $\sFilth$.

\subsection{Examples of filters and their morphisms.} 
We list a number of examples of filters in the hope that some may aid the reader's intuition. 
The reader should skip examples they find unhelpful or uneasy to follow.

\subsubsection{Discrete and antidiscrete}  
The set of subsets consisting of $X$ alone
is a  filter on $X$ called the {\em antidiscrete} filter, and there is a functor 
$\cdot^\text{antidiscrete}: \Sets\lra\Filth$, $X\longmapsto X^\text{antidiscrete}$,
sending a set to itself equipped with antidiscrete filter $\{X\}$.
For us the set of all subsets of $X$ is also a filter which we call {\em discrete}.  

The functor $\Sets\lra \Filth$ induces a fully faithful embedding $\cdot^\text{antidiscrete}:\sSets\lra \sFilth$,
thus any simplicial set is also a simplicial filter.

\subsubsection{Neighbourhood filter}   In a topological space $X$, the {\em set of all neighbourhoods} of an arbitrary
subset $A$ of $X$ (and in particular the {set of all neighbourhoods}
of a point of $X$) is a filter, called the {\em neighbourhood filter} of $A$.
The {\em filter of coverings} on $X\times X$ consists of subsets of form 
$$\sqcup_{x\in X} \{x\}\times U_x$$
where $U_x\ni x$ is a (not necessarily open) 
neighbourhood of $x$, i.e. $(U_x)_{x\in X}$ is a covering of $X$. 

A map $f:X\lra Y$ of topological spaces is continuous iff
for every point $x\in X$ it induces a continuous map of filters 
from the neighbourhood filter of $x$ to the neighbourhood filter of $f(x)$,
or, equivalently, iff it induces a continuous map from the filter of coverings of $X$ on $|X|\times |X|$
to that of $Y$ on $|Y|\times |Y|$. 
\subsubsection{Cofinite filter} 
If $X$ is an 
set, the {\em complements of the finite subsets} of $X$ are the
elements of a filter. The filter of complements of finite subsets of the
set $\Bbb N$ of integers
$\geq 0$ is called the {\em Fr\'echet filter}.
\subsubsection{Filter of tails of a preorder} In a partial preorder $I^\leq$, the {\em filter of tails}  consists of sets
$\varepsilon\subset I$ with the following property: for every $i\in I$ there is $j\geq i$ 
such that for every $k\geq j$ it holds $k\in \varepsilon$. 
\subsubsection{Uniformly continuous maps}\label{def:filt:metr} Let $M$ be a metric space, and $n\geq 0$. 
A {\em uniform neighbourhood of the main diagonal} on $M^n$ is a subset 
which contain all tuples of diameter $<\epsilon$ for some $\epsilon>0$; 
equivalently, a subset $\varepsilon\subset M^n$
with the following property: 
there is $\epsilon>0$ such that for arbitrary tuple $(x_1,..,x_n)\in M^n$, 
it holds that tuple $(x_1,..,x_n)\in \varepsilon$ whenever for each $1\leq i,j\leq n$
$\dist(x_i,x_j)<\epsilon$.  Thus, the
 {\em filter of 
uniform neighbourhoods of the main diagonal} on $M^n$ 
is generated by the subsets
$$
\{\, (x_1,..,x_n)\in M^n\,: \dist(x_i,x_{i+1})<\epsilon, 0<i<n\} 
$$
For $n=2$ we may rewrite this filter as being generated by subsets
$$\bigsqcup_{x\in X} \{x\}\times B_{r<\epsilon}(x)$$
where $B_{r<\epsilon}(x)\ni x$ is the ball around $x$ of diameter $\epsilon$,
We may call it the {\em filter of uniform coverings} on $M\times M$, 
to emphasize similarity to topological spaces.

A map $f:M'\lra M''$ of metric spaces is uniformly continuous iff it induces 
a continuous map from the filter of uniform neighbourhoods of the main diagonal of $M'\times M'$
into that of $M''\times M''$. Indeed, continuity just says that for each $\epsilon>0$ 
and thereby ``$\epsilon$-neighbourhood $\varepsilon:=\{(x,y):x,y\in M'', \dist(x,y)<\epsilon\}$ 
of the main diagonal'' there is $\delta>0$ and 
thereby a ''$\delta$-neighbourhood $\updelta:=\{(x,y):x,y\in M', \dist(x,y)<\delta\}\subset M'\times M'$ 
of the main diagonal'' such that $f(\updelta)\subset\varepsilon$. 
In fact, the same holds for $M^n$ for any $n>1$ instead of $M\times M$. 

\subsubsection{A Cauchy sequence}\label{cau:seq} A Cauchy sequence is a map $a:\omega\lra |M|$ such that it induces 
a continuous map  $(\omega^\leq\times \omega^\leq)^\text{tails} \lra M\times M$ from the linear order $\omega^\leq\times\omega^\leq$ 
equipped with the filter of tails to $M'\times M'$ equipped with the filter of uniform neighbourhoods of the main diagonal. 
Indeed, continuity of the induced map $\bar a: (\omega^\leq\times \omega^\leq)^\text{tails} \lra M\times M$  just means that 
for each $\epsilon>0$ 
and thereby ``$\epsilon$-neighbourhood $\varepsilon:=\{(x,y):x,y\in M, \dist(x,y)<\epsilon\}$ 
of the main diagonal'' there is $N>0$ and 
thereby a ''$N$-tail'' neighbourhood $\updelta:=\{(i,j)\in \omega\times \omega : i,j>N\}$ 
such that $a(\updelta)\subset\varepsilon$, i.e.~for each $i,j>N$ it holds $\dist(a_i,a_j)<\epsilon$.

In fact, the same holds for $M^n$ for any $n>1$ instead of $M\times M$, i.e.~for each $n>1$ it holds that 
a map  $a:\omega\lra |M|$ represents a Cauchy sequence iff the induced map $((\omega^\leq)^n)^\text{tails}\lra M^n$ is continuous,
where $M^n$ is equipped with the filter of  uniform neighbourhoods of the main diagonal.

Later we shall see that the fact this holds for each $n$ means that a Cauchy sequence gives rise 
a morphism in $\sFilth$ from a certain object associated with the linear order $\omega$ 
to a certain object associated with metric space $M$. ...

\subsubsection{EM-filters and indiscernible sequences: our main example}\label{EMfilter:def:example}
This is a sketch of our main model-theoretic construction in $\sFilth$. 
We will give details in Definition~\ref{def:Mb}.

\def\phinEM{\phi^{n\text{-EM}}}
A {\em EM-formula} is a formula of form 
$$
  \bigwedge_{1\leq s<t\leq r}  x_{i_s}\neq x_{i_t} \& x_{j_s}\neq x_{j_t} \implies 
\left( \phi(x_{i_1},..,x_{i_r}) \leftrightarrow \phi(x_{j_1},..,x_{j_r}) \right)
$$
for some formula $\phi(x_1,...,x_r)$, $1\leq i_1<...<i_r\leq n$, $1\leq j_1<...<j_r\leq n$.

For an $r$-ary formula $\phi$, let $\phinEM$ denote
$$\bigwedge_{1\leq i_1<...<i_r\leq n, 1\leq j_1<...<j_r\leq n} \left(
  \bigwedge_{1\leq s<t\leq r}  x_{i_s}\neq x_{i_t} \& x_{j_s}\neq x_{j_t} \implies 
\left( \phi(x_{i_1},..,x_{i_r}) \leftrightarrow \phi(x_{j_1},..,x_{j_r}) \right)
\right) $$

Note that a sequence of distinct elements is indiscernible iff its EM-type contains all the EM-formulas.

The {\em $\phi$-EM-filter on $M^n$} is the filter generated by the subset $\phinEM(M^n)$;
for a set of formulas $\Sigma$, the  {\em $\Sigma$-EM-filter on $M^n$} is generated by the subsets
$\phinEM(M^n)$, $\phi\in \Sigma$. 
The {\em $\Sigma$-EM${}^\infty$-filter on $M^n$} is the filter generated by arbitrary intersections of sets of form 
 $\{\underline x: M\models \phinEM(\underline x)\}$, $\phi\in\Sigma$. 
We omit $\Sigma$ when $\Sigma$ is the set of all formulas in the language of $M$.

The EM-filter on $M$ is 
 antidiscrete: the only neighbourhood is the whole set. 
 On $M\times M$ it is generated by equivalence relations $\psi(x_1)\leftrightarrow \psi(x_2)$, for $\psi$ an arbitrary 1-ary formula,
and thus (together with the two coordinate projections $|M|\times |M|\lra |M|$)
carries the same information as the usual topological Stone space of $1$-types.
The EM-filter on $M\times M\times M$ is the first non-trivial filter. The EM-filters on $M^n,n>0$ capture
 the notion of an indiscernible sequence in the following way: a sequence $(a_i)$ of distinct elements 
is indiscernible
iff every finite subsequence $a_{i_1},...,a_{i_n},i_1<...<i_n$ belongs to every EM-neighbourhood in $M^n$.
In terms of the category $\Filth$ of filters this is rewritten as follows:
\bi
\item an injective map $I^\leq \lra M$ is an indiscernible sequence iff 
for each $n>0$ it induces a continuous map 
$$\homm{\text{preorders}}{n_\leq} I^\text{antidiscrete} \lra M^n
$$
\ei
For $n=3$ this means there is a continuous map 
$$\{(i,j,k)\in I\times I\times I: i\leq j\leq k\}^\text{antidiscrete} \lra M\times M\times M$$

\subsection{Examples of simplicial filters.}\label{examples:sF}
We list a number of examples of simplicial filters in the hope that some may aid the reader's intuition. 
Some are indented for a category theoretically minded reader.  
The reader should skip examples they find unhelpful or uneasy to follow. 

\subsubsection{Discrete, antidiscrete and the filter of main diagonals on a simplicial set}
The functor $\cdot^\text{antidiscrete}:\Sets\lra \Filth$ induces a fully faithful embedding $\cdot^\text{antidiscrete}:\sSets\lra \sFilth$,
thus any simplicial set is also a simplicial filter. 

Let  $X_\bullet:\Dop\lra \Sets$ be a  simplicial set. For each $n>0$, equip  $ X_\bullet(n_\leq)$ with the filter generated by the image 
of the map $X_\bullet(1_\leq)\lra X_\bullet(n_\leq)$ corresponding to the unique morphism  $n_\leq\lra 1_\leq$ in $\Delta$. 
A verification shows that by functoriality all the maps $X_\bullet(m_\leq)\lra X_\bullet(n_\leq)$, $n_\leq\lra m_\leq$, 
are continuous, and that this construction defines a functor $\Dop\lra \Filth$, and, in fact, 
a fully faithful embedding  $\cdot^\text{diag}:\sSets\lra \sFilth$.

\subsubsection{Corepresented simplicial sets}
The underlying simplicial sets of most of the examples will be variations of the following well-known construction. 

Let $C$ be a category. To each object $Y\in \Ob C$
there correspond a functor $h_Y : X \longmapsto \homm C {X}{Y}$
 sending each object $X\in \Ob C$ into the set of  morphisms from $X$ to $Y$,
and it can be checked that this defines a fully faithful embedding 
$C\lra Func(C^\text{op},\Sets)$. A functor $h_Y:C\lra \Sets$ of this form is called 
{\em corepresented by $Y$}. 

A {\em simplicial set $M_\bullet:\Dop\lra\Sets$ 
co-represented by} a set $M $ is 
the functor sending each finite linear order $n_\leq$ into the set of maps from $n$ to $M$:
$$
n_\leq \longmapsto \homm{\text{Sets}} {n} {M} = \left\{\,(x_1,...,x_n)\in M^n\right\}=M^n
$$
A map $[i_1\leq ...\leq i_n]:n_\leq\lra m_\leq$ by composition induces a map 
$$\homm{\text{Sets}} {m} {M} \lra \homm{\text{Sets}} {n} {M}$$
$$ (x_1,...,x_n)  \longmapsto (x_{i_1},...,x_{i_n})$$

%

A map $f:M'\lra M''$ of sets induces a natural transformation of functors $f_\bullet:M'_\bullet\lra M''_\bullet$:
for each $n>0$, a tuple $(x_1,..,x_n)\in M'^n=M'_\bullet(n_\leq)$ goes into tuple $(f(x_1),...,f(x_n))\in M''^n=M''_\bullet(n_\leq)$. 
Moreover, every natural transformation $f_\bullet:M'_\bullet\lra M''_\bullet$ is necessarily of this  form,
as the following easy argument shows. Let $(y_1,..,y_n)=f_n(x_1,..,x_n)$; by functoriality using maps $[i]:1\lra n, 1\mapsto i$ 
we know that $y_i=(y_1,..,y_n)[i]=f_n(x_1,..,x_n)[i]=f_1(x_i)$. In a more geometric language, we may say 
that we used that each ``simplex'' $(y_1,..,y_n)\in M''^n$ is uniquely determined by its ``$0$-dimensional faces'' $y_1,...,y_n\in M'$.

In the category-theoretic language, the facts above are expressed by saying that 
we get a fully faithful embedding $\cdot_\bullet:\Sets\lra \sSets$.

\subsubsection{Metric spaces and the filter of uniform neighbourhoods of the main diagonal}\label{def:sMetr}
Let $M$ be a metric space. 
Consider the {\em simplicial set $|M|_\bullet:\Dop\lra\Sets$ 
corepresented by} the set $|M|$ of points of $M$ defined above, i.e. ~the functor
$$
n_\leq \longmapsto \homm{\text{Sets}} {n} {M} = \left\{\,(t_1,...,t_n)\in M^n\right\}=M^n
$$
Now equip  $|M|_\bullet(n_\leq)=|M|^n$ with the filter of uniform neighbourhoods of the main diagonal.
Remarks in \S{def:filt:metr}  above about uniform continuity imply 
that a map $f:|M'|\lra |M''|$ is uniformly continuous iff it induces a natural transformation
$f_\bullet:M'_\bullet\lra M''_\bullet$ of functors $\Dop\lra \Filth$. 

Thus we see that the category of metric spaces and uniformly continuous maps is a fully faithful subcategory of $\sFilth$. 

In fact, the definition \href{http://mishap.sdf.org/tmp/Bourbaki_General_Topology.djvu#page=15}{[Bourbaki, II\S I.1,Def.I]} 
 of a uniform structure in  can be phrased in the language of $\sFilth$ as follows:

\begin{lemm}[\href{http://mishap.sdf.org/tmp/Bourbaki_General_Topology.djvu\#page=15}{[Bourbaki, II\S I.1,Def.I]}]
A uniform structure (or uniformity) on a set $X$ is a structure 
given by a filter $\mathfrak U$ of subsets of $X \times  X$%
such that there is an object $X_\bullet:\Dop\lra \Filth$ of $\sFilth$
which satisfies                                                                                    
the following properties:                                 
\bi
\item[(V${}_\text{I}$)]  (``Every set belonging to U contains the diagonal $\Delta$.'')\newline 
The filter on $X_\bullet(1_\leq)$ is antidiscrete, i.e.~is $\{|X_\bullet(1_\leq)|\}$.
\item[($V_\text{II}$)] (``If $V \in \mathfrak U$ then $V^{-1} \in \mathfrak U$.'') \newline
The functor $X_\bullet$ factors as $$X_\bullet:\Dop\lra \text{FiniteSets}^\text{op}\lra \Filth$$ 
\item[($V_\text{III}$)] (``For each $V \in \mathfrak U$ there exists $W \in \mathfrak U$ such that $W\circ W \subset V$.'')\newline
 for $n>2$   $|X|_\bullet(n_\leq)=|X|^n$ is equipped with the coarsest filter such that the maps
$X^n\lra X\times X, (x_1,..,x_n)\mapsto (x_i,x_{i+1})$, $0<i<n$, of filters are continuous
\ei
\end{lemm}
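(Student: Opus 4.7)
The plan is to prove this as a direct translation, in each direction, between Bourbaki's three axioms on $\mathfrak U$ and the $\sFilth$-axioms (V${}_\text{I}$), (V${}_\text{II}$), (V${}_\text{III}$) on $X_\bullet$. Given $\mathfrak U$, I construct $X_\bullet$ by declaring its underlying simplicial set to be $|X|_\bullet$ corepresented by $|X|$, equipping $X_\bullet(1_\leq)=|X|$ with the antidiscrete filter, identifying $X_\bullet(2_\leq)=|X|\times|X|$ with the filter $\mathfrak U$, and for $n>2$ equipping $X_\bullet(n_\leq)=|X|^n$ with the coarsest filter making the consecutive face maps $[i,i{+}1]\colon |X|^n\lra |X|\times|X|$, $0<i<n$, continuous into $\mathfrak U$. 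Conversely, from $X_\bullet$ satisfying (V${}_\text{I}$)--(V${}_\text{III}$) one extracts $\mathfrak U:=X_\bullet(2_\leq)$.

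Next I would translate each Bourbaki axiom by unpacking continuity of an explicit face map. For (V${}_\text{I}$) and the ``diagonal'' axiom: the map $[1,1]\colon 2_\leq\lra 1_\leq$ induces $|X|\lra|X|\times|X|$, $x\longmapsto(x,x)$; continuity from the antidiscrete filter $\{|X|\}$ on $X_\bullet(1_\leq)$ into $\mathfrak U$ says exactly that the preimage of every $V\in\mathfrak U$ equals $|X|$, i.e.\ every $V$ contains $\{(x,x):x\in|X|\}$. For (V${}_\text{II}$) and the ``inverse'' axiom: factoring $X_\bullet$ through $\text{FiniteSets}^\text{op}$ forces the transposition $(x,y)\longmapsto(y,x)$ to induce a continuous self-map of $\mathfrak U$, which is exactly $V^{-1}\in\mathfrak U$. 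For (V${}_\text{III}$) and the ``composition'' axiom: the coarsest filter on $|X|^3$ making $[1,2]$ and $[2,3]$ continuous is generated by
\[
\{(x_1,x_2,x_3):(x_1,x_2)\in W'\text{ and }(x_2,x_3)\in W''\},\quad W',W''\in\mathfrak U,
\]
so continuity of $[1,3]\colon|X|^3\lra|X|^2$ with respect to this filter says exactly that for each $V\in\mathfrak U$ there exist $W',W''\in\mathfrak U$ with $W'\circ W''\subset V$; taking $W:=W'\cap W''\in\mathfrak U$ recovers the familiar condition $W\circ W\subset V$.

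The main obstacle I anticipate is verifying that the ``coarsest filter'' definition on $|X|^n$ for $n>2$ in fact extends to a functor $X_\bullet\colon\Dop\lra\Filth$, and moreover one factoring through $\text{FiniteSets}^\text{op}$. Concretely, for every monotone $[i_1\leq\cdots\leq i_m]\colon n_\leq\lra m_\leq$ (and indeed every map of finite sets, once (V${}_\text{II}$) is in force) I need continuity of the induced map $|X|^n\lra|X|^m$; this should reduce by a simplicial diagram chase to continuity of the consecutive face maps $[i,i{+}1]\colon|X|^n\lra|X|^2$ together with the symmetric group action, via writing $[i_1\leq\cdots\leq i_m]$ as a composition of elementary face maps and bijections. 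In the converse direction, the remaining verification is that the filter $\mathfrak U=X_\bullet(2_\leq)$ extracted from such an $X_\bullet$ is a bona fide filter on $|X|\times|X|$ satisfying Bourbaki's axioms: the filter property holds because $X_\bullet(2_\leq)$ is already a filter in $\Filth$, and the three axioms are precisely the content of (V${}_\text{I}$)--(V${}_\text{III}$) as just translated.
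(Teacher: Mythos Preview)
Your proposal is correct and matches the paper's own proof sketch essentially step for step: the diagonal map $[1,1]$ for axiom (V${}_\text{I}$), the transposition coming from the factorisation through $\text{FiniteSets}^\text{op}$ for (V${}_\text{II}$), and the explicit description of the coarsest filter on $|X|^3$ together with continuity of $[1,3]$ and the choice $W:=W'\cap W''$ for (V${}_\text{III}$). You add the functoriality verification and the converse direction, which the paper leaves implicit in its sketch, but the core argument is the same.
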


\subsubsection{Topological spaces and the filter of coverings}\label{brb:propo2}
This example is slightly technically complicated and should be skimmed at first reading. 
Let $X$ be a topological space.  
Consider the {\em simplicial set $|X|_\bullet:\Dop\lra\Sets$ 
corepresented by} the set $|X|$ of points of $X$.
Now equip  $|X|_\bullet(1_\leq)=|X|$ with the antidiscrete filter $\{X\}$;
equip  $|X|_\bullet(2_\leq)=|X|^2$ with the filter of coverings; 
for $n>2$ equip   $|X|_\bullet(n_\leq)=|X|^n$ with the coarsest filter such that the map 
$X^n\lra X\times X, (x_1,..,x_n)\mapsto (x_i,x_{i+1})$, 
$0<i<n$, of filters are continuous. A verification shows that this construction does indeed 
define a simplicial filter $X_\bullet:\sFilth$.

In fact, axioms of topology in terms of neighbourhoods \href{http://mishap.sdf.org/tmp/Bourbaki_General_Topology.djvu#page=15}{[Bourbaki,I\S1.2: (V${}_\text{I}$)-(V$_{\text{IV}}$), Proposition 2]} may be interpreted as saying
this construction does define an object of $\sFilth$, see \href{http://mishap.sdf.org/6a6ywke.pdf}{[6,3.1.2]}
for details. We paraphrase [Bourbaki, I\S1.2, Proposition 2] giving the axioms of topology
in terms of neighbourhoods:

\begin{lemm}[{[Bourbaki, I\S1.2, Proposition 2]}]\label{Brb:Prop2:lemm} If to each element $x$ of a set $X$ there corresponds a set 
$\mathfrak N (x)$ 
of subsets of $X$, 
and there is an object $X_\bullet$ of $\sFilth$ such that
\bi\item its underlying simplicial set is $|X|_\bullet$ 
\item[(V${}_\text{III}$)] (``The element $x$ is in every set of $\mathfrak N(x)$.'')\newline 
 $|X|_\bullet(1_\leq)=|X|$ is equipped with the antidiscrete filter $\{X\}$
\item[(V${}_\text{I}$)-(V${}_\text{II}$)] 
( (V${}_\text{I}$) ``Every subset of $X$ which contains a set belonging to $\mathfrak N (x)$ itself belongs
to $\mathfrak N(x)$. \newline
(V${}_\text{II}$) ``Every finite intersection of sets of $\mathfrak N(x)$ belongs to $\mathfrak N(x)$.'')\newline
  $\left\{ \bigcup_{x\in X} \{x\}\times U_x: U_x\in \mathfrak N(x) \right\}$ is a filter on $X\times X$
\item  $|X|_\bullet(2_\leq)=|X|\times |X|$ is equipped with this  filter
\item [(V${}_\text{IV}$)] (``If $V$ belongs to $\mathfrak N(x)$, then there is a set $W$ belonging to
$\mathfrak N(x)$ such that, for  each $y\in W$, $V$ belongs to $\mathfrak N(y)$.'')\newline
for $n>2$   $|X|_\bullet(n_\leq)=|X|^n$ is equipped with the coarsest filter such that the maps
$X^n\lra X\times X, (x_1,..,x_n)\mapsto (x_i,x_{i+1})$, $0<i<n$, of filters are continuous
\ei
then there is a unique topological structure on $X$ such that, for each $x \in X$, 
$\mathfrak N(x)$ is the set of neighbourhoods of $x$ in this topology.
\end{lemm}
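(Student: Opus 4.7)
The plan is to verify that the four $\sFilth$-hypotheses on $X_\bullet$ unpack, axiom by axiom, into Bourbaki's classical neighbourhood axioms (V$_\text{I}$)--(V$_\text{IV}$), and then invoke the original [Bourbaki, I\S1.2, Proposition 2] for both existence and uniqueness of the topology. The strategy is the same as in the sketch for uniform structures given just above in the paper: the data of a simplicial filter on a corepresented simplicial set is rich enough to encode the neighbourhood axioms by imposing continuity of the various face maps.

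First I would handle (V$_\text{III}$): functoriality of $X_\bullet:\Dop\to\Filth$ forces continuity of the degeneracy map induced by $[1,1]:2_\leq\to 1_\leq$, which is the diagonal $|X|\to |X|\times |X|$, $x\mapsto(x,x)$. Since the source filter on $|X|_\bullet(1_\leq)$ is antidiscrete by hypothesis, continuity with respect to the filter on $|X|\times|X|$ described in the statement says that every generator $\bigcup_y\{y\}\times U_y$ contains the diagonal, so $x\in U_x$ for every $U_x\in\mathfrak{N}(x)$. Axioms (V$_\text{I}$) and (V$_\text{II}$) are then immediate: the hypothesis that $\{\bigcup_x\{x\}\times U_x\}$ is a filter on $X\times X$ restricts coordinate by coordinate to closure of each $\mathfrak{N}(x)$ under supersets and under finite intersections.

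The main step, and the main obstacle, is (V$_\text{IV}$). It corresponds to continuity of the face map $[1,3]:|X|^3\to|X|^2$ with respect to the coarsest filter on $|X|^3$ making both $[1,2]$ and $[2,3]$ continuous. Unwinding, given a generator $\bigcup_x\{x\}\times V_x$ of the filter on $|X|^2$, its preimage under $[1,3]$ must contain an intersection of preimages under $[1,2]$ and $[2,3]$ of two such generators, say $\bigcup_x\{x\}\times W'_x$ and $\bigcup_x\{x\}\times W''_x$. Reading off the triples $(x_1,x_2,x_3)$ witnessing membership, this says: for each $V\in\mathfrak{N}(x)$ there are $W',W''\in\mathfrak{N}(x)$ such that whenever $y\in W'\cap W''$, one has $V\in\mathfrak{N}(y)$. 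Setting $W:=W'\cap W''$, which lies in $\mathfrak N(x)$ by (V$_\text{II}$), yields (V$_\text{IV}$). The bookkeeping of quantifiers here, and passing from two generators to one via intersection, is exactly the technical subtlety, carried out identically in the sketch of (U$_\text{III}$) for uniform structures above.

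Having translated all four axioms, the classical [Bourbaki, I\S1.2, Proposition 2] gives a unique topology on $X$ whose neighbourhood filter at each $x$ equals $\mathfrak{N}(x)$. Uniqueness requires no further argument since a topology on a set is determined by its system of neighbourhood filters: the open sets are precisely the subsets $U\subset X$ such that $U\in\mathfrak{N}(x)$ for every $x\in U$.
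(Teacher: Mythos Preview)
Your approach is correct and matches the paper's: the paper states this lemma in the Appendix without a self-contained proof, but its argument for (V$_\text{IV}$) appears in the caption of Figure~1 (in \S\ref{intro:def:sMetr}), where it is phrased exactly as you do --- continuity of $[1,3]$ with respect to the coarsest filter making $[1,2]$ and $[2,3]$ continuous is identified with Bourbaki's axiom (V$_\text{IV}$), and then one cites the original Proposition~2.

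There is one bookkeeping slip in your (V$_\text{IV}$) step, which comes from transplanting the uniform-structure argument too literally. In the uniform case both $W'$ and $W''$ live in the single filter $\mathfrak U$, so intersecting them makes sense. In the topological case the generators are families $(W'_z)_z$ and $(W''_z)_z$ indexed by points. Unwinding the inclusion
\[
[1,2]^{-1}\Bigl(\bigcup_z\{z\}\times W'_z\Bigr)\ \cap\ [2,3]^{-1}\Bigl(\bigcup_z\{z\}\times W''_z\Bigr)\ \subset\ [1,3]^{-1}\Bigl(\bigcup_z\{z\}\times V_z\Bigr)
\]
at a fixed first coordinate $x$ gives: for every $y\in W'_x$ one has $W''_y\subset V_x$, hence $V_x\in\mathfrak N(y)$ by (V$_\text{I}$). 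So the witness for (V$_\text{IV}$) is simply $W:=W'_x$; the set $W''_x\in\mathfrak N(x)$ plays no role, and the intersection $W'\cap W''$ is spurious. The second family $(W''_y)_y$ enters only to certify $V_x\in\mathfrak N(y)$ for each $y\in W'_x$. Once this is corrected, your argument goes through and is indeed the paper's intended one.
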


This defines a fully faithful embedding $\Topp\lra \sFilth$; in fact it has an inverse
$\sFilth\lra\Topp$. \href{https://mishap.sdf.org/Skorokhod_Geometric_Realisation.pdf}{[7,\S2.6.1]}.

\subsubsection{Linear order and the filter of tails}
Let $I^\leq$ be a linear order. Consider the {\em simplicial set
 $I_\bullet^\leq:\Dop\lra\Sets$ 
corepresented by} the linear order $I^\leq $
defined  as the functor 
$$
n_\leq \longmapsto \homm{\text{preorders}} {n_\leq} {I^\leq} = \left\{\,(t_1,...,t_n)\in I^n: t_1\leq ...\leq t_n \right\}
$$
Morphisms are defined similarly to the above.
Now equip  $I^\leq_\bullet(n_\leq)$ with the filter of tails,
where we consider $I^\leq_\bullet(n_\leq)$ equipped with tuples ordered element-wise: 
$(t'_1,...,t'_n)\leq (t''_1,...,t''_n)$ if for each $i$ $t'_i\leq t''_i$.

This defines a functor $I_\bullet^{\leq\text{tails}}:\Dop\lra \Filth$, 
i.e.~an object of $\sFilth$. 

In the same way we may define the filter of tails on the simplicial filter corepresented by $|I|$ as a set.
In more details, let $|I|_\bullet^{\leq\text{tails}}:\Dop\lra \Filth$ denote the simplicial filter whose underlying sset 
$$
n_\leq \longmapsto \homm{\text{Sets}} {n_\leq} {I^\leq} = |I|^n 
$$
is equipped with the {\em filter of tails} generated by, the subsets
$\{ (i_1,...,i_n): i_1\geq i_0,...,i_n\geq i_0 \}\subset |I|^n$, for $i_0\in |I|$.

The identity map defines an inclusion  $I_\bullet^{\leq\text{tails}} \hookrightarrow |I|_\bullet^{\leq\text{tails}}$ in $\sFilth$.

\subsubsection{Cauchy sequences and indiscernible sequences}  
In \S\ref{cau:seq}  we saw that 
a Cauchy sequence is a map $a:\omega\lra |M|$ such that it induces
a continuous map  $((\omega^\leq)^n)^\text{tails} \lra M^n$ from the linear order $(\omega^\leq)^{n\leq}$
equipped with the filter of tails to $M^n$ equipped with the filter of uniform neighbourhoods of the main diagonal.
In the notation of the previous example, it says that  a Cauchy sequence is a map $a:\omega\lra |M|$
such that it induces an $\sFilth$-morphism $|I|_\bullet^{\leq\text{tails}} \lra M_\bullet$, 
and therefore also 
a map $I_\bullet^{\leq\text{tails}} \lra M_\bullet$. In fact each morphism of the underlying simplicial sets
of these sfilters is induced by a map $\omega\lra |M|$ of sets, and hence we get a definition of a Cauchy sequence in $\sFilth$:
\bi
\item a Cauchy sequence is a morphism $I_\bullet^{\leq\text{tails}} \lra M_\bullet$
\item \ \ \ \ or, equivalently, 
\item a Cauchy sequence is a morphism $|I|_\bullet^{\leq\text{tails}} \lra M_\bullet$
\ei


We shall later see an {\em eventually indiscernible sequence} in a model $M$
is an injective morphism $I^{\leq\text{tails}}_\bullet\lra M_\bullet$ to a certain object of $\sFilth$ associated with model $M$.
This is the observation which started these notes.

\subsubsection{A glossary of our notation} 
For a natural number $k\in \NN$, by $k_\leq $ we denote the linearly ordered set $1<2<..<k$ with $k$ elements 
viewed as an object of $\Dop$; a morphism $\theta:k_\leq \lra l_\leq $ in $\Delta$ 
we denote by $[i_1\leq ... \leq i_k]$ where $1\leq i_1=\theta(1)\leq ... i_k=\theta(k)\leq l $, and 
 $[i..j]$ is short for $[i\leq i+1\leq ..\leq j]$. 

By $X_\bullet,Y_\bullet,..$ we shall usually denote objects of $\sFilth$ or, rarely, another category of functors;
often $X$,$Y$,.. is some mathematical object (a linear order, a model, a topological or metric space, ...) and 
$X_\bullet, Y_\bullet,...$, possibly with a superscript, denotes the object of $\sFilth$ corresponding to $X,Y,..$;
the superscript may indicate the nature of the correspondence.
Sometimes we write $X_\bullet:\sFilth$ 
to indicate that $X_\bullet$ is an object of $\sFilth$.

For $X_\bullet:\sFilth$, by $X_\bullet(n_\leq)=X(n_\leq)=X_{n-1}$
we denote the functor $X_\bullet$ evaluated at $n_\leq$; 
elements of $X_\bullet(n_\leq)=X(n_\leq)=X_{n-1}$ are called {\em $(n-1)$-simplicies}.
 
By functoriality a non-decreasing map $[i_1\leq ... \leq i_k]:k_\leq\lra l_\leq$ 
and a $(l-1)$-simplex $x\in X_\bullet(l_\leq)=X_{l-1}$ determine a $(k-1)$-simplex 
$x[i_1\leq ... \leq i_k]\in X_\bullet(l_\leq)= X_{k-1}$ called a {\em $(k-1)$-dimensional face of $x$}, 
and, sometimes by abuse of language, the {\em $(k-1)$-dimensional face of $x$ with vertices or coordinates 
$i_1\leq ... \leq i_k$}. 

A $k$-simplex $x\in X_k$ is {\em degenerate} iff it is a  face of some $l$-simplex of smaller dimension $l<k$.

A $k$-simplex of form $x[i_1\leq ...\leq i_r\leq i_r\leq ... \leq i_k]\in X_k$ is necessarily degenerate,
as it is a face of $(k-1)$-simplex $x[i_1\leq ... \leq i_k]\in X_{k-1}$, as seen by equality
 $x[i_1\leq ...\leq i_r\leq i_r\leq ... \leq i_k]=(x[i_1\leq ...\leq i_r\leq ... \leq i_k])[1 \leq .. \leq r\leq r \leq ... \leq k]$.

By $\hom X Y:=\homSets X Y$ or $\homm {C} X Y:=\hommSets C X Y$ we denote the set of morphisms from $X:C$ to $Y:C$ in a category $C$.
By $\hom - Y$ or $\homm C - Y$ we denote the functor $C^{\text{op}}\lra \Sets$, $X\longmapsto \homm C X Y$ 
hence $\Delta_{N-1}$, $\hom - {N_\leq}$ and $\homm {\text{preorders}} - {N_\leq}$ denote the $(N-1)$-dimensional simplex (as a simplicial set). 
By $\Hom X Y:=\HomSets X Y$ or $\Homm {C} X Y:=\HommSets C X Y$ we denote the inner hom from $X:C$ to $Y:C$ in a category $C$ 
whenever it is defined. 
 
 
Thus $\Homm {\sSets} X Y$ denotes the inner hom in the category of simplicial sets, and thereby 
 $$\Homm {\sSets} X Y(1_\leq)={\Homm {\sSets} X Y}_{0}=\homm{\sSets} X Y=\hommSets{\sSets} X Y $$ 
denotes the set of morphism from $X$ to $Y$. 
 
We put in quotation marks words intended to aid intuition but formally unnecessary; thus formally 
 $x\in \delta$ and ``$\delta$-small'' $x\in \delta$ mean the same. 
 

\section{Appendix. NIP, NOP, and non-dividing.}\label{nipnop}

We sketch definitions of NIP and OP by lifting properties, and discuss NSOP. 
For completeness we also include a somewhat different exposition of stability. 
The exposition here is sketchy and preliminary.

The category $\Delta$ of finite linear orders has an endofunctor 
 which ``shifts dimension'' $n \longmapsto n+1$. 
The category  $\sFilth=Funct(\Dop,\Filth)$ 
is a category of functors on $\Dop$, and therefore an endofunctor $\Delta\lra \Delta$ of  category $\Delta$ of finite linear orders 
induces  an endofunctor $\sFilth\lra \sFilth$. 
In topology \href{http://mishap.sdf.org/6a6ywke.pdf}{[6,\S2.1.3-4,\S4.8-10]}  the ``local'' notions of limit and local triviality 
are expressed of an endfunctor of $\Delta$ ``shifting dimension'' sending $n_\leq\longmapsto (n+1)_\leq$.  

We use this endofunctor of $\Delta$ and somewhat cumbersome modifications of the topological definitions 
to define NIP and non-dividing. The lifting property defining NOP is analogous to the finite cover property defining compactness.

\subsection{Stability as a Quillen negation analogous to a path lifting property}

Now we may reformulate the characterisation of stability that ``each infinite indiscernible sequence
is necessarily an indiscernible set'' 
as a Quillen lifting property/negation
\footnote{
A morphism $i:A\to B$ in a category has {\em the left lifting property} with respect to a
morphism $p:X\to Y$, and $p:X\to Y$ also has {\em the right lifting property} with respect to $i:A\to B$,        
denoted  $i\rtt p$,
 iff for each 
       $f:A\to X$ and                                                                                                                           
       $g:B\to Y$
        such that                                                                                                                     
$p\circ f=g\circ i$
there exists                                                                                                         
       ${ h:B\to X}$ such that                                                                                                                     
       ${h\circ i=f}$ and                                                                                                                         
       ${p\circ h=g}$.
This notion is used to define properties of morphisms starting from an explicitly given class of morphisms,
often a list of (counter)examples,
and a useful intuition is to think that the property of left-lifting against a class $C$ is a kind of negation
of the property of being in $C$, and that right-lifting is also a kind of negation.
See~\href{https://en.wikipedia.org/wiki/Lifting_property}{[Wikipedia,Lifting\_property]} for details and examples. 
}


\subsubsection{Simplicial filters associated with linear orders and filters}

\def\IleqFb{I^{\leq,\FFF}_\bullet}
\def\Ileqtailsb{I^{\leq\text{tails}}_\bullet}
\def\iIiFb{|I|_\bullet^\FFF}
\def\iIitailsb{|I|_\bullet^\text{tails}}
\begin{defi}[$\iIiFb,\Ileqtailsb, \iIitailsb:\sFilth$]

Let $I$ be a linear order, and let $\FFF$ be a filter on $|I|$.  

Let $\iIiFb$ be the simplicial filter whose underlying simplicial set is corepresented by the set $|I|$, 
i.e.~the functor $\homm{\Sets}-{|I|}:\Dop\lra\Sets$. 
The set $$\homm{\Sets}{n_\leq}{|I|}=\{(t_1,...,t_n)\in |I|^n\}=|I|^n$$
is equipped with the filter generated by sets of the form  $\varepsilon^n$, $\varepsilon \in \FFF$.

Let $\IleqFb$ be the simplicial filter whose underlying simplicial set is corepresented by the linear order $I$, (fixme: better? corepresented by $I$ considered as a linear order?)  
i.e.~the functor $\homm{preorders}-{I^\leq}:\Dop\lra\Sets$. 
The set $$\homm{preorders}{n_\leq}{I^\leq}=\{(t_1,...,t_n)\in |I|^n\,:\, t_1\leq ... \leq t_n\}$$
is equipped with the filter generated by sets of the form 
$$\{(t_1,...,t_n)\in \varepsilon^n\,:\, t_1\leq ... \leq t_n\}, \ \ \varepsilon \in \FFF$$

Let  $\Ileqtailsb:=\IleqFb$, $\iIitailsb:=\iIiFb$  for $\FFF:=\{\{x:x\geq i\}:i\in I\}$ the filter generated by non-empty final segments of $I$.

\end{defi}

\subsubsection{Simplicial filters associated with structures (fixme: models?))}
\def\phinEM{\phi^{n\text{-EM}}}
\def\phimEM{\phi^{m\text{-EM}}}
First we need some preliminary notation. For a formula $\phi(x_1,..,x_r)$ of arity $r$ 
and a natural number $n>0$, let $\phinEM$ be the $n$-ary formula
$$\bigwedge_{1\leq i_1<...<i_r\leq n, 1\leq j_1<...<j_r\leq n} \left(
  \bigwedge_{1\leq s<t\leq r}  x_{i_s}\neq x_{i_t} \& x_{j_s}\neq x_{j_t} \implies 
\left( \phi(x_{i_1},..,x_{i_r}) \leftrightarrow \phi(x_{j_1},..,x_{j_r}) \right)
\right) $$

The formula $\phinEM(a_1,...,a_n)$ says that each subsequence of $a_1,..,a_n$ with {\em distinct} 
elements is $\phi$-indiscernible.
In particular, $\phinEM()$ belongs to the EM-type of each $\phi$-indiscernible sequence.

\def\Mphib{M_\bullet^{\{\phi\}}}
\def\Msb{M_\bullet^{\Sigma}} 
\begin{defi}[$\Mphib:\sFilth$, $\Msb:\sFilth$]\label{app:def:Mb}
%
%

Let $M$ be a model, and let $\Sigma$ be a set of formulas in the language of $M$.

Let $\Msb:\Dop\lra\Filth$ be the simplicial filter whose underlying simplicial set is $|M|_\bullet$ corepresented by the set of elements of $M$,
i.e.~the functor 
$\homm{\Sets}-{|M|}:\Dop\lra\Sets$.
The set $$\homm{\Sets}{n_\leq}{|M|}=\{(x_1,...,x_n)\in |I|^n\}=|M|^n$$
is equipped with the filter generated by the sets $\phi^{n\text{-EM}}(M^n)$ of all tuples satisfying the formula $\phinEM$,
for $\phi\in\Sigma$.

Let $M_\bullet$ denote $\Msb$ for $\Sigma$ the set of all parameter-free formulas of the language of $M$, 
and let $M^{L(A)}_\bullet$ denote $\Msb$ for $\Sigma$ the set of all  formulas of the language of $M$ with parameters in $A$.
Let $M^\text{qf}_\bullet$ denote $\Msb$ for $\Sigma$ the set of all quantifier-free formulas of the language of $M$.

\end{defi}

The reader may wish to check that the forgetful functor $\sFilth\lra \Topp$ 
(see ~\href{https://mishap.sdf.org/Skorokhod_Geometric_Realisation.pdf}{[7,\S12.6.3]}, also \href{http://mishap.sdf.org/6a6ywke/6a6ywke.pdf#24}{[6,\S2.2.4]}))
takes $M_\bullet$ into the set $|M|$ equipped 
with the topology generated by sets $\phi(M)$ for all the unary formulas $\phi$ of $M$; so to say, it is the $1$-Stone space of $M$
{\em before} it has been quotiented by the relation of having the same type.

To verify that  $\Mphib:\Dop\lra\Filth$ is indeed a functor to $\Filth$ rather than just $\Sets$,
it is enough to verify that for each $\phi\in \Sigma$ $\Mphib:\Dop\lra\Filth$ is indeed a functor to $\Filth$ rather than just $\Sets$. 
We only need to check that maps 
$$[i_1\leq ... \leq i_n]:M^m\lra M^n,\ \ \ (x_1,..,x_m)\longmapsto (x_{i_1},...,x_{i_n})$$ are continuous for any $1\leq i_1\leq ...\leq i_n\leq m$,
i.e.~that for each $x_1,..,x_n\in M$ 
$$M\models \phimEM(x_1,...,x_m)\implies \phinEM(x_{i_1},...,x_{i_n})$$
However, this is trivially true, as $\phimEM(x_1,...,x_m)$ says that each subsequence of $x_1,...,x_m$ 
with distinct elements is $\phi$-indiscernible, and 
$ \phinEM(x_{i_1},...,x_{i_n})$ says that  each subsequence of $x_{i_1},...,x_{i_n}$ 
with distinct elements is $\phi$-indiscernible. Note that it is essential in the argument 
that we talk about {\em distinct} elements, i.e.~that the inequalities in $\phinEM$ are essential.

\subsubsection{Indiscernible sequences with repetitions}
Call a sequence {\em (order) $\phi$-indiscernible with repetitions} iff each subsequence with {\em distinct} elements is necessarily 
(order) $\phi$-indiscernible. By the definition of $\phinEM$, 
a sequence is {$\phi$-indiscernible with repetitions}
iff $\phinEM$ belongs to its EM-type for any $n>0$ (equiv., for some  $n>2r$ where $r$ is the arity of $\phi$).
Note that we allow that there are only finitely many distinct elements in a $\phi$-indiscernible sequence with repetitions,
e.g.~$(a,b,a,b,a,b,...)$ is  $\phi$-indiscernible with repetitions for  $\{a,b\}$ an indiscernible set. 

The following lemma is the key observation which started this paper. 

\begin{lemm}\label{IbtoMb} For any linear order $I$ and any structure $M$ the following holds. 
\bi 
\item An eventually  $\phi$-indiscernible  sequence  $(a_i)_{i\in I}$ in $M$  with repetitions 
induces an $\sFilth$-morphism $a_\bullet:\Ileqtailsb \lra \Mphib $, and, conversely, 
each morphism $\Ileqtailsb \lra \Mphib $ is induced by a unique such sequence.

\item An eventually order $\phi$-indiscernible  sequence  $(a_i)_{i\in I}$ with repetitions 
induces an $\sFilth$-morphism $a_\bullet:\iIitailsb \lra \Mphib $, and, conversely, 
every  morphism $:\iIitailsb \lra \Mphib $  is induced by a unique such sequence.
\ei
\end{lemm}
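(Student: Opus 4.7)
The plan is to split the argument into (a) recognising morphisms of underlying simplicial sets as ordinary sequences, and (b) translating continuity level by level into a model-theoretic tail condition.

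First I would apply the corepresentability argument of \S\ref{maps:coreps} (the face maps $[i]:1_\leq\to n_\leq$ force any natural transformation out of a corepresented simplicial set to be determined by its component in dimension zero) to obtain: a morphism of underlying simplicial sets $|\Ileqtailsb|\lra|\Mphib|$ is the same datum as a set map $|I|\lra|M|$, i.e.\ a sequence $(a_i)_{i\in I}$, and the induced natural transformation sends $(t_1\leq\ldots\leq t_n)$ to $(a_{t_1},\ldots,a_{t_n})$. The identical argument applies with $|\Ileqtailsb|$ replaced by $|\iIitailsb|$, since both have $|I|$ in dimension zero. This already furnishes the uniqueness claim in both items.

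Next I would unpack what continuity of $a_\bullet:\Ileqtailsb\to\Mphib$ means level by level. The filter on $\Mphib(n_\leq)$ is generated by $\phinEM(M^n)$, and the filter on $\Ileqtailsb(n_\leq)$ is generated by the tails $\{(t_1\leq\ldots\leq t_n):t_1\geq i_0\}$ with $i_0\in I$. Continuity at level $n$ is therefore the existence of some $i_0\in I$ with $M\models\phinEM(a_{t_1},\ldots,a_{t_n})$ for all $i_0\leq t_1\leq\ldots\leq t_n$, which unfolds to the statement that every ordered subtuple with pairwise distinct entries drawn from $(a_i)_{i\geq i_0}$ is $\phi$-indiscernible. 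Since $\phinEM$ is controlled entirely by $r$-subtuples (where $r$ is the arity of $\phi$), continuity at level $n=2r$ already converts this into ``$(a_i)_{i\geq i_0}$ is $\phi$-indiscernible with repetitions'' (any two disjoint ordered $r$-subsequences both appear as ordered subsequences of their sorted union, which has length at most $2r$); conversely, the tail condition immediately witnesses continuity at every level with the same $i_0$. This gives item (i).

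Item (ii) follows by the same bookkeeping, the only modification being that the filter on $\iIitailsb(n_\leq)=|I|^n$ is generated by $\varepsilon^n$ without any monotonicity constraint on the coordinates. Continuity accordingly demands the tail condition after permitting arbitrary reorderings of the indices $t_1,\ldots,t_n$, which promotes ``$\phi$-indiscernible with repetitions'' to ``order $\phi$-indiscernible with repetitions'' (i.e.\ indiscernible as a set with repetitions), as wanted. I expect the only slightly delicate point to be the extraction of a single tail bound $i_0$ uniform in $n$, but by the remark above this reduces to examining a single level $n=2r$ and so presents no genuine obstacle.
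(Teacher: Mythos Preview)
Your proposal is correct and follows essentially the same route as the paper: first invoke corepresentability (Example~\ref{maps:coreps}) to identify morphisms of the underlying simplicial sets with sequences $(a_i)_{i\in I}$, then unwind continuity at each level $n$ against the generating neighbourhoods $\phinEM(M^n)$ on the target and the tail neighbourhoods on the source. Your explicit reduction to the single level $n=2r$ via the sorted-union argument is a detail the paper leaves implicit; the word ``disjoint'' in that step is superfluous (the two $r$-subtuples need not be disjoint), but the argument goes through unchanged.
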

\begin{proof}
First note that by Example~\ref{maps:coreps} maps $|I|\lra |M|$ of sets 
are into one-to-one correspondence with morphisms $|I|_\bullet \lra |M|_\bullet$, 
i.e.~the morphisms of the underlying simplicial sets $|I|_\bullet=|\iIitailsb|$ and $|M|_\bullet=|\Mphib|$,
and the same is true for $|I^\leq_\bullet|=|\Ileqtailsb|\lra |M|_\bullet=|\Mphib|$.  Hence, 
at the level of the underlying simplicial sets,
any sequence  $(a_i)_{i\in I}$  gives rise to morphisms  
$|\Ileqtailsb|\lra\,\,|\,\,\Mphib |$ and  $|\iIitailsb| \lra |\Mphib|$ of the underlying simplicial sets.
Therefore, 
we only need to check what it means to be continuous for the following induced maps of filters for each $n>0$: 
$$ |\Ileqtailsb(n_\leq)|=\{(i_1,...,i_n)\in |I|^n\,:\, i_1\leq ... \leq i_n\}  \lra M^n $$ 
$$ |\iIitailsb(n_\leq)|= |I|^n \lra M^n$$
where $M^n$ is equipped with the $\phi$-EM-filter, and 
$ |I|^n$ and $ (i_1,...,i_n) \in |I|^n\,:\, i_1\leq ... \leq i_n\} $ 
are equipped with the filter of tails.

Continuity of $ \{(i_1,...,i_n)\in |I|^n\,:\, i_1\leq ... \leq i_n\}  \lra M^n $ 
means that the preimage of any neighbourhood, i.e.~of $\phinEM(M^n)$, 
contains a neighbourhood, i.e.~the subset $\{(i_1,...,i_n)\in |I|^n\,:\, i_0\leq  i_1\leq ... \leq i_n\}$ for some $i_0\in I$.
That is, for each $i_0\leq  i_1\leq ... \leq i_n$ $M\models \phinEM(a_{i_1},...,a_{i_n})$.
By definition of  $\phinEM$, it means that for two any subsequences  
$a_{i_{l_l}},...,a_{i_{l_r}}$ and $a_{i_{k_1}},...,a_{i_{k_r}}$, $1\leq l_1<..<l_r\leq n, 1\leq k_1<...<k_r\leq n$,
with {\em distinct} elements of the final segment  $(a_i)_{i\geq i_0}$,                                                                                        it holds that $\phi(a_{i_{l_l}},...,a_{i_{l_r}})\leftrightarrow \phi(a_{i_{k_1}},...,a_{i_{k_r}})$. 
That is, by definition it means that the sequence $(a_i)_{i\geq i_0}$ is $\phi$-indiscernible with repetitions. 

Continuity of $ |I|^n \lra M^n$  means that the preimage of any neighbourhood, i.e.~of $\phinEM(M^n)$, 
contains a neighbourhood, i.e.~the subset $\{(i_1,...,i_n)\in |I|^n\,:\, i_0\leq  i_1, i_0\leq i_1, ..., i_0\leq i_n\}$ for some $i_0\in I$.
That is, for each $i_0\leq  i_1, ... ,i_0 \leq i_n$ $M\models \phinEM(a_{i_1},...,a_{i_n})$; 
note that now  $i_1, ... , i_n$ are not necessarily either distinct or increasing.
That is, for each $i_0\leq  i_1\leq ... \leq i_n$ $M\models \phinEM(a_{i_1},...,a_{i_n})$. 
By definition of  $\phinEM$, it means that for two subsequences  $a_{i_{l_l}},...,a_{i_{l_r}}$ and $a_{i_{k_1}},...,a_{i_{k_r}}$ 
with {\em distinct} elements of the final segment  $(a_i)_{i\geq i_0}$,  
it holds that $\phi(a_{i_{l_l}},...,a_{i_{l_r}})\leftrightarrow \phi(a_{i_{k_1}},...,a_{i_{k_r}})$.
This is exactly the definition of being order $\phi$-indiscernible with repetitions.
\end{proof}

\subsubsection{Cauchy sequences: a formal analogy to indiscernible sequences}

This is formally unnecessary but might help the reader's intuition. 

Recall that with a metric space $M$ we associate its simplicial filter $M_\bullet$ whose underlying simplicial set 
$|M|_\bullet$ is corepresented by the set of elements of $M$, and 
where we equip $|M|^n$ with {\em the filter of $\varepsilon$-neighbourhoods of the main diagonal} 
generated by the subsets, for $\epsilon>0$
$$ \{\, (x_1,...,x_n)\,:\, \dist(x_i,x_j)<\epsilon \text{ for }1\leq i<j\leq n \,\}
$$

This lemma establishes a formal analogy between Cauchy sequences and indiscernible sequences with repetitions:
they both are defined as in $\sFilth$ as morphisms from the same object associated with a linear order.

\begin{lemm}\label{IbtoMetrb} For any linear order $I$ and any metric space $M$ the following holds. 
\bi 
\item[(i)]  A sequence  $(a_i)_{i\in I}$ in $M$ induces an morphism $a_\bullet:|I|_\bullet \lra |M|_\bullet $ of $\sSets$, 
and, conversely,every morphism $|I|_\bullet \lra |M|_\bullet  $ corresponds to a unique such sequence
\item[(ii)]  A Cauchy sequence  $(a_i)_{i\in I}$ in $M$  
induces an $\sFilth$-morphism $a_\bullet:\Ileqtailsb \lra M_\bullet $, and, conversely, 
every morphism $\Ileqtailsb \lra \Mphib $ corresponds to a unique such sequence.

\item[(iii)]  A Cauchy sequence  $(a_i)_{i\in I}$  in $M$
induces an $\sFilth$-morphism $a_\bullet:\iIitailsb \lra \Mphib $, and, conversely, 
every  morphism $\iIitailsb \lra \Mphib $  is induced by a unique such sequence.
\ei
\end{lemm}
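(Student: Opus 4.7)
The plan is to decouple the lemma into two pieces: (i) is a Yoneda-type statement about underlying simplicial sets and requires no filter argument, while (ii) and (iii) reduce, once (i) is in hand, to unpacking the continuity condition on each level $n$ and matching it to the Cauchy condition.

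For (i), both $|I|_\bullet$ and $|M|_\bullet$ are the simplicial sets corepresented by the underlying sets $|I|$ and $|M|$. The embedding $\Sets\lra\sSets$ of Example~\ref{maps:coreps} is fully faithful, so morphisms $|I|_\bullet\lra |M|_\bullet$ are in natural bijection with set maps $|I|\lra |M|$. I would spell this out exactly as in the proof of Lemma~\ref{iIbtoMb}: if $a_\bullet$ is a natural transformation and $(y_1,\ldots,y_n)=a_n(x_1,\ldots,x_n)$, then functoriality applied to $[i]:1_\leq\lra n_\leq$, $1\mapsto i$, gives $y_i=a_1(x_i)$, so $a_n$ is determined by $a_1$, and conversely any set map $|I|\lra |M|$ extends uniquely because each $n$-simplex of $|M|_\bullet$ is determined by its $0$-dimensional faces.

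Given (i), for (ii) and (iii) the set-theoretic data is fixed; it remains to ask when the induced maps
\[
\{(i_1,\ldots,i_n)\in |I|^n:i_1\leq\cdots\leq i_n\}\lra M^n,\qquad |I|^n\lra M^n
\]
are continuous, where the sources carry the filter of tails and $M^n$ carries the filter of uniform neighbourhoods of the main diagonal, generated by $\{(x_1,\ldots,x_n):\dist(x_s,x_t)<\epsilon\text{ for all }s,t\}$. Treating $n=2$ explicitly and noting the argument is the same for larger $n$, continuity says that for every $\epsilon>0$ there exists $i_0\in I$ such that whenever $i_1,\ldots,i_n\geq i_0$ (additionally weakly increasing in case (ii)) one has $\dist(a_{i_s},a_{i_t})<\epsilon$ for all $s,t$. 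For $n=2$ this is the Cauchy condition verbatim, and for $n>2$ it follows from Cauchy by taking $i_0$ past which all pairs are within $\epsilon$.

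The one point that deserves care is why (ii) and (iii) give the same condition despite the apparent asymmetry between ordered and unordered tuples: this works precisely because the target filter on $M^n$ is symmetric in coordinates, so the constraint $\dist(a_{i_s},a_{i_t})<\epsilon$ is indifferent to the ordering of the indices. This is the formal feature that makes the Cauchy case cleaner than the indiscernible-with-repetitions analogue of Lemma~\ref{iIbtoMb}, where passing from the ordered to the unordered version genuinely changes the model-theoretic content. There is no serious obstacle beyond bookkeeping; the only subtlety is keeping track of which filter lives on which side of the induced map at each stage.
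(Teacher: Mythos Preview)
Your proposal is correct and follows essentially the same route as the paper: part (i) via the corepresentability argument of Example~\ref{maps:coreps} (exactly as in Lemma~\ref{iIbtoMb}), and parts (ii)--(iii) by unpacking continuity at level $n=2$ with the remark that $n>2$ is identical. Your closing observation---that (ii) and (iii) yield the \emph{same} condition because the target filter on $M^n$ is symmetric in the coordinates---is a nice clarification that the paper leaves implicit.
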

\begin{proof}
(i): A map $a:|I|\lra |M|$ of sets induces a natural transformation of functors $f_\bullet:I^\leq_\bullet\lra |M|_\bullet$:
for each $n>0$, a tuple $(i_1\leq ..,\leq i_n)\in I^\leq_\bullet(n_\leq)$ goes into tuple $(a_{i_1},,...,a_{i_n})\in |M|^n=|M|_\bullet(n_\leq)$. 
Now let us show that every natural transformation $a_\bullet: I^\leq_\bullet\lra |M|_\bullet$ is necessarily of this  form,
by the following easy argument. Let $(y_1,..,y_n)=f_n(x_1,..,x_n)$; by functoriality using maps $[i]:1\lra n, 1\mapsto i$ 
we know that $y_i=(y_1,..,y_n)[i]=f_n(x_1,..,x_n)[i]=f_1(x_i)$. Finally, use that for any $y_1,..,y_n\in |M|_\bullet(1_\leq)=|M|$
there is a unique element  $\tilde y \in  |M|_\bullet(n_\leq)=|M|^n$
such that  $y_i=\tilde y[i]$. In a more geometric language, we may say 
that we used the following property of the simplicial set $|M|_\bullet$: 
that  each ``(n-1)-simplex'' $(y_1,..,y_n)$ is uniquely determined by its ``$0$-dimensional faces'' $y_1,...,y_n\in M$.


(ii): We need to check what it means to for the map $f_n:I^\leq_\bullet(n_\leq) \lra M^n$ of filters to be continuous.
Consider $n=2$; for $n>2$ the argument is the same. Continuity means that  
for each $\epsilon>0$ 
and thereby ``$\epsilon$-neighbourhood $\varepsilon:=\{(x,y):x,y\in M, \dist(x,y)<\epsilon\}$ 
of the main diagonal'' there is $N>0$ and 
thereby a ``$N$-tail'' neighbourhood $\updelta:=\{(i,j)\in \omega\times \omega : N\leq i\leq j\}$ 
such that $a(\updelta)\subset\varepsilon$, i.e.~for each $j\geq i >N$ it holds $\dist(a_i,a_j)<\epsilon$.

(iii): We need to check what it means to for the map $f_n:|I|^n \lra M^n$ of filters to be continuous.
Consider $n=2$; for $n>2$ the argument is the same. Continuity means that  
for each $\epsilon>0$ 
and thereby ``$\epsilon$-neighbourhood $\varepsilon:=\{(x,y):x,y\in M, \dist(x,y)<\epsilon\}$ 
of the main diagonal'' there is $N>0$ and 
thereby a ``$N$-tail'' neighbourhood $\updelta:=\{(i,j)\in \omega\times \omega : i,j>N\}$ 
such that $a(\updelta)\subset\varepsilon$, i.e.~for each $i,j>N$ it holds $\dist(a_i,a_j)<\epsilon$. 
\end{proof}

\subsubsection{Stability as Quillen negation of indiscernible sets (fixme: better
subtitle..)}

Let $\top=|\{pt\}|_\bullet$ denote the terminal object of the category $\sFilth$,
i.e.~``the 
 simplicial set corepresented by a singleton
equipped with antidiscrete filters'': for any $n>0$ $\top(n_\leq):=\{pt\}$, and the only big subset is $\{pt\}$ itself. 

\begin{propo}[Stability as Quillen negation]\label{stab:lift} 
Let $M$ be a model, and let $\phi$ be a formula in the language of $M$. Let $I$ be a linear order. 
The following are equivalent: 
\bi
\item[(i)] in the model $M$, each infinite $\phi$-indiscernible sequence is necessarily a $\phi$-indiscernible set

\item[(ii)] in the model $M$, each eventually (possibly finite!) $\phi$-indiscernible sequence is necessarily an eventually $\phi$-indiscernible set

\item[(ii')] in the model $M$, each eventually (possibly finite!) $\phi$-indiscernible sequence 
with repetitions is necessarily eventually order $\phi$-indiscernible with repetitions

\item[(iii)]  the following lifting property holds in the category $\sFilth$:
$$ I_\bullet^{\leq\text{tails}} \lra {|I|}^{\text{tails}}_\bullet
\,\rtt\, \Mphib \lra \top 
$$
i.e.~the following diagram in $\sFilth$ holds:
\def\rrt#1#2#3#4#5#6{\xymatrix{ {#1} \ar[r]|{} \ar@{->}[d]|{#2} & {#4} \ar[d]|{#5} \\ {#3}  \ar[r] \ar@{-->}[ur]^{}& {#6} }}
$$ 
\ \ \xymatrix{  \Ileqtailsb \ar[r]^{} \ar@{->}[d]|{} &   \Mphib  \ar[d]^{} 
\\  \iIitailsb \ar[r]|-{} \ar@{-->}[ur]|{}& \top  }$$

\ei
\end{propo}
\begin{proof} 
(i)$\Leftrightarrow$(ii) is obvious and well-known.

(ii')$\implies$(ii) is trivial so we only need to prove (ii)$\implies$(ii):
Consider an eventually $\phi$-indiscernible sequence with repetitions. 
Take a maximal subsequence with distinct elements. First assume it is infinite. 
Then it is eventually $\phi$-indiscernible, hence eventually order $\phi$-indiscernible by (ii), 
hence  the original sequence is eventually order $\phi$-indiscernible with repetitions. 

So what happens if there are only finitely many distinct elements?\footnote{We remark that in category theory it is often important that things work
in a ``degenerate'' case, such as, here, the case of a $\phi$-indiscernible  sequence which has only finitely many distinct elements.
Note that we would not have been able to write the lifting property if not for this set-theoretic argument.} 
Call this sequence $(a_i)_{i\in I}$.
Pick an initial segment $(a_i)_{i\leq i_0}$ 
of the sequence  which contains all the elements of $(a_i)_{i\in I}$ 
which occur only finitely many times; then in the corresponding final segment $(a_i)_{i>i_0}$ 
each element occurs infinitely many times whenever it occurs there at all. 
Therefore any finite subsequence of that final segment occurs there in an arbitrary order, 
hence $(a_i)_{i>i_0}$ is $\phi$-indiscernible iff it is order $\phi$-indiscernible. 
 
(iii)$\implies$(ii'): Let $(a_i)_{i\in I}$ be an eventually $\phi$-indiscernible sequence with repetitions.
By Lemma~\ref{iIbtoMb} it induces an $\sFilth$-morphism  $a_\bullet: \Ileqtailsb  \lra \Mphib$. 
By the lifting property it lifts to a $\sFilth$-morphism  $a'_\bullet: \iIitailsb  \lra \Mphib$. 
By commutativity they both correspond to the same map of the underlying simplicial sets, 
i.e.~to the same sequence  $(a_i)_{i\in I}$. Again by  Lemma~\ref{iIbtoMb},
this sequence is eventually order $\phi$-indiscernible with repetitions.

(ii')$\implies$(iii): Let $a_\bullet: \Ileqtailsb  \lra \Mphib$ be the $\sFilth$-morphism corresponding 
to the lower horizontal arrow. By Lemma~\ref{iIbtoMb} it corresponds to an  
eventually $\phi$-indiscernible sequence  $(a_i)_{i\in I}$ with repetitions.
By (ii') this sequence is also eventually order $\phi$-indiscernible with repetitions. 
By Lemma~\ref{iIbtoMb} it corresponds to an $\sFilth$-morphism $ \iIitailsb\lra \Mphib$, as required.
\end{proof}

\subsubsection{NIP as almost a lifting property}

\begin{lemm}[NIP as almost a lifting property] 
Let $M$ be a structure. 
Let $I$ be a linear order. 
The following are equivalent: 
\bi
\item[(i)] in the model $M$, for each $b\in M$, 
each eventually indiscernible sequence (over $\emptyset$) is eventually $\phi(-,b)$-indiscernible

\item[(ii)] in $\sFilth$  each injective morphism  $ I_\bullet^{\leq\text{tails}}  \lra  M_\bullet  $ factors as 
 $ I_\bullet^{\leq\text{tails}}  \lra    M^{L(M)}_\bullet \lra  M_\bullet  $, i.e.~the following diagram holds:
$$ 
\ \ \xymatrix{ \bot \ar[r]^{} \ar@{->}[d]|{} &   M^{L(M)}_\bullet  \ar[d]^{} 
\\   I_\bullet^{\leq\text{tails}}  \ar[r]|-------{(inj)} \ar@{-->}[ur]|{}& M_\bullet  }$$

\item[(iii)] in $\sFilth$  each injective morphism  $ \omega_\bullet^{\leq\text{tails}}  \lra  M_\bullet  $ factors as 
 $ \omega_\bullet^{\leq\text{tails}}  \lra    M^{L(M)}_\bullet \lra  M_\bullet  $, i.e.~the following diagram holds:
$$ 
\ \ \xymatrix{ \bot \ar[r]^{} \ar@{->}[d]|{} &   M^{L(M)}_\bullet  \ar[d]^{} 
\\   \omega_\bullet^{\leq\text{tails}}  \ar[r]|-------{(inj)} \ar@{-->}[ur]|{}& M_\bullet  }$$
\ei
\end{lemm}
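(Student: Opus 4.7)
The plan is to reduce both implications of (i)$\Leftrightarrow$(ii) to the correspondence in Lemma~\ref{IbtoMb}, applied at two different choices of the formula-set $\Sigma$ (parameter-free formulas for $M_\bullet$, and $L(M)$-formulas for $M^{L(M)}_\bullet$), and then to derive (ii)$\Leftrightarrow$(iii) by compactness. First I would note the free observation that trivialises the ``factorisation'' aspect of (ii): the canonical map $M^{L(M)}_\bullet \to M_\bullet$ is the identity on underlying simplicial sets, and it is a morphism in $\sFilth$ because the filter on $M^{L(M)}_\bullet(n_\leq)$ is finer than that on $M_\bullet(n_\leq)$ (having additional generators $\phi^{n\text{-EM}}(M^n)$ for parameterised $\phi$). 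Consequently, commutativity of the lower triangle in diagram (ii) is automatic whenever a candidate diagonal morphism $\Ileqtailsb \to M^{L(M)}_\bullet$ is built from the same underlying sequence as the given morphism $\Ileqtailsb \to M_\bullet$; the upper triangle has domain $\bot$ and is vacuous.

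For (ii)$\Rightarrow$(i), given an eventually indiscernible sequence $(a_i)_{i\in I}$ in $M$ with distinct entries, Lemma~\ref{IbtoMb} yields an injective $\sFilth$-morphism $a_\bullet : \Ileqtailsb \to M_\bullet$. The factorisation posited by (ii) produces $\Ileqtailsb \to M^{L(M)}_\bullet$, which by Lemma~\ref{IbtoMb} applied with $\Sigma=L(M)$ is the datum of a sequence that, for each $\phi$ with parameters in $M$, is eventually $\phi$-indiscernible with repetitions; combined with the distinctness of the $a_i$, this is literal eventual $\phi(-,b)$-indiscernibility for every $b \in M$ and every $\phi$, yielding (i). For (i)$\Rightarrow$(ii), I would run this argument in reverse: an injective morphism $\Ileqtailsb \to M_\bullet$ decodes via Lemma~\ref{IbtoMb} to a sequence of distinct elements whose tails are $\phi$-indiscernible for each parameter-free $\phi$, then (i) upgrades this to eventual $\phi(-,b)$-indiscernibility for each parameterised $\phi$, which re-encodes as the required $\sFilth$-morphism $\Ileqtailsb \to M^{L(M)}_\bullet$ filling the triangle.

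The equivalence (ii)$\Leftrightarrow$(iii) is a standard compactness argument: continuity against each individual generator $\phi^{n\text{-EM}}(M^n)$ of the filter on $M^{L(M)}_\bullet$ involves only one formula $\phi$ with finitely many parameters, and the filter of tails on $\Ileqtailsb$ has finite character, so the statement reduces to the case $I=\omega$ by extracting a cofinal $\omega$-subsequence (and, conversely, extending a tail-witness on $\omega$ to a tail-witness on $I$ using a cofinal $\omega$-chain). The main technical obstacle, and the reason this lemma is only \emph{almost} a lifting property, is the careful management of the ``with repetitions'' clause in Lemma~\ref{IbtoMb}: injectivity of the morphism is precisely what erases this clause and permits the identification of continuity with ordinary $\phi$-indiscernibility of a tail. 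Without injectivity, a morphism arising from a sequence such as $(a,b,a,b,\ldots)$ with $\{a,b\}$ an indiscernible set is continuous to $M_\bullet$ (every distinct-element subsequence is indiscernible) yet fails to be continuous to $M^{L(M)}_\bullet$ (a formula $\phi(x,b)$ isolating $x=b$ detects the alternation); this defect is repaired only later by passing to the consecutive-repetitions variant $M'_\bullet$ in Lemma~\ref{nip:lifts}.
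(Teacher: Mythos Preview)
Your proposal is correct and follows essentially the same route as the paper's proof: decode an injective morphism $\Ileqtailsb\to M_\bullet$ via Lemma~\ref{IbtoMb} as an eventually indiscernible sequence of distinct elements, observe that the candidate diagonal is the same underlying map so only continuity to $M^{L(M)}_\bullet$ is at stake, identify that continuity with eventual $\phi$-indiscernibility for every $\phi\in L(M)$, and invoke compactness for (ii)$\Leftrightarrow$(iii). Your write-up is in fact more explicit than the paper's (which compresses (i)$\Leftrightarrow$(ii) into a few lines and says only ``follows from compactness'' for (ii)$\Leftrightarrow$(iii)), and you have folded into the argument the $(a,b,a,b,\dots)$ counterexample that the paper places as a remark immediately after the proof.
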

\begin{proof} 
(i)$\leftrightarrow$(ii): Both the bottom arrow and the diagonal arrow $|\Ileqtailsb|\lra\,\,|\,\,M^{L(M)}_\bullet|= M_\bullet|$ 
correspond to the same  map $a:|I|\lra |M|$, i.e.~a sequence $(a_i)_{i\in I}$ of elements of $M$.
Thus, we only need to check continuity. 
Injectivity and continuity of the bottom arrow  $ I_\bullet^{\leq\text{tails}}  \lra  M_\bullet  $ means 
it is an eventually indiscernible sequence with repetitions. 
Continuity of the diagonal arrow  $ I_\bullet^{\leq\text{tails}}  \lra   M^{L(M)}_\bullet  $ means
it is an eventually $\phi$-indiscernible sequence with repetitions for any formula $\phi\in L(M)$ with parameters in $M$.
This is exactly (i). 

(ii)$\leftrightarrow$(iii): follows from compactness. 
\end{proof}

However, note the diagram (ii) may fail for a non-injective morphism. Indeed, a sequence $(a,b,a,b,...)$
where $(a,b)$ is an indiscernible set, represents a continuous map $ I_\bullet^{\leq\text{tails}}  \lra  M_\bullet$.
In Appendix \S\ref{sec:nip} we slightly modify the notion of the EM-filter to take care of this ``degenerate'' case,
and define NIP as a lifting property
$$
\bot \lra I_\bullet^{\leq\text{tails}} 
\,\rtt\,
M^{L(M)}_\bullet \lra M_\bullet$$

Later we also reformulate as a lifting property the characterisation of NIP using average/limit types.

\subsubsection{Simplicial Stone spaces} 
Let $M_\bullet/A$ denote the {\em simplicial Stone space of types over
$A$}, i.e.~the simplicial filter $M_\bullet$ quotiented by the relation of having the same type over $A$.
Note that in all our characterisations above we could have used  $M_\bullet/A$ rather than $M_\bullet$.
We also note that from a certain category-theoretic point of view it is somewhat more interesting,
as its underlying simplicial set is not corepresented.

We use these spaces to reformulate non-dividing  in a diagram-chasing manner in Appendix \S\ref{non-div}.

\subsection{NIP and limit types  as Quillen negation}\label{sec:nip}

To define NIP in terms of limit types, we need to introduce the shift endofunctors of $\Delta$ and thereby $\sFilth$,
and shifted models. We also need to modify the notion of EM-type and make it not symmetric. 
We do so now.

\subsubsection{The ``shift'' endofunctor  $[+\infty]:\Delta\lra\Delta$ ``forgetting the last coordinate''}

Let us define the ``shift'' endofunctor of $\Dop$ ``forgetting the last coordinate''.
Let $[+\infty]:\Delta\lra\Delta$ denote the shift by $1$ adding a new maximal element, which is kept fixed by the morphisms.
In notation, the endofunctor $[+\infty]$ sends the linear order $n_\leq\in \Ob\Dop$ into the linear order $(n+1)_\leq\in \Ob\Dop$,
and a morphism $f:m_\leq \lra \leq n_\leq$ into the morphism $f[+\infty]:(m+1)_\leq\lra (n+1)_\leq$ defined by
$f[+\infty](m+1)=n+1$, and for all $1\leq i\leq m$ $f[+\infty](i)=f(i)$. 
\footnote{FIXME: FIXME:, I find notation below with $+\infty$ somewhat more telling, but i suppose it is also confusing...
 $$[+\infty]: (1\!<\! ... \!<\!n)\longmapsto (\!1\!<\!...\!<\!n<+\infty)$$ 
and a morphism $f:m_\leq \lra \leq n_\leq$ into the morphism $f[+\infty]:(m+1)_\leq\lra (n+1)_\leq$ defined by
$$; \ \  ([+1] f) (+\infty):=+\infty;\ \, ([+1]f)(i):=i$$
}
The endofunctor is equipped with a natural transformation $[-\infty]\,:\,[+\infty]\implies\id:\Dop\lra\Dop$,
$[1<2<..<n]:(n+1)_\leq \lra n_\leq$. 

For a $X_\bullet:\sFilth$ a simplicial filter, the  morphism $X_\bullet[+\infty]\xra{[-\infty]} X_\bullet$ will be particularly useful to us.

To gain intuition, one may want to consider the example of a corepresented set. 
The shift endofunctor takes a corepresented sset 
$$\homm{Sets}{-}{M}=(M,M\times M,...)$$ into $$\homm{Sets}{-}{M}\circ [+1]=(M\times M, M\times M\times M,...)$$
equipped with a natural transformation ````forgetting the first coordinate'':
$M\times M\lra M,\ (x_1,x_2)\longmapsto x_1$,
and 
$M\times M\times M\lra M\times M,\ (x_1,x_2,x_3)\longmapsto (x_1,x_2)$,...

\subsubsection{Completeness of a metric space in terms of the shift endofunctor}


Recall that, for a metric space $M$, its simplicial filter $M_\bullet$ is $|M|_\bullet$ 
where we equip $|M|^n$ with {\em the filter of $\varepsilon$-neighbourhoods of the main diagonal}:
a subset is big (a neighbourhood) iff for some $\varepsilon>0$ it contains all tuples of diameter $<\varepsilon$.

\begin{propo} A metric space is complete iff either of the following equivalent conditions holds:
\bi
\item[$(iv')$] the following lifting property holds: 
$$
\bot \lra I_\bullet^{\text{tails}} 
\,\rtt\,
M_\bullet\circ [+\infty] \lra M_\bullet$$
\item[$(v'')$] the following lifting property holds: 
$$I_\bullet^{\leq\text{tails}} \lra (I\sqcup\{+\infty\})_\bullet^{\leq\text{tails}\sqcup\{+\infty\}}
\rtt
M_\bullet\lra \top$$
\ei
\end{propo}
\begin{proof}
(iv'): First consider the level of underlying simplicial sets. To give a map $| I_\bullet^{\text{tails}}|\lra |M_\bullet|$ 
is to give a map $a:|I|\lra|M|$ of sets. 
As a simplicial set, $$|M|_\bullet\circ [+\infty] = \bigsqcup_{a_\infty\in |M|} |M|_\bullet\times \{a_\infty\}$$
is the disjoint union of connected components $|M|_\bullet\times \{a_\infty\}$, $a_\infty\in |M|$.
The sset $| I_\bullet^{\text{tails}}|$ is connected, and thus any map 
$f_\bullet:| I_\bullet^{\text{tails}}|\lra |M|_\bullet\circ [+\infty] $ maps it into the single connected component,
as the following argument shows. For any $i\leq j\in |I|_\bullet(1_\leq)$ there is a simplex $(i<j)\in  |I|_\bullet(2_\leq)$
such that $i=(i\leq j)[1]$ and $j=(i\leq j)[2]$. Consider the image $f_{2_\leq}(i\leq j)=(x,y,z)\in |M\times M\times M|$. 
By functoriality  $f_{1_\leq}(i)=f_{2_\leq}(i\leq j)[1]=(x,z)$ and  $f_{1_\leq}(j)=f_{2_\leq}(i\leq j)[2]=(y,z)$ 
lie in the same  component of the disjoint union. Restricted to any  component of the disjoint union, the map 
$ |M|_\bullet\times \{a_\infty\}\lra  |M|_\bullet$ is an isomorphism. Hence, the diagonal arrow 
$$| I_\bullet^{\text{tails}}|\lra|M|_\bullet\circ [+\infty]$$
corresponds to a map $|I|\lra |M\times M|$, $i\longmapsto (a_i,a_\infty)$ for some $a_\infty \in |M|$.

Continuity of this map means that for each neighbourhood $\{(x,y):\dist(x,y)<\epsilon\}$,
its preimage $\{i: \dist(a_i,a_{\infty})\}$ contains a final segment of $I^\leq$. 
That is, $a_{\infty}$ is the limit of $a_i$'s.

Continuity of the bottom map  $ I_\bullet^{\text{tails}}\lra M_\bullet$ means the sequence is Cauchy. Indeed, 
it  means that for each neighbourhood $\{(x,y):\dist(x,y)<\epsilon\}$,                                                                      
its preimage $\{(i\leq j): \dist(a_i,a_{j})\}$ contains all pairs with big enough elements.

(iv'): by a similar diagram chasing argument.
\end{proof}


Thus we saw that $[+\infty]:\sFilth\lra\sFilth$ endofunctors allows us to talk about limits in metric spaces.
Unfortunately, adapting this reformulation to talk about average types and NIP requires some tweaks. 
We proceed to do so now.

\subsubsection{``Shifted'' structures}

With NIP working locally becomes a bit cumbersome, and so we state the definition globally for all the formulas together.

\def\phinlEM{\phi^{n,1\text{-EM'}}}
\def\phinlEMc{\phi^{n,1\text{-EM'}}}
\def\phinEMc{\phi^{n\text{-EM'}}}
\def\phimEMc{\phi^{m\text{-EM'}}}
First we need some preliminary notation.\footnote{FIXME: this notation is rather poor and misleading. 
We do need two notions, though, for the characterisation of stability fails with these ``unsymmetrical'' filters...
they talk about  indiscernible sequences without consequitive repetitions.} 
For a formula $\phi(x_1,..,x_r)$ of arity $r$ 
and a natural number $n>0$, let $\phinEMc$ be the $n$-ary formula
$$\bigwedge_{1\leq i_1<...<i_r\leq n, 1\leq j_1<...<j_r\leq n} \left(
  \bigwedge_{1\leq s< r}  x_{i_s}\neq x_{i_{s+1}} \& x_{j_s}\neq x_{j_{s+1}} \implies 
\left( \phi(x_{i_1},..,x_{i_r}) \leftrightarrow \phi(x_{j_1},..,x_{j_r}) \right)
\right) $$

The formula $\phinEMc(a_1,...,a_n)$ says that each subsequence of $a_1,..,a_n$ with consequitive elements being {\em distinct} 
is necessarily $\phi$-indiscernible, and in particular, all  elements of the subsequence are distinct. (This is the difference with $\phinEM$: 
for distinct $a,b,c$ the formula $\phinEMc(a,b,a,c)$ necessarily fails  for $\phi(x,y)$ being $x=y$, whereas $\phinEM(a,b,a,c)$ is true.)
In particular, $\phinEMc()$ belongs to the EM-type of each $\phi$-indiscernible sequence.

For a formula $\phi(x_1,..,x_{r+1})$ of arity $r+1$  
and natural numbers  $n>0$, let $\phinlEM$ be the $(n+1)$-ary formula
$$\bigwedge\limits_{\substack{1\leq i_1<...<i_r\leq n \\ 1\leq j_1<...<j_r\leq n}} \left(
  \bigwedge_{1\leq s< r}  x_{i_s}\neq x_{i_{s+1}} \& x_{j_s}\neq x_{j_{s+1}} \implies 
\left( \phi(x_{i_1},..,x_{i_{r}},x_{n+1}) \leftrightarrow \phi(x_{j_1},..,x_{j_r},x_{n+1}) \right)
\right)  $$ 

\def\Moo{M[+\infty]_\bullet}
\begin{defi}[$\Moo:\sFilth$] Let $M$ be a structure. 

Let $\Moo$ denote the simplicial filter whose underlying simplicial set is $|M|_\bullet\circ [+\infty]$.
The set
$$(|M|_\bullet\circ [+\infty])(n_\leq)=|M|_\bullet((n+1)_\leq)=|M|^{n+1}$$
is equipped with the filter generated by sets $\phinlEMc(M^{n+1})$, for arbitrary formula $\phi$ in the language of $M$. 
\end{defi}

The verification that this functor $\Moo:\Dop\lra \Filth$ is indeed well-defined is the same as in Definition~\ref{app:def:Mb}.

\def\IleqFb{I^{\leq,\FFF}_\bullet}
\def\Ileqtailsb{I^{\leq\text{tails}}_\bullet}
\def\iIiFb{|I|_\bullet^\FFF}

\subsubsection{NIP as Quillen negation}

Now we come to our main observation about NIP. Note that the filters on $M_\bullet$ we consider here 
are different from those used to characterise stability and lack their symmetry.

Let  $\Ileqtailsb:=\IleqFb$, $\iIitailsb:=\iIiFb$  for $\FFF:=\{\{x:x\geq i\}:i\in I\}$ the filter generated by non-empty final segments of $I$.

Let $I\sqcup \{+\infty\}$ denote the linear order $I$ with a new maximal element $+\infty$ added; 

Let   $(I\sqcup\{+\infty\})_\bullet^{\leq I\text{-tails}}=\IleqFb$ with  
 $\FFF:=\{\{x:x\geq i\}:i\in I\}$ the filter generated by non-empty final segments of $I$.
Let   $(I\sqcup\{+\infty\})_\bullet^{\leq\text{tails}} =\IleqFb$ with  
 $\FFF:=\{\{x:x\geq i\}\sqcup\{+\infty\}:i\in I\}$ the filter generated by final segments of $I\sqcup\{+\infty\}$.

\begin{lemm}[NIP]\label{nip:lift} 
Let $M$ be a structure. 
Let $I$ be a linear order. 
The following are equivalent: 
\bi
\item[(i)] in the model $M$, for each $b\in M$, 
each eventually indiscernible sequence (over $\emptyset$) is eventually $\phi(-,b)$-indiscernible

\item[(ii)]  in the model $M$, the filter of final segments of any indiscernible sequence  has a complete average type (fixme: say correctly)

\item[(iii)] in $\sFilth$ the following lifting property holds:
$$
\bot \lra I_\bullet^{\leq\text{tails}} 
\,\rtt\,
M^{L(M)}_\bullet \lra M_\bullet$$ i.e.~in $\sFilth$ the following diagram holds:
$$ 
\ \ \xymatrix{ \bot \ar[r]^{} \ar@{->}[d]|{} &   M^{L(M)}_\bullet  \ar[d]^{} 
\\   I_\bullet^{\leq\text{tails}}  \ar[r]|-{} \ar@{-->}[ur]|{}& M_\bullet  }$$



\item[(iv)] in $\sFilth$ the following lifting property holds: 
$$|\{1\}|_\bullet \lra I_\bullet^{\leq\text{tails}} \rtt \Moo \lra M_\bullet$$
$$ 
\ \ \xymatrix{ |\{1\}|_\bullet  \ar[r]^{} \ar@{->}[d]|{} &  \Moo  \ar[d]^{} 
\\   I_\bullet^{\leq\text{tails}}  \ar[r]|-{} \ar@{-->}[ur]|{}& M_\bullet  }$$


\item[(v)] 
in $\sFilth$ the following lifting property holds: 
\footnote{FIXME:: the point of this older notation below was to show explicitly the underlying simplicial set....
I guess we should abandon it, it's not really helpful. equivalently in older notation, 
$$
\homm{preorders}{-_\leq}{I\\\sqcup\{+\infty\}_\leq}_{\text{cofinal in }I_\leq} \lra \homm{preorders}{-_\leq}{I\sqcup\{+\infty\}_\leq}_{\text{cofinal}} 
\,\rtt\,
M_\bullet \lra \top $$
}
$$I^{\leq\text{tails}}\sqcup \{+\infty\}_\bullet \lra (I\sqcup\{+\infty\})_\bullet^{\leq\text{tails}\sqcup\{+\infty\}}
\rtt
M_\bullet\lra \top$$
$$ 
\ \ \xymatrix{ I \sqcup\{+\infty\})^{\leq\,I-\text{tails}}_\bullet  \ar[r]^{} \ar@{->}[d]|{} &  M_\bullet  \ar[d]^{} 
\\   (I_\bullet\sqcup\{+\infty\})^{\leq\text{tails}}  \ar[r]|-{} \ar@{-->}[ur]|{}& \top  }$$
FIXME: FIXME:, I'm using slightly different notation in the diagram and the lifting property. which is better ? 
\ei

\end{lemm}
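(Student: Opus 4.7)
The plan is to prove the equivalences by closing a cycle (i) $\Leftrightarrow$ (ii) $\Leftrightarrow$ (iii), (iii) $\Leftrightarrow$ (iv), (iv) $\Leftrightarrow$ (v). For (i) $\Leftrightarrow$ (ii) I would simply quote Shelah's classical theorem that NIP is equivalent to every indiscernible sequence having an average/limit type over every set of parameters; this is exactly what (ii) asserts when rephrased via the filter of final segments. The compactness arguments turning statements about arbitrary $I$ into statements about $\omega$ (needed at various places) will be standard and I would collect them at the end.

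For (i) $\Leftrightarrow$ (iii) I would follow the template of Lemma~\ref{nip:lifts} almost verbatim. By (a variant of) Lemma~\ref{IbtoMb}, extended to $M^{L(M)}_\bullet$, an $\sFilth$-morphism $\Ileqtailsb \lra M_\bullet$ corresponds to a sequence $(a_i)_{i\in I}$ which is eventually $\phi$-indiscernible with consecutive repetitions for every parameter-free $\phi$, while continuity as a morphism into $M^{L(M)}_\bullet$ is equivalent to the same statement for formulas with parameters in $M$. The switch from ``with repetitions'' (used for stability) to ``with consecutive repetitions'' in these filters is precisely what prevents the lift from failing on degenerate maps such as the alternating $(a,b,a,b,\dots)$ of an indiscernible pair, so that no injectivity hypothesis on the lower arrow is needed.

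For (iii) $\Leftrightarrow$ (iv) I would unwind the definition of $\Moo = M_\bullet\circ [+\infty]$. A morphism $|\{1\}|_\bullet \lra \Moo$ is determined by a single element $a_\infty\in M$ sitting in the coordinate fixed by the shift endofunctor $[+\infty]$, and the structural map $\Moo \lra M_\bullet$ induced by the natural transformation $[-\infty]\colon [+\infty]\Rightarrow \mathrm{id}$ forgets this coordinate. Consequently a diagonal lift $\Ileqtailsb \lra \Moo$ is the joint data of the sequence $(a_i)_{i\in I}$ and a parameter $a_\infty\in M$, and continuity into $\Moo$ (whose filter is generated by $\phinlEMc$-neighbourhoods) is exactly the statement that $(a_i)$ is eventually $\phi(-,a_\infty)$-indiscernible for every $\phi$. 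Thus (iv) expresses the existence of a realisation of the limit type of each indiscernible sequence, which matches the condition of (iii) once one notes that $M^{L(M)}_\bullet$ is essentially the colimit of $\Moo$ over all choices of $a_\infty\in M$. Equivalence (iv) $\Leftrightarrow$ (v) is then a transposition: the simplicial filter $(I\sqcup\{+\infty\})^{\leq\text{tails}}_\bullet$ is the gluing of a new maximal element to $\Ileqtailsb$, so a morphism from it into $M_\bullet$ encodes a sequence together with a parameter at infinity; the distinction between the two filters on the left of the morphism in (v) (tails in $I$ versus tails in $I\sqcup\{+\infty\}$) controls whether the parameter $a_\infty$ is ``absorbed'' into the filter of tails, yielding the same indiscernibility-over-parameter statement as in (iv).

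The main obstacle I expect is the precise combinatorial bookkeeping in the step (iii) $\Leftrightarrow$ (iv): verifying that the filters on $\Moo$ generated by $\phinlEMc$ capture exactly the indiscernibility-with-parameter condition carried by the filters on $M^{L(M)}_\bullet$, and handling the degenerate cases where a sequence has only finitely many distinct elements. As in the proof of Proposition~\ref{main:eventually:stab:lift} one must split off an initial segment containing all elements of finite multiplicity so that in the tail every element repeats infinitely often; without this set-theoretic manoeuvre the diagrams in (iv) and (v) can fail for tautological reasons on sequences like $(a,b,a,b,\dots)$. Once this case analysis is in place and the correspondence between parameters and the fixed coordinate of $[+\infty]$ is made explicit, the remaining verifications are routine diagram chases.
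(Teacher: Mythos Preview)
Your overall plan---unwind each diagram to a statement about sequences and parameters, then match against (i)/(ii)---is exactly what the paper does, and your treatment of (i)$\Leftrightarrow$(ii) and (i)$\Leftrightarrow$(iii) is correct and matches the paper's argument essentially verbatim.

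There is, however, a genuine misreading in your analysis of (iv). You write that ``(iv) expresses the existence of a realisation of the limit type'' and that $M^{L(M)}_\bullet$ is ``essentially the colimit of $\Moo$ over all choices of $a_\infty$''. This gets the quantifier backwards. In the lifting property of (iv), the parameter $a_\infty$ is not produced by the lift; it is fixed in advance by the \emph{top arrow} $|\{1\}|_\bullet\to\Moo$, which sends the point to some pair $(a_{i_0},a_\infty)\in M^2$. Since $I_\bullet^{\leq\text{tails}}$ is connected and $|\Moo|$ splits as $\bigsqcup_{a_\infty\in M}|M|_\bullet\times\{a_\infty\}$, commutativity of the upper triangle forces the diagonal into the $a_\infty$-component already chosen; the only content of the lift is continuity, which says precisely that $(a_i)$ is eventually $\phi(-,a_\infty)$-indiscernible. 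Because the lifting property quantifies over \emph{all} commutative squares, $a_\infty$ ranges over all of $M$, and (iv) directly encodes (i), not the existence of a realisation of the limit type. The paper makes exactly this argument: it links (iv) (and (v)) straight back to (i)/(ii) rather than routing through (iii) as you propose. Your suggested (iii)$\Leftrightarrow$(iv) bridge via a colimit description does not hold as stated and would, if taken literally, collapse the universal quantifier over parameters into an existential one.

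Your remarks on the degenerate (finitely-many-distinct-elements) case are apt and match the paper: the consecutive-repetition filters $\phinEMc$ exclude patterns like $a,b,a$, so a continuous bottom arrow forces the sequence to be eventually constant or to have infinitely many distinct elements, and in either case one reduces to the injective situation.
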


\begin{proof} (i)$\Leftrightarrow$(ii) is by definition of the average type. 

(ii)$\Leftrightarrow$(iii): 
As $|M_\bullet^{L(M)}|=|M_\bullet|$, the diagram  trivially holds at the level of the underlying simplicial sets. 
Recall by Fixme~\ref{maps:coreps} that  a morphism $I_\bullet^{\text{tails}}|\lra |M_\bullet|$ of $\sSets$ is the same as a map $|I|\lra |M|$ of sets. 
Thus the bottom horizontal arrow $I_\bullet^{\text{tails}}\lra M_\bullet$ is a sequence $(a_i)_{i\in I}$, and 
its continuity means it 
is an eventually indiscernible sequence over $\emptyset$. 
Similarly, the diagonal arrow $I_\bullet^{\text{tails}}\lra M^{L(M)}_\bullet$ is an eventually indiscernible sequence over $M$.

(ii)$\Leftrightarrow$(v): In (v),  notice we may ignore arrows to $\top$. Now consider first
the upper horizontal arrow $I_\bullet^{\leq\text{tails}}\sqcup \{+\infty\}_\bullet \lra  M_\bullet$.
It corresponds to a map $|I|\sqcup\{+\infty\}\lra |M|$ of sets, i.e.~a sequence $(a_i)_{i\in i}, a_{+\infty}$. 
For clarity of exposition assume that all elements of the sequence are distinct. 
A neighbourhood in $I_\bullet^{\leq\text{tails}}$ is defined as a subset containing all increasing tuples of 
large enough elements of $I$; thus continuity of this map places no restrictions on $a_{+\infty}$ and 
just means that for every EM-formula is satisfied by such a subset, 
i.e.~for each $\phi$ the sequence  $(a_i)_{i\in I}$ is eventually $\phi$-indiscernible. 
Now consider the diagonal arrow  $  (I\sqcup\{+\infty\})_\bullet^{\leq\text{tails}\sqcup\{+\infty\}} \lra M_\bullet$.
By commutativity it corresponds to the same sequence  $(a_i)_{i\in i}, a_{+\infty}$. However, now 
the neighbourhoods are defined differently: they are larger and have to include $+\infty$: 
namely, a neighbourhood in $ (I\sqcup\{+\infty\})_\bullet^{\leq\text{tails}\sqcup\{+\infty\}}$ 
consists of 
all increasing tuples of large enough elements of $I\sqcup\{+\infty\}$. 
In particular, for each $(r+1)$-ary formula $\phi$, 
for all elements $i_1 < ... < i_r$ of $I$ large enough, 
$M\models \phi(a_{i_1},...,a_{i_r},a_{+\infty})\leftrightarrow \phi(a_{i_1},...,a_r,a_{+\infty})$.
This means exactly that either $\phi$ or $\neg\phi$ belong to the limit type of the sequence. 
This finishes the proof that (v)$\implies$(ii). To check the converse, we still need to check that
the lifting property holds if we drop the assumption that all elements of the sequence are distinct. 
However, by continuity we know that increasing tuples of large enough elements satisfy $\phinEMc$ 
for $phi$ being $x_1=x_2 \,\&\, x_2=x_3$, and this implies the sequence cannot have subsequences of 
the form  $a,b,a$ with $b\neq a$
by definition of $\phinEMc$. An elementary combinatorial argument now finishes the proof, by showing
the sequence is either eventually constant, or it reduces to the previous case of distinct elements. (fixme: say nicely..)

Let us analyse (iv). 


The bottom horizontal arrow picks a sequence $(a_i)_{i\in I}$; by the argument above, we only need to consider the case
when all elements are distinct. In this case it is  eventually indiscernible over $\emptyset$, as we already know.
(fixme: say nicely).

As a simplicial set, $ M[+\infty]_\bullet$ is the disjoint union of copies of $ M_\bullet$ indexed by $M$, i.e.
$$| M[+\infty]_\bullet| = \sqcup_{a_\infty\in M} |M_\bullet|\times \{a_\infty\}$$  
The sset $I_\bullet$ is connected (as a simplicial set), thus it has to map 
into the same connected component, and 
thereby the diagonal arrow pick an ``end'' element $a_\infty\in M$
and a sequence $(a'_i)_{i\in I}$. 

Commutativity of the lower  triangle means it is the same sequence $(a_i)_{i\in I}$ as picked  by the bottom horizontal arrow, 
i.e.~$a'_i=a_i, i\in I$. Thus, on the level of simplicial sets, the diagonal arrow always exists, 
and we only need to check continuity. 

Finally, notice that $\{a_\infty\}$ has the limit type of $(a_i)_{i\in I}$: 
for any formula $(r+1)$-ary $\phi()$ there is $i_0\in I$ such that   
for any distinct elements $a_{i_1},...,a_{i_r}$,  $i_0<i_1<...<i_r$
and  $a_{j_1},...,a_{j_r}$ $i_0<j_1<...<j_r$, $\phi(a_{i_1},...,a_{i_r},a_\infty)\leftrightarrow \phi(a_{j_1},...,a_{j_r},a_\infty)$. 
i.e.~for any distinct elements $a_{i_1},...,a_{i_n}$, $i_0<i_1<...<i_n$, 
the sequence $a_{i_1},...,a_{i_n}$ is $\phi(-,a_{+\infty})$-indiscernible. 
\end{proof}

\subsubsection{NIP and completeness have somewhat similar definitions}

A lifting property reminiscent of Lemma~\ref{nip:lift}(iv)-(v) defines completeness for metric spaces (and, more generally, uniform structures).

Recall that, for a metric space $M$, we equip $M^n$ with {\em the filter of $\varepsilon$-neighbourhoods of the main diagonal}:
a subset is big (a neighbourhood) iff for some $\varepsilon>0$ it contains all tuples of diameter $<\varepsilon$.

\begin{propo}\label{metr:complete} A metric space $M$ is complete iff either of the following equivalent conditions holds:
\bi
\item[$(iv')$] the following lifting property holds: 
$$
\bot \lra I_\bullet^{\text{tails}} 
\,\rtt\,
M_\bullet[+\infty] \lra M_\bullet$$
\item[$(v'')$] the following lifting property holds: 
$$I_\bullet^{\leq\text{tails}} \lra (I\sqcup\{+\infty\})_\bullet^{\leq\text{tails}\sqcup\{+\infty\}}
\rtt
M_\bullet\lra \top$$
\ei
\end{propo}
\begin{proof} Indeed, both in $(iv')$ and $(v')$, 
a map to $M_\bullet$ is a Cauchy sequence, and the diagonal map picks its limit. In $(v')$, the image of $+\infty$ is the limit.
\end{proof}

\subsection{Non-dividing}\label{non-div}
Simplicial Stone spaces allow to rewrite as a lifting square 
the reformulation of non-dividing 
via indiscernible sequences. This demonstrates an important difference between the simplicial Stone spaces
and the objects $M_\bullet$ associated with models we have been using so far: 
the underlying simplicial set of a simplicial Stone space is non-trivial (as a simplicial set). 

\subsubsection{Simplicial Stone spaces}

The definitions of $M_\bullet$ and  $M[+\infty]_\bullet$ we have to use now are slightly different from the ones used above,
and follow  Definition~\ref{def:Mb}.

Recall that in Definition~\ref{def:Mb} $M_\bullet$ is defined as a  simplicial set $|M|_\bullet$ where
$|=M_\bullet(n_\leq)|=|M|^n$ is equipped with the filter generated by 
the sets of $\Sigma$-indiscernible $n$-sequences with repetitions
which can be extended to an indiscernible sequence with repetitions with at least distinct $N$ elements, 
where $\Sigma$ varies through  finite sets of formulas in the language of $M$  and $N$ varies through arbitary natural numbers.

Let $M[+\infty]_\bullet$ be defined as the  simplicial set $|M|_\bullet\circ[+\infty]$ where
$|=M_\bullet\circ[+\infty](n_\leq)|=|M|^(n+1)$ is equipped with the filter generated 
by the sets of $(n+1)$-sequences $(a_1,..,a_n,a_{+\infty})$ with repetitions
such that there is a $\Sigma$-indiscernible sequence  $(a_1,..,a_n,a_{n+1},..,a_{N+n})$ with at least $N$ distinct elements and 
$\Sigma$-indiscernible over $a_{+\infty}$ with repetitions.

Recall that by $M_\bullet/A$  we denote the simplicial Stone space of types over
$A$, i.e.~the simplicial filter $M_\bullet$ quotiented by the relation of having the same type over $A$.
The meaning of $ M[+\infty]_\bullet/A$ is similar.

First observe that 
\bi 
\item 
to give a type $p=\tp(a/A)$ is to give a $\sFilth$-morphism $\{pt\}_\bullet\lra M_\bullet/A$.
\item 
to give a type $p=\tp(a/Ab)$ is to give a $\sFilth$-morphism $\{pt\}_\bullet\lra M[+\infty]_\bullet/A$.
\ei

Indeed, a morphism  $\{pt\}\lra M_\bullet/A$ is the same as an element of  $M_\bullet/A(1_\leq)$ 
which is the Stone space of $1$-types over $A$;
a morphism  $\{pt\}\lra M[+\infty]_\bullet/A$ is the same as an element of  
$M[+\infty]_\bullet/A(1_\leq)$ which is the Stone space of $2$-types over $A$.

\subsubsection{Non-dividing} 
We now rewrite 
as a lifting square 
the reformulation of non-dividing 
via indiscernible sequences. 
\begin{propo}\label{non:div}
The following are equivalent:
\bee\item $\tp(a/Ab)$ does not divide over $A$.
\item  For any infinite sequence of $A$-indiscernible $I$ starting with $b$, there exists
some $a'$ with $\tp(a' /Ab) = \tp(a/Ab)$ and such that $I$ is indiscernible over
$Aa'$.
\item For any infinite sequence of $A$-indiscernible $I$ starting with $b$, there exists
$I'$ with $\tp(I' /Ab) = \tp(I/Ab)$ and such that $I$ is indiscernible over $Aa$.
\item the following diagram holds:
$$ 
\ \ \xymatrix@C+=2.5cm{ \{1\}_\bullet \ar[r]^{\tp(a/Ab)} \ar@{->}[d]|{} &  M[+\infty]_\bullet/A \ar[d]^{[+\infty]} 
\\ I_\bullet^\leq \ar[r]|-{I} \ar@{-->}[ur]|{{\exists I'}}& M_\bullet/A }$$
\eee\end{propo} 
\begin{proof} 1$\Leftrightarrow$2$\Leftrightarrow$3 
is [Tent-Ziegler, Corollary 7.1.5]. 

Deciphering (4) gives (3), as follows. 
(4)$\implies$(3): Let $I=(b_i)_i$ be a sequence as in (3). 
It induces  
The bottom horizontal arrow 
$I_\bullet^\leq \lra M_\bullet/A $.
The top horizontal arrow 
$ \{(b,a)\}_\bullet \xra{\tp(a/Ab)}  M[+\infty]_\bullet/A$
represents the type $\tp(a/Ab)$ and thus the only point $1\in\{1\}(1_\leq)$ goes into 
the type of a pair $(b',a')\in M\times M= M[+\infty]_\bullet(1_\leq)$ of type $\tp(ba/A)$.
By assumption $I$ starts with $b$, hence the square commutes.
The diagonal  arrow 
$I_\bullet^\leq \lra M[\infty]_\bullet/A $
represents a sequence of pairs $(b''_i,a'')_{i}$ for some $a''$. 
Commutativity of the lower triangle  means that $\tp(b_i''/A)=\tp(b_i/A)$ for all $i$, i.e.~$\tp(I'/A)=\tp(I/A)$.
Commutativity of the upper triangle  means that $b'',a''$ and $b,a$ have the same type over $A$, 
i.e.~$\tp(b''/Aa'')=\tp(b/Aa)$. Continuity of $I_\bullet^\leq \lra M[\infty]_\bullet/A $ 
means the sequence $(b'')_i $ is indiscernible over $Aa''$, i.e.~is required in (3).

(3)$\implies$(4): Indeed, the top horizontal arrow 
$ \{(b,a)\}_\bullet \xra{\tp(a/Ab)}  M[+\infty]_\bullet/A$
represents the type $\tp(a/Ab)$.
The bottom horizontal arrow 
$I_\bullet^\leq \lra M_\bullet/A $
represents a sequence which extends to an infinite indiscernible sequence $I$ over $A$. 
Commutativity of the square means $I$ starts with an element of $\tp(b/A)$. 
Let  $I'=(b'_i)_i$ and $a'$ be as provided by (3). It means that the sequence $(b_i',a')_i$  
induce a diagonal  arrow 
$I_\bullet^\leq \lra M[\infty]_\bullet/A $. 
Commutativity of the upper triangle  means that $(b'_0,a')$ and $(b,a)$ have the same type over $A$, 
which holds by (3). Commutativity of the lower triangle means that $\tp((b')_i)=\tp((b_i)_i)$,
again provided by (3).
\end{proof}

\subsection{Order properties: NOP and  NSOP}

We show how a particularly simple-minded transcribing of a definition of OP leads to a reformulation of NOP as a lifting property. 
We then discuss a simplification of this lifting property and its relationship to NSOP and how it reminds of compactness.  

\subsubsection{A simple-minded transcription of OP}

Now we use the Order Property to give an example of how to ``transcribe'' a definition into $\sFilth$-language.
Our ``transcription'' here is particularly simple minded and mechanistic, and it is rather a miracle that it works.

Let $M$ be a model. Let us now show how to rewrite the order property if the language of $\sFilth$.
Recall a formula  $\phi(-,-)$ has the {\em order property}\footnote{FIXME: this is not a standard definition, should be $a_i,b_j$...}  in a model $M$ iff there exist an infinite
sequence $a_i\in M,i\in \omega$ such that 
\bi
\item[(OP${}^1(\phi)$)] $\phi(a_i,a_j)$ iff $i<j$
\ei

The definition mentions $M$ and $\phi$, thus we assume that  an $\sFilth$-reformulation should mention $M_\bullet^{\{\phi\}}$.

Now note that this definition considers a linear order $(\omega,<)$, and 
thus an $\sFilth$-reformulation should probably mention some $\sFilth$-object associated with a linear order: we already know three 
 $\omega^<_\bullet, \omega_\bullet^{\leq\text{tails}}$ or $\omega_\bullet^{\{\leq\}}$.
We should probably pick the latter as it is model-theoretic and is the object associated with the linear order $(\omega,<)$ as a structure. 

Also note that OP${}^1(\phi)$  ignores elements of $M$ not occurring in the sequence. This suggests us we should adjoin to $\omega$ a new element $\star$
and consider filters which ``ignore'' $\star$. This leads to the following modification of  $\omega_\bullet^{\{\leq\}}$: 
\bi\item 
the underlying simplicial set is $|\{\star\}\sqcup \omega|_\bullet$
\item the filter on $(\{\star\}\sqcup \omega)^n= |\{\star\}\sqcup \omega|_\bullet(n_\leq)$ is generated the set of all sequences such that
the elements of $\omega$ (i.e.~not $\star$) occur in monotone order
$$\left\{ (n_1,...,n_k)\in \{\{\star\}\sqcup \omega\} : 
n_i\neq\star\,\&\, n_j\neq \star \implies n_i\leq n_j\text{  for all } 1\leq i\leq j\leq n\right\}$$ 
\ei

Let us denote this simplicial filter as $(\{\star\}\sqcup \omega)_\bullet^{\{\leq\}}$. 

Hence, we have two $\sFilth$-objects, and want to  formulate (the full) OP. 
We consider the map $M\lra \{\star\}\sqcup \omega$ sending $a_i$ to $i$ for $i\in\omega$, and everything else into $\star$. 
This map is surjective, and OP${}^1(\phi)$ for any binary $\phi(-,-)$  implies this map is continuous. 
Therefore it occurs to us to reformulate (the full) OP as
\bi\item
there is a surjective $\sFilth$-morphism $M_\bullet\lra (\{\star\}\sqcup \omega)_\bullet^{\{\leq\}}$.
\ei
As we will see in Proposition~\ref{OP}, this is indeed equivalent to OP, at least when say $M=M^{eq}$ has elimination of imaginaries. .

\begin{rema} We would like to make a meta-mathematical remark. We find it must amusing that such a simple-minded
transcription produces the right result, and we think it calls for an explanation. At the very least one should 
collect and systematize examples where such a simple-minded, mechanical transcription produces correct results. 
\end{rema}

\subsubsection{A less simple-minded transcription of OP}
Now let us to a less simple-minded transcription of OP${}^1(\phi)$  which would give us a proof of our simple-minded conjecture above.

We may assume that the sequence $a_i$ is indiscernible (provided $M$ is saturated enough), and thus rewrite 
OP${}^1(\phi)$ in terms of finite indiscernible sequences as: 
\bi
\item[(OP${}^1(\phi)_{EM}$)] for each $i,j,k\in\omega$, the following implication holds: 
\bi\item (i) if the  3-sequence $(a_i,a_j,a_k)$ is $\phi$-indiscernible then 
 (ii) the 3-sequence $(i,j,k)$ is $<$-indiscernible 
\ei\ei
This trivially holds if $\phi$ has the order property.  In the other direction, 
 by indiscernability of $(a_i)$ assume that $\phi(a_i,a_j)$ whenever $i<j$. 
Pick  $i<j<k$ and consider the tuple $(a_j,a_i,a_k)$ with two first elements $a_i,a_j$ permuted.
By OP$_{EM}$  it is not $\phi$-indiscernible because $(j,i,k)$ is not $<$-indiscernible. 
However, 
by assumption $\phi(a_i,a_k)$ and $\phi(a_j,a_k)$,
and thus indiscernability may fail only if $\neg\phi(a_j,a_i)$, 
as required by the order property with respect to $\phi$.

It can easily be checked that (OP${}^1(\phi)_{EM}$) merely states continuity of the map $M^3\lra (\{\star\}\sqcup \omega)^3$ 
sending $a_i$ to $i$ for $i\in\omega$, and everything else into $\star$,
with respect to the appropriate filters, namely
 the $\phi$-EM-filter on $M^3$ and the filter on  $(\{\star\}\sqcup \omega)^3$ considered above. 

Thus we obtain
\begin{propo}\label{OP} For a sufficiently saturated model $M$, say with elimination of imaginaries, the following are equivalent:
\bi
\item $M$ has NOP
\item in $\sFilth$ there is no surjection $M_\bullet\lra (\{\star\}\sqcup \omega)_\bullet^{\{\leq\}}$
\item the following lifting property holds:
$$
\bot \lra M_\bullet
\,\rtt\,
\bigsqcup_{n\in\omega} (\{\star\}\sqcup n)_\bullet^{\{\leq\}} \lra  (\{\star\}\sqcup \omega)_\bullet^{\{\leq\}} 
$$
\ei\end{propo}
\begin{proof} Recall that an $\sFilth$-morphism $M_\bullet\lra \{\{\star\}\sqcup \omega\}_\bullet^{\{\leq\}}$ is necessarily 
induced by a map $|M|\lra \{\star\}\sqcup \omega$. The lifting property says that the image of any continuous such  map is bounded in $\omega$;
this is equivalent to saying there is no such surjective continuous map. Arguments preceding the observation establish 
 the equivalence of the latter to NOP.
\end{proof}

%
%
%
%
%
%
%
%

\subsubsection{NOP and NSOP}\label{NSOP}

Let $M$ be a model. Assume that $M$ fails NSOP, i.e.~there is a parameter-free formula 
$\phi(x,y)$ defining a linear order $\leq_\phi$ on $M$. 
Let $M_\bullet^{\{\leq_\phi\}}$ the simplicial set $|M|_\bullet$
equipped with the filter of the $\phi$-indiscernible neighbourhood; this is very similar to the object 
associated with the model $M$ considered in the language consisting only of the formula $\phi(-,-)$.
Explicitly in terms of the linear order $\leq_\phi$ this filter is described as follows:
a neighbourhood is a set contacting all the $\leq_\phi$-monotone sequences, both increasing and decreasing.

Evidently, definability of $\leq_\phi$ implies there is a surjective 
$\sFilth$-morphism $$M_\bullet \lra M_\bullet^{\{\leq_\phi\}}$$ 
In the same simple-minded way as with OP this leads us to ask whether NSOP is equivalent 
existence of such a continuous surjective morphism 
for some linear order $\leq$.

However, we are only able to show that this condition implies OP
and, as we just saw, is implied by NSOP.  

Let us show it implies OP. 
Indeed, by continuity of $$M_\bullet(3_\leq) \lra M_\bullet^{\{\leq_\phi\}}(3_\leq)$$ 
there is a finite set of $1$- and $2$-ary formulas $\Sigma$ such that  
any $\Sigma$-indiscernible sequence of length $3$ is monotone wrt $\leq$.

Let $(a_1,a_2,a_3)\in M$ be an indiscernible sequence in $M$.
The sequence $(a_2,a_1,a_3)$ is not $\leq$-indiscernible, hence by continuity 
it is not $\phi$-indiscernible in $M$ for some formula $\phi$, necessarily of arity 
$2$. That means that $\phi(a_1,a_2) \,\&\,\neg \phi(a_2,a_1)$. Now pick $(a_1,a_2,a_3)$ 
to be a start of an infinite indiscernible sequence. This shows that $\phi(-,-)$ has the order property.

This leads to the following observation:

\begin{propo} Let $M$ be an infinite model. The following implications hold:

NOP$\implies$(ii)$\Leftrightarrow$(ii)$'\Leftrightarrow$(ii)$''\Leftrightarrow$(ii)$'''\Leftrightarrow$(iii)$\implies$NSOP

FIXME: FIXME:: probably it is easy to prove all are equivalent to NSOP. 

\bee

\item[(i)] model $M$ has NSOP.
\item[(ii)] there is no linear preorder $\leq$ on $M$ with infinite chains such that 
the identity map $\id:M_\bullet\lra M_\bullet^{\{\leq\}}$ is continuous

\item[(ii)$'$] in $\sFilth$ there is no surjection $\id:M_\bullet\lra I_\bullet^{\{\leq\}}$
to an infinite structure $(I,\leq)$ where $\leq$ is a linear preorder with infinite chains

\item[(ii)$''$]  in $\sFilth$ there is no surjection $\id:M_\bullet\lra \alpha_\bullet^{\{\leq\}}$
for an infinite ordinal $\alpha$ 

\item[(ii)$'''$] in $\sFilth$ there is no infinite linear order $(I,\leq)$ and a surjection 
$\id:M_\bullet\lra I_\bullet^{\gtrless}$ 
where $ I_\bullet^{\gtrless}$ is the simplicial set $|I|_\bullet$
equipped with the filter on $|I|^n=I_\bullet(n_\leq)$ is 
generated by the subset of monotone sequences
(fixme: skip this item, this explicit description is more confusing than helpful?)

\item[(iii)] the following lifting property holds for each limit cardinal $\alpha$:
$$
\bot \lra M_\bullet
\,\rtt\,
\bigsqcup_{\beta<\alpha} \beta_\bullet^{\{\leq\}} \lra \alpha_\bullet^{\{\leq\}} 
$$
\eee\end{propo}

\begin{proof} The only non-trivial implication is NOP$\implies$(ii)$'$.

NOP$\implies$(ii)$'$: By Ramsey theory there is an infinite indiscernible sequence such that its image in $I$ is infinite. 
Let $(a_1,a_2,a_3)\in M$ be a subsequence of such an indiscernible sequence in $M$.
The sequence $(a_2,a_1,a_3)$ is not $\leq$-indiscernible, hence by continuity 
it is not $\phi$-indiscernible in $M$ for some formula $\phi$, necessarily of arity 
$2$. That means that $\phi(a_1,a_2) \,\&\,\neg \phi(a_2,a_1)$. 
This gives us the order property. This finishes the proof of this case.
FIXME: TODO: >>get strict order property if possible??

$\neg(i)\implies \neg(ii)$: take $\leq$ to be the definable linear order on $M$

$ \neg(ii)\implies  \neg(ii)'$: take $(I,\leq)=(M,\leq)$

$ \neg(ii)'\implies  \neg(ii)''$: take a surjection $I^\leq\lra \alpha^\leq$; it induces an $\sFilth$-surjection 
$ I_\bullet^{\{\leq\}} \lra  \alpha_\bullet^{\{\leq\}}$

$ \neg(ii)''\Leftrightarrow  \neg(ii)'''$: this is just an explicit reformulation

$(ii)''\Leftrightarrow (iii) $ 
the lifting property says that the image of any map $M_\bullet \lra \alpha_\bullet^{\{\leq\}}$ is bounded by some $\beta<\alpha$.
Any surjection as in $\neg(ii)''$ would fails this; in the other direction, such a map is surjective on its image, which is some 
infinite cardinal.

\end{proof}

\subsubsection{NSOP${}_n$: questions}

Let $(I,\leq)$ be a preorder and $n>0$.  Let $I_\bullet^{\{\leq_n\}}$ be the simplicial set $|I|_\bullet$ equipped with the following filter:
\bi
\item a neighbourhood is a subset containing all sequences which can be split into at most $n$ monotone subsequences
\ei

Note that for $n=1$ it holds $I_\bullet^{\{\leq_n\}}= I_\bullet^{\{\leq\}}$.

\begin{todo} What is the model theoretic meaning of the following analogue of (iii) above ? Is it related to NSOP${}_n$?
Find a similar lifting property related to NSOP${}_n$. 
\bi
\item[(iii)${}_n$] the following lifting property holds for each cardinal $\alpha$:
$$
\bot \lra M_\bullet
\,\rtt\,
\bigsqcup_{\beta<\alpha} \beta_\bullet^{\{\leq_n\}} \lra \alpha_\bullet^{\{\leq_n\}} $$ 
\ei
\end{todo}

\subsubsection{NSOP: analogy to compactness of topological spaces (finite subcover property)} 

Let  $ \alpha^>$ denote the ordinal $\alpha$
considered as a topological space 
with the initial interval topology (i.e.~the open subsets are initial intervals $\{\gamma: \gamma<\beta\}$, $\beta<\alpha$).
The object $\alpha^>_\bullet:\sFilth$ is described as follows: the underlying simplicial set $|\alpha|_\bullet$ is corepresented
by the set $\{\beta:\beta<\alpha\}=\alpha$, and the filter on $|\alpha|_\bullet(n_\leq)=\alpha^n$ is generated by the set of 
all weakly decreasing sequences
$$\{ (\beta_1,...,\beta_n)\in \alpha^n:\beta_1\geq..\geq\beta_n\}
$$
Note the filter on $\alpha_\bullet^{\{\leq\}}$ is different, and generated by 
the set of all weakly decreasing or  weakly increasing  sequences
$$\{ (\beta_1,...,\beta_n)\in \alpha^n:\beta_1\geq..\geq\beta_n\text{ or }\beta_1\leq ..\leq \beta_n\}
$$

\begin{propo} The following are equivalent for a connected topological space $X$:
\bi
\item $X$ is compact, i.e.~each open cover of $X$ has a finite subcover
\item $X$ cannot be represented as an increasing union of open subsets, i.e.~there 
is no limit ordinal $\alpha$, open subsets $U_\beta\subsetneq X$ such that $X=\bigcup_{\beta<\alpha} U_\beta$ 
and $U_\gamma\subsetneq U_\beta$ for $\gamma<\beta<\alpha$
\item there is no surjection $X \lra \alpha^>$ for a limit ordinal $\alpha$

\item in the category of topological spaces, each map  $X \lra \alpha^>$ factors via some $\beta^>\lra\alpha^>$, $\beta<\alpha$,
i.e.~the following lifting property holds:
$$\bot \lra X \rtt \bigsqcup_{\beta<\alpha} \beta^{>} \lra \alpha^{>}$$

\item in the category $\sFilth$, each map  $X \lra \alpha^>_\bullet$ factors via some $\beta^>_\bullet\lra\alpha^>_\bullet$, $\beta<\alpha$, 
i.e.~the following lifting property holds:
$$\bot \lra X_\bullet \rtt \bigsqcup_{\beta<\alpha} \beta^{>}_\bullet \lra \alpha^{>}_\bullet$$

\ei\end{propo}
\begin{proof} Easy. To see (2)$\Leftrightarrow$(3), take the $U_\beta$'s to be the preimages of the initial intervals in $\alpha$.
Note that the lifting property reformulation in $Top$ but not in $\sFilth$ essentially uses that $X$ is connected: in $\sFilth$
it is enough that the underlying simplicial set of $X$ is connected (as a simplicial set).
\end{proof}

\section{Appendix. Ramsey theory and indiscernible in category theoretic language}

This exposition is intended for a category-theory minded reader. 
We formulate in terms of simplicial sets some of Ramsey theory and the definition of the $\sFilth$-objects we associate with models.

\subsection{Ramsey theory}
 Ramsey theory admits a description in terms of $\sFilth$. 

\subsubsection{$c$-homogeneous simplicies} 
Let $X_\bullet:\sSet$ be a simplicial set, 
and let a ``colouring'' $c:X_\bullet(n_\leq)\lra C$ be an arbitrary map.

Say a simplex $x\in  X_\bullet(m_\leq)$ is {\em $c$-homogeneous} iff 
all its {\em hereditary non-degenerate} faces have the same $c$-colour, where we say 
that a simplex $x\in  X_\bullet(m_\leq)$ is {\em hereditarily non-degenerate}
iff a face  $x[i_1< ... < i_n]$ is non-degenerate whenever $1\leq i_1<... <i_n\leq m$.


The $c$-homogeneous simplicies form a subobject of $X_\bullet$. 
Call a subset of $ X_\bullet(m_\leq)$ a {\em $c$-neighbourhood of the main diagonal in $ X_\bullet(m_\leq)$} 
if it contains all the $c$-homogeneous simplicies in  $X_\bullet(m_\leq)$. 
This notion of a neighbourhood defines the {\em simplicial filter of $c$-neighbourhoods of the main diagonal}. 

Ramsey theory implies that if the set $C$ of colours is finite  and  a simplicial set $X_\bullet$ 
has hereditarily non-degenerate simplicies of arbitrarily high dimension, then 
there are $c$-homogeneous simplicies of of arbitrarily high dimension.

Now consider the equivalence relations of having  hereditarily non-degenerate faces being of the same colour:
\bi\item
$x \approx_c y$ iff for arbitrary $1\leq i_1\leq ... \leq i_n\leq m$
\bi\item 
$c(x[i_1\leq ... \leq i_n])=c(y[i_1\leq ... \leq i_n])$
whenever both faces $x[i_1\leq ... \leq i_n]$ and $y[i_1\leq ... \leq i_n]$ are hereditarily non-degenerate
\item 
$x[i_1\leq ... \leq i_n]$ is hereditarily non-degenerate iff $y[i_1\leq ... \leq i_n]$ is hereditarily non-degenerate.
\ei\ei

Consider the quotient $$c_\bullet:X_\bullet\lra C_\bullet, \ \ \ C_\bullet(n_\leq):=X_\bullet(n_\leq)/\approx_c$$ by these equivalence relations.
This gives a morphism to the filter of main diagonals on $C_\bullet$ from the simplicial filter of $c$-neighbourhoods of the main diagonal on  $X_\bullet$.

Moreover, we can define the simplicial filter of  $c$-neighbourhoods of the main diagonal on  $X_\bullet$
as the filter pulled back by this factorisation map from the filter of main diagonals on $C_\bullet$. 

\subsubsection{Filters associated with models} Let us not observe that we can apply this construction 
to formulas in the language of a structure.


Let $M$ be a structure.  
Consider an formula
$\phi(x_1,...,x_r)$ of the language of $M$ as a colouring $\varphi:M^r\lra\{\text{true},\text{false}\}$,
and equip the simplicial set $|M|_\bullet$ corepresented by the set of elements of $M$ 
with the simplicial filter of $\phi$-neighbourhoods of the main diagonal. 

For a set of formulas $\Sigma$, let $M_\bullet^\Sigma$ denote  the simplicial set $|M|_\bullet$
equipped with the filters generated by the $\phi$-neighbourhoods of the main diagonal for all $\phi\in\Sigma$.

This is a brief description of our key construction, which is defined in Definition~\ref{def:Mb} in a set-theoretic language.

\section{Appendix. Conclusions and Speculations.}\label{app:specs}
\subsubsection*{6.1. Topological intuition/vision} 
The category\footnote{We suggest to pronounce $\sFilth$ as $sF$                                                                       
as it is visually similar to                                                                                                                                
$s\Phi$ standing for ``{\em s}implicial $\phi$ilters'',                                                                                                     
even though it is unrelated to the actual pronunciation of these symbols coming from the Amharic script.
A script rare in mathematics allows us to denote the category of filters by a single letter $\Filth$
reminiscent of $\Phi$, yet avoid overusing letters.}
$\sFilth$ of simplicial filters we use carries the intuition of point-set topology:
\href{https://mishap.sdf.org/6a6ywke.pdf}{[6,\S3]} argues that the description of the intuition of general topology in the Introduction to the book of 
[Bourbaki] on General Topology
can be understood to apply to $\sFilth$ almost verbatim. 
$\sFilth$ 
can be used as one of ``structures which give a mathematical content to the intuitive notions of limit, continuity and neighbourhood'',
but is somewhat more flexible than the usual category of topological spaces:
In $\sFilth$ the notion of limit 
 \href{https://mishap.sdf.org/6a6ywke.pdf}{[6,\S4.10, Ex.4.10.1.1,4.10.2.1,4.11.1.1]}, a Cauchy sequence [6.,Ex.4.10.1.2], 
and being locally trivial [6,\S4.8] 
can be expressed in terms of diagram chasing; these diagrams use an endomorphism of $\sFilth=Func(\Dop,\Filth)$ 
induced by an endomorphism of $\Dop$
which is not available in the category of topological spaces
but does play a role in the category of simplicial sets. In terms of $\sFilth$, an indiscernible sequence in a model
and a Cauchy sequence in a metric space are morphisms from the same object. 
This allows us to modify  the $\sFilth$-diagram expressing 
expressing the notion of limit in topological or metric spaces 
to define average/limit types, and we use this to define NIP by a Quillen lifting property
formally somewhat similar to the lifting property defining completeness.

Using a combination of simplicial methods and topological intuition one can
write several definitions of spaces of morphisms from one object to another; 
\href{https://mishap.sdf.org/Skorokhod_Geometric_Realisation.pdf}{[7,\S3]} gives an example of such a definition which can be used to define geometric realisation of a simplicial set. 
It is tempting to try to define a (model theoretically meaningful) notion of a space of maps between two models,
or, perhaps more plausibly, the space of indiscernible sequences in a model. 

The definition of being locally trivial is a pull-back diagram in $\sFilth$ which 
is formally meaningful for any object (nee \href{https://mishap.sdf.org/Skorokhod_Geometric_Realisation.pdf}{[7,\S3.4]} for a sketch and
for details \href{http://mishap.sdf.org/6a6ywke/6a6ywke.pdf}{[6,\S2.2.5,\S4.8]}).
Does it have model theoretic meaning for generalised Stone spaces ? 

The relation of the simplicial Stone space to the usual Stone space of a model
is similar to the relation of the uniform structure to the topological structure associated with a metric;
there is a forgetful functor $\sFilth\lra Top$ (cf.~\href{https://mishap.sdf.org/Skorokhod_Geometric_Realisation.pdf}{[7,\S12.6.3]}, also \href{http://mishap.sdf.org/6a6ywke/6a6ywke.pdf#24}{[6,\S2.2.4]}) which takes
$M_\bullet$ as defined in Appendix~\ref{def:Mb} 
into the usual $1$-Stone space of $M$ (after taking the quotient). 
Hence one may ask whether  the usual machinery of Stone spaces, e.g.~Cantor-Benedixon ranks, 
meaningfully generalises to $\sFilth$.

\subsubsection*{6.2. Geometric vision} Our approach shows a formal analogy between indiscernible and Cauchy sequences:
both are morphisms from the same object, one to (the generalised Stone space of) a model and one to 
the uniform space (considered as an object of our category). Taking the limit of a sequence in a metric or uniform space
corresponds to a certain lifting property involving an endomorphism of $\sFilth$; a modification of this lifting property
allows to talk about limit types and the characterisation of NIP in terms of average types.

\subsubsection*{6.3. Homotopy vision: a speculation} $\sFilth$ has two full
subcategories with homotopy theory (the category of topological spaces, and the
category of simplicial sets), and thus one may hope it is possible to develop
homotopy theory for $\sFilth$ itself. It is possible to define a model structure on $\sFilth$,
say by extending the model structures on these two subcategories ?  

We now sketch a couple of analogies between homotopy theory and model theory
produced by wishful thinking.

A path $\gamma:[0,1]\lra X$ in a metric space is also a morphism from a linear order 
(viewed as object of $\sFilth$ in some way than for a Cauchy sequence 
(\href{https://mishap.sdf.org/Skorokhod_Geometric_Realisation.pdf}{[7,\S2.4.2]},
~\href{http://mishap.sdf.org/6a6ywke/6a6ywke.pdf#24}{[6,\S4.3-4]})). 
Unfortunately, while it is formally correct to consider morphisms from the same object to a model, 
it does not seem a good notion. Still, it is tempting to speculate about the possibility of analogy between indiscernible sequences 
and paths. The following analogies come to mind but we do not know how to make sense of them.

What is a ``homotopy'' between indiscernible sequences ? A homotopy between paths is a family of paths ``compatible'' in some way. 
Arrays or trees of indiscernible sequences (FIXME: FIXME:, what's a correct way to phrase this?) in NTP-type (No Tree Properties) properties
come to mind: perhaps to be thought of as families of ''compatible'' indiscernible sequences. 
Or perhaps, if one is to entertain the idea that the failure of a tree property is analogous to failure of 
having every loop contractible, then two branches of an indiscernible tree or array demonstrating failure of a tree property 
represent two paths which cannot be homotoped into each other.

What are the ``ends'' of an indiscernible sequence ? Nothing comes to mind but types of finite tuples in it. 

What are ``connected components'' of a model ? There is a definition (\S\ref{M2:stable}, also \href{https://mishap.sdf.org/6a6ywke.pdf}{[6,\S4.7]})
of $\pi_0$ in the category of topological spaces 
in terms of Quillen negations, and it can be interpreted in $\sFilth$. We would think this definition would be too straightforward 
to give interesting results for models.

\section{Preliminary Appendix 8. Dechypering Question~\ref{que:stab_lr}}\label{app:8}
\def\JJJ{{\mathbf J}}
\def\qf{\text{qf}}

With very limited success, below we try to dechyper the notation of Question~\ref{que:stab_lr} and describe 
as much as possible in the model theoretic language the class of (stable, as we will see) models $M$ such that

\bi\item $M_\bullet\lra\top \,\in\,\left\{ (\Bbb C;+,*)_\bullet\lra \top \right\}^\text{lr}$.
\ei
Everything here is very preliminary. 

\subsection{Dechypering Question~\ref{que:stab_lr}(i)}

Call a model $M$ stable iff in $M$ any countable infinite indiscernible sequence is necessarily an indiscernible set. 

Call a morphism  $A_\bullet \lra B_\bullet$ {\em anti-$M$} iff each $\sFilth$-morphism 
$A_\bullet \lra M_\bullet$ factors as $A_\bullet \lra B_\bullet \lra M_\bullet$,
i.e.~in notation the following lifting property holds:
$$A_\bullet \lra B_\bullet \rtt M_\bullet\lra\top$$

In this terminology,  Proposition~\ref{main:stab:lift}(iv) says  that 
a model $M$ is stable iff morphism $\sFilth$-morphism $\omega_\bullet \lra |\omega|_\bullet$ is anti-$M$.

\begin{lemm}
The following are equivalent:
\bi
\item[(i)] $M_\bullet\lra\top \,\in\,\left\{ (\Bbb C;+,*)_\bullet\lra \top \right\}^\text{lr}$
\item[(ii)] $M$ is stable, and  in $\sFilth$ each anti-$(\Bbb C;+,*)$-morphism is anti-$M$
\ei
\end{lemm}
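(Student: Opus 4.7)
The direction $(ii) \Rightarrow (i)$ is immediate, as $(i)$ is literally the second conjunct of $(ii)$; the content of the lemma lies entirely in showing that $(i)$ forces $M$ to be stable. My plan is to use $\omega_\bullet^\leq \lra |\omega|_\bullet$ as a test morphism: first establish it is anti-$(\Bbb C;+,*)$, so that $(i)$ forces it to be anti-$M$, then deduce $\phi$-stability of $M$ for every $\phi$ via Proposition~\ref{main:stab:lift}.

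For the first task, $ACF_0$ is ($\omega$-)stable, so Proposition~\ref{main:stab:lift} applied to each ring-language formula $\phi$ yields $\omega_\bullet^\leq \lra |\omega|_\bullet \rtt (\Bbb C;+,*)_\bullet^{\{\phi\}} \lra \top$. Given a commuting square with top arrow $\omega_\bullet^\leq \lra (\Bbb C;+,*)_\bullet$---which, by the natural analog of Lemma~\ref{iIbtoMb} for $M^\Sigma_\bullet$ with $\Sigma$ the set of all formulas, corresponds to a fully $L$-indiscernible sequence $(c_i)_{i\in\omega}$ in $\Bbb C$ (with repetitions, infinitely extendable)---the diagonal lift must agree on $0$-simplices with the bottom arrow, hence is forced to be the underlying set-map $i \mapsto c_i$ viewed as a morphism of corepresented objects. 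Its continuity into $(\Bbb C;+,*)_\bullet$ then follows because stability of $ACF_0$ makes $\{c_i\}$ extend to an $L$-indiscernible \emph{set}, so any reordering of any tuple $(c_{i_1},\ldots,c_{i_n})$ (possibly with repetitions) can be augmented by further elements of the set to a $\Sigma'$-indiscernible-with-repetitions extension with arbitrarily many distinct elements, for any finite set of formulas $\Sigma'$. Hence $\omega_\bullet^\leq \lra |\omega|_\bullet$ is anti-$(\Bbb C;+,*)$, and by $(i)$ it is anti-$M$.

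The remaining step---deducing $\phi$-stability of $M$ for every $\phi$ from $\omega_\bullet^\leq \lra |\omega|_\bullet \rtt M_\bullet \lra \top$---is the main obstacle. The difficulty is that Proposition~\ref{main:stab:lift} characterises $\phi$-stability via the per-formula lifting against $\Mphib \lra \top$, whereas the lifting we have lives against the coarser target $M_\bullet \lra \top$; the filter on $M_\bullet$ is strictly finer than that on any $\Mphib$, so a priori fewer sources admit a morphism into it, and the two lifting properties are not formally equivalent. To bridge the gap I would assume $M$ is sufficiently saturated (as is standard throughout the paper): if some $\phi$ had the order property in $M$, Ramsey's theorem plus compactness would extract an infinite fully $L$-indiscernible sequence $(a_i)_{i\in\omega}$ in $M$ still witnessing the $\phi$-order property, and in particular not a $\phi$-indiscernible set. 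By the $M^\Sigma$-analog of Lemma~\ref{iIbtoMb} this sequence gives a morphism $\omega_\bullet^\leq \lra M_\bullet$; the lift established above, read back through Lemma~\ref{iIbtoMb}, would make $(a_i)_i$ an $L$-indiscernible set and in particular a $\phi$-indiscernible set---contradicting the $\phi$-order property. So $M$ is $\phi$-stable for every $\phi$, hence stable, and $(ii)$ follows.
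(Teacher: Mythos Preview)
Your argument is correct and follows the same skeleton the paper intends, but you are working harder than necessary in the last paragraph because you missed a convention the appendix sets up two sentences before the lemma.

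The paper does not give a proof; it treats the lemma as an immediate unpacking of the preceding definitions. Specifically: by definition of the Quillen orthogonals, $\left\{(\Bbb C;+,*)_\bullet\to\top\right\}^{\lrl}$ is exactly the class of anti-$(\Bbb C;+,*)$ morphisms, so condition (i) \emph{is} the statement ``every anti-$(\Bbb C;+,*)$ morphism is anti-$M$''. Hence (i) is literally the second conjunct of (ii), as you say. For the remaining conjunct the paper has already recorded, in the sentence just before the lemma, that ``$M$ is stable iff $\omega_\bullet^\leq\to|\omega|_\bullet$ is anti-$M$''; and since $(\Bbb C;+,*)$ is stable, $\omega_\bullet^\leq\to|\omega|_\bullet$ is anti-$(\Bbb C;+,*)$, hence anti-$M$ by (i), hence $M$ is stable. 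That is the whole intended argument.

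The reason your final paragraph feels like an obstacle is that you read ``stable'' as the usual per-formula notion and then try to recover it from the coarser lifting against $M_\bullet$ via Ramsey and saturation. But the appendix opens with the local convention ``Call a model $M$ stable iff in $M$ any countable infinite indiscernible sequence is necessarily an indiscernible set'', i.e.\ full $L$-indiscernibility, not $\phi$-indiscernibility. Under that convention the equivalence ``$M$ stable $\Leftrightarrow$ $\omega_\bullet^\leq\to|\omega|_\bullet$ is anti-$M$'' is just the full-$\Sigma$ analogue of Lemma~\ref{iIbtoMb}, with no gap between $M_\bullet$ and $M_\bullet^{\{\phi\}}$ to bridge. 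Your Ramsey/saturation detour does recover the standard notion of stability (and is the honest thing to do if one wants that), but for the lemma as stated in the paper it is superfluous.
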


At the moment I do not see how to simply (ii) further. What we can do, however, 
is to write from it means for (ii) to hold a particular class of morphisms.

\subsubsection{Extending EM-representations} Here I describe a class of $\sFilth$-morphisms associated with structures 
and the notion of EM-representation. In the next subsection I describe a more general class of morphisms.

Let $\JJJ$ contain (as substructure) a reduct of a substructure of $\JJJ$. This corresponds to an $\sFilth$-morphism 
$\III^\qf_\bullet \lra \JJJ^\qf_\bullet$. Let us slightly generalise the notion of EM-representation: 
\begin{quote}
Let $\III$ and $M$ be structures, and let $\Sigma_{\III}$ and $\Sigma_M$ be sets of formulas in the language of $\III$ and of $M$, resp.
We say that  {\em a structure $\III$ $(\Sigma_{\III},\Sigma_M)$ EM-represents a model  $M$ with a function $f:|I|\lra |M|$} 
iff for each $n>0$, and each finite subset $\Upsilon\subset  \Sigma_{M}$ of formulas in the language of $M$
there exists a finite subset $\Delta\subset  \Sigma_{\III}$ of formulas in the language of $\III$  such that
\bi
\item the image under $f:|I|\lra |M|$ of each 
$\Delta$-indiscernible sequence in $\III$ is necessarily
also $\Upsilon$-indiscernible in $M$, i.e.~for each $\Delta$-indiscernible sequence $(a_i)$ in $\III$ 
its image $(f(a_i)_i)$ is  $\Upsilon$-indiscernible in $M$
\ei
\end{quote}

\begin{lemm}
It holds (i)$\implies$(ii)${}_{EM}$ where
\bi
\item[(i)] $M_\bullet\lra\top \,\in\,\left\{ (\Bbb C;+,*)_\bullet\lra \top \right\}^\text{lr}$ (as above)
\item[(ii)${}_{EM}$] $M$ is stable, and 
if any $(\Sigma_{\III},L_{ACF})$EM-representation of $(\Bbb C;+,*)$ by $\III$ extends to an $(\Sigma_{\JJJ},L_{ACF})$EM-representation by $\JJJ$,
then  any $(\Sigma_{\III},L_{M})$EM-representation of $M$ by $\III$ extends to an $(\Sigma_{\JJJ},L_{M})$EM-representation by $\JJJ$.
\ei 
\end{lemm}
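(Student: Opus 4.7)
The plan is to reduce both clauses of (ii)${}_{EM}$ to direct applications of (i), after first translating the notion of EM-representation and its extension into $\sFilth$-morphisms and a lifting property. I will first extract stability, then establish the dictionary between EM-representations and morphisms in $\sFilth$, and finally unfold the hypothesis of (ii)${}_{EM}$ as a lifting property to which (i) can be applied verbatim.

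First, I would note that $(\Bbb C;+,*)$ is stable, so by Proposition~\ref{main:stab:lift} the morphism $\omega^{\leq}_\bullet\to |\omega|_\bullet$ lies in $\{(\Bbb C;+,*)_\bullet\to\top\}^{\mathrm{l}}$, i.e.~is anti-$(\Bbb C;+,*)$. Unfolding (i) as ``every anti-$(\Bbb C;+,*)$ morphism is anti-$M$'' (as in the preceding Lemma), this yields $(\omega^{\leq}_\bullet\to |\omega|_\bullet)\rtt (M_\bullet\to\top)$. Applying Proposition~\ref{main:stab:lift} once more, but now to $M$, gives stability of $M$.

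Second, I would give a precise dictionary: an $(\Sigma_\III,\Sigma_M)$-EM-representation of $M$ by $\III$ with map $f\colon|\III|\to|M|$ is the same data as an $\sFilth$-morphism $\III^{\Sigma_\III}_\bullet\to M^{\Sigma_M}_\bullet$ whose underlying map of sets is $f$, using the variant of $M^\Sigma_\bullet$ from Remark~\ref{rema:stone} (or, alternatively, the appropriate $M_\bullet^{\Sigma,\infty}$ of \S\ref{cat:reps}). The content is that the continuity condition, written out with the generators of the filters on the $n$-simplices, reads precisely ``for every finite $\Upsilon\subset\Sigma_M$ and every $n$ there is a finite $\Delta\subset\Sigma_\III$ such that the image of each $\Delta$-indiscernible $n$-tuple of $\III$ is $\Upsilon$-indiscernible in $M$'', which is the defining clause of $(\Sigma_\III,\Sigma_M)$-EM-representation. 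Under this dictionary, the inclusion-as-reduct-of-substructure data between $\III$ and $\JJJ$ is encoded by an $\sFilth$-morphism $i\colon\III^{\Sigma_\III}_\bullet\to\JJJ^{\Sigma_\JJJ}_\bullet$, and ``a representation by $\III$ extends to one by $\JJJ$'' becomes the assertion that every $\sFilth$-morphism $\III^{\Sigma_\III}_\bullet\to N^{L(N)}_\bullet$ factors through $i$, i.e.~the square
$$
\xymatrix{
\III^{\Sigma_\III}_\bullet \ar[r] \ar[d]_{i} & N^{L(N)}_\bullet \ar[d] \\
\JJJ^{\Sigma_\JJJ}_\bullet \ar[r] \ar@{-->}[ur] & \top
}
$$
admits a diagonal filler; in the notation of the preceding lemma, $i$ is anti-$N$.

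Third, I would apply (i) directly. By the dictionary, the hypothesis of (ii)${}_{EM}$ that every $(\Sigma_\III,L_{ACF})$-EM-rep of $(\Bbb C;+,*)$ by $\III$ extends to a $(\Sigma_\JJJ,L_{ACF})$-EM-rep by $\JJJ$ means exactly that $i$ is anti-$(\Bbb C;+,*)$. Since by (i) every anti-$(\Bbb C;+,*)$ morphism is anti-$M$, the morphism $i$ is anti-$M$ as well. Translating back via the dictionary, every $(\Sigma_\III,L_M)$-EM-rep of $M$ by $\III$ extends to an $(\Sigma_\JJJ,L_M)$-EM-rep by $\JJJ$, as required.

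The main obstacle is the second step: the translation between EM-representations and $\sFilth$-morphisms must be made precise despite the delicate interplay between EM-representations (defined via indiscernible sequences of \emph{distinct} elements) and the filters on $M^\Sigma_\bullet$ (whose generators involve either indiscernible sequences with repetitions or their infinitely extendable variants). I expect that the cleanest way is to work throughout with the variant of Remark~\ref{rema:stone}, in which the $n$-simplex filters are generated purely by $\Sigma$-indiscernible tuples; then continuity against the identity on underlying sets recovers the EM-representation condition without the $N$-extendability clause, and the argument proceeds as above. One should also verify that the morphism $i\colon\III^{\Sigma_\III}_\bullet\to\JJJ^{\Sigma_\JJJ}_\bullet$ is legitimately induced by the containment $\III\subseteq\JJJ^{\text{reduct of substructure}}$, which is immediate but worth checking on generators.
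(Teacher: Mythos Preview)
Your proposal is correct and follows precisely the approach the paper sets up. The paper itself does not give a formal proof of this lemma---it sits in a ``very preliminary'' appendix where the lemmas are stated without proof---but the surrounding text lays out exactly your three ingredients: the preceding lemma in that appendix reformulates (i) as ``$M$ is stable and every anti-$(\Bbb C;+,*)$ morphism is anti-$M$'', and the paragraph just before the statement asserts (without verification) the dictionary between EM-representations and $\sFilth$-morphisms $\III^{\Sigma_\III}_\bullet\to M_\bullet$, with the $\III\subseteq\JJJ$ data encoded as a morphism $\III^{\Sigma_\III}_\bullet\to\JJJ^{\Sigma_\JJJ}_\bullet$. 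Your Step~3 is then the one-line unfolding the paper evidently intends.

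Your identification of the main obstacle is also well placed and goes beyond what the paper says: the appendix does not address which variant of $M^\Sigma_\bullet$ makes the dictionary exact, and your suggestion to use the variant of Remark~\ref{rema:stone} (filters generated by $\Sigma'$-indiscernible tuples without the extendability clause) is the natural fix. The paper's \S\ref{cat:reps} asserts the dictionary for the $M_\bullet$ of Definition~\ref{def:Mb} but never proves it, so your care here is warranted rather than excessive.
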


\subsubsection{Filters on Cartesian powers of a set} 

Let $A$ be a set, and 
for each $0<n\in\omega$ let $\mathfrak F_n$ be a filter on $A^n$ such that
\bi\item
for each $m,n>0$, weakly increasing sequence $1\leq i_1\leq ... \leq i_n\leq m$, 
for each neighbourhood $\varepsilon\in \mathfrak F_n$ 
$$
\left\{ (t_1,...,t_m)\in A^m  :\, (t_{i_1},...,t_{i_n}) \in \varepsilon  \right\}\,\in\, \mathfrak F_m 
$$
\ei

(Such a sequence 
of filters gives rise to a simplicial filter $|A|^{\mathfrak F}_\bullet:\Dop \lra \Filth$ defined as follows:
\bi\item
$|A|^{\mathfrak F}_\bullet(n_\leq):=\mathfrak F_n$
\item for each weakly increasing sequence $1\leq i_1\leq ...\leq i_n\leq m$, 
the continuous map of filters $[i_1\leq ...\leq i_n]:I^\leq_\bullet(m_\leq)\lra I^\leq_\bullet(n_\leq)$
is given by 
$$  (t_1,...,t_m) \longmapsto  (t_{i_1},...,t_{i_n}) $$
\ei
The condition on the filters $\mathfrak F_n$ means exactly that these maps are continuous.)
 
Let $\mathfrak G_n$ on $A^n$, $n>0$ be another sequence  of filters with the same properties 
such that $\mathfrak F_n$ is finer than $\mathfrak G_n$ for each $n>0$.
(This means there $id:|A|\lra|A|$ induces a morphism $|A|^{\mathfrak F}_\bullet\lra |A|^{\mathfrak G}_\bullet$).

Say that $(A;\mathfrak F;\mathfrak G)$ have {\em $M$-extension property} iff 
any function 
$f:A\lra |M|$ such that
\bi\item for each $\phi$ in the language of $M$ for each $n>0$ 
there is a neighbourhood $\varepsilon\in\mathfrak F_n$
such that for each $(a_1,...,a_n)\in\varepsilon$ the sequence $f(a_1),...,f(a_n)$ is $\phi$-indiscernible\footnote{The condition on indiscernability may vary slightly, as the definition of $M_\bullet$ may vary slightly. For example, here you may rather require that
the sequence $f(a_1),...,f(a_n)$ is part of an arbitrarily long $\phi$-indiscernible sequence (ignoring the repetitions). Of course,
this change should be made everywhere at the same time.}
with repetitions
\ei
it holds that 
\bi\item for each $\phi$ in the language of $M$ for each $n>0$ 
there is a neighbourhood $\varepsilon\in\mathfrak G_n$ (sic) 
such that for each $(a_1,...,a_n)\in\varepsilon$ the sequence $f(a_1),...,f(a_n)$ is $\phi$-indiscernible with repetitions
\ei

\begin{lemm}
It holds (i)$\implies$(ii') and that (iii)$\Longleftrightarrow$(iii') where
\bi
\item[(i)] $M_\bullet\lra\top \,\in\,\left\{ (\Bbb C;+,*)_\bullet\lra \top \right\}^\text{lr}$ (as above)

\item[(iii)]  $M$ is stable, and for each $(A;\mathfrak F;\mathfrak G)$ as described above 
 $$A^{\mathfrak F}_\bullet \lra A^{\mathfrak G}_\bullet \rtt (\Bbb C;+,*)_\bullet \lra \top \text{ implies } A^{\mathfrak F}_\bullet \lra A^{\mathfrak G}_\bullet \rtt M_\bullet \lra \top$$ 
 
\item[(iii')]  $M$ is stable, and  for each $(A;\mathfrak F;\mathfrak G)$ as described above,
\bi\item[] if $(A;\mathfrak F;\mathfrak G)$ has the {\em $(\Bbb C;+,*)$-extension property}, then  
  $(A;\mathfrak F;\mathfrak G)$ has the {\em $M$-extension property}
\ei\ei 
\end{lemm}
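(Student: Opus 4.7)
The statement contains an apparent typographical slip: item (ii') is not enumerated among (i), (iii), (iii'), so I read the assertion as (i) $\implies$ (iii) together with (iii) $\Longleftrightarrow$ (iii'). My plan addresses both implications.

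For (i) $\implies$ (iii) the argument will be essentially definitional once stability is extracted. First I would derive stability of $M$ from (i) applied to the single morphism $\omega_\bullet^\leq \lra |\omega|_\bullet$: since $(\Bbb C;+,*)$ is a stable theory, Proposition~\ref{main:stab:lift}(iv) places this morphism in the left Quillen-orthogonal of $(\Bbb C;+,*)_\bullet \lra \top$; (i) then transports this membership into the left Quillen-orthogonal of $M_\bullet \lra \top$, and the same proposition delivers stability of $M$. The remaining content of (iii) is a direct unpacking of the definition of $\{(\Bbb C;+,*)_\bullet\lra\top\}^{\text{lr}}$: its hypothesis places the specific morphism $A^{\mathfrak F}_\bullet \lra A^{\mathfrak G}_\bullet$ in $\{(\Bbb C;+,*)_\bullet\lra\top\}^{\text{l}}$, and (i) then forces it to lift against $M_\bullet \lra \top$.

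For (iii) $\Longleftrightarrow$ (iii') the core step will be to show, for each fixed structure $N$ (applied separately with $N = (\Bbb C;+,*)$ and with $N = M$), that the lifting property $A^{\mathfrak F}_\bullet \lra A^{\mathfrak G}_\bullet \rtt N_\bullet \lra \top$ is equivalent to the $N$-extension property of $(A;\mathfrak F;\mathfrak G)$. Both $A^{\mathfrak F}_\bullet$ and $A^{\mathfrak G}_\bullet$ have as underlying simplicial set the corepresented $|A|_\bullet$, and the comparison morphism between them is the identity on underlying sets; the underlying sSet of $N_\bullet$ is similarly $|N|_\bullet$. By the Yoneda-type observation recalled in \S\ref{maps:coreps}, every morphism $A^{\mathfrak F}_\bullet \lra N_\bullet$ is induced by a unique function $f: A \lra |N|$, and any diagonal lift $A^{\mathfrak G}_\bullet \lra N_\bullet$ must be induced by the same $f$; hence a lift exists exactly when the same function $f$ is continuous with respect to the coarser filter data $\mathfrak G$.

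Finally I would unpack continuity using Definition~\ref{def:Mb}: a function $f: A \lra |N|$ induces an $\sFilth$-morphism $A^{\mathfrak H}_\bullet \lra N_\bullet$ (for $\mathfrak H \in \{\mathfrak F, \mathfrak G\}$) if and only if, for each formula $\phi$ of the language of $N$ and each $n > 0$, the preimage under the tuplewise map of a basic $\phi$-neighbourhood in $|N|^n$ lies in $\mathfrak H_n$; spelled out, this is precisely the clause appearing in the definition of the $N$-extension property. Taking $\mathfrak H = \mathfrak F$ on one side and $\mathfrak H = \mathfrak G$ on the other, then quantifying universally over $f$, gives the equivalence between the lifting property in (iii) and the extension property in (iii') for each $N$, and hence (iii) $\Longleftrightarrow$ (iii') overall (stability appears symmetrically on both sides). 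The main technical nuisance I foresee is aligning the ``extendable to an indiscernible sequence with at least $N'$ distinct elements'' clause of Definition~\ref{def:Mb} with the formulation of the $N$-extension property (cf.\ the paper's own footnote on the choice of filter variant): the match is clean provided both sides are generated by the same filter basis, but some routine uniformity in $n$, $\phi$, and the constant $N'$ will have to be verified before the equivalence reads off cleanly.
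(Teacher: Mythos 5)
Your reading of the statement is right: the ``(ii')'' is a slip (compare the later lemma in the same appendix, which reads ``(i)$\implies$(iv') and (iv)$\Longleftrightarrow$(iv')''), and the intended content is (i)$\implies$(iii) together with (iii)$\Longleftrightarrow$(iii'). Note that the paper itself supplies \emph{no} proof of this lemma --- Appendix~8 is explicitly labelled preliminary and states its lemmas bare --- so there is nothing to compare against line by line; I can only judge your argument on its merits and against what the paper gestures at elsewhere.

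On that basis your proposal is correct and is evidently the intended argument. The first implication is exactly the ``standard elementary diagram chasing calculation'' alluded to in \S3.4.1: stability of $\Bbb C$ puts $\omega^\leq_\bullet\lra|\omega|_\bullet$ into $\{(\Bbb C;+,*)_\bullet\lra\top\}^\lrl$ via Proposition~\ref{main:stab:lift}, so (i) forces $M_\bullet\lra\top$ to lift against it, giving stability of $M$; and the rest of (iii) is a literal unwinding of membership in $\{(\Bbb C;+,*)_\bullet\lra\top\}^\text{lr}$. For (iii)$\Longleftrightarrow$(iii') the key point is the one you isolate: both legs of $A^{\mathfrak F}_\bullet\lra A^{\mathfrak G}_\bullet$ and the target $N_\bullet$ have corepresented underlying simplicial sets, so by \S\ref{maps:coreps} every morphism and every candidate diagonal is induced by one and the same function $f:A\lra|N|$, and the lifting property degenerates into the purely filter-theoretic statement that $\mathfrak F$-continuity of $f$ implies $\mathfrak G$-continuity --- which is verbatim the $N$-extension property. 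You also correctly flag the only genuine technical wrinkle, namely that the extension property as written in the appendix matches the simpler filter variant of Remark~\ref{rema:stone} rather than the ``at least $N$ distinct elements'' clause of Definition~\ref{def:Mb}; the paper concedes this in a footnote (``this change should be made everywhere at the same time''), so it is a defect of the statement rather than of your proof. No gap beyond that.
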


In fact in Lemma above there is no need to consider $\mathfrak G_n$ to be defined on the same set as $\mathfrak F_n$, $n>0$.

Let $B$ be a set, and let filter $\mathfrak G_n$ be on $B^n$ satisfying the same properties as above.
Let $\iota:A\lra B$ be a map of sets.

Say that $(A;B;\mathfrak F;\mathfrak G)$ has {\em $M$-extension property} iff 
any function 
$f:A\lra |M|$ such that
\bi\item for each $\phi$ in the language of $M$ for each $n>0$ 
there is a neighbourhood $\varepsilon\in\mathfrak F_n$
such that for each $(a_1,...,a_n)\in\varepsilon$ the sequence $f(a_1),...,f(a_n)$ is $\phi$-indiscernible with repetitions
\ei
there is a function $\tilde f:B\lra |M|$ such that $f\circ \iota=\tilde f$ and it holds that 
\bi\item for each $\phi$ in the language of $M$ for each $n>0$ 
there is a neighbourhood $\varepsilon\in\mathfrak G_n$ (sic) 
such that for each $(a_1,...,a_n)\in\varepsilon$ the sequence $\tilde f (a_1),...,\tilde f(a_n)$ is $\phi$-indiscernible with repetitions
\ei

In fact I believe in the lemma below (i)$\Longleftrightarrow$(iv)$\Longleftrightarrow$(iv') 
and this is shown by a simple diagram chasing argument, but I am not sure yet. 

\begin{lemm}\label{lem:que:stab_lr:i:app}
It holds (i)$\implies$(iv') and that (iv)$\Longleftrightarrow$(iv')  where
\bi
\item[(i)] $M_\bullet\lra\top \,\in\,\left\{ (\Bbb C;+,*)_\bullet\lra \top \right\}^\text{lr}$ (as above)

\item[(iv)] $M$ is stable. and  for each $(A;\mathfrak F;\mathfrak G)$ as described above 
 $$ A^{\mathfrak F}_\bullet \lra A^{\mathfrak G}_\bullet \rtt (\Bbb C;+,*)_\bullet \lra \top \text{ implies } A^{\mathfrak F}_\bullet \lra A^{\mathfrak G}_\bullet \rtt M_\bullet \lra \top$$ 
 
\item[(iv')] $M$ is stable, and for each $(A;B;\mathfrak F;\mathfrak G)$ as described above
\bi\item[]
 if $(A;B;\mathfrak F;\mathfrak G)$ has the {\em $(\Bbb C;+,*)$-extension property}, then  
  $(A;B;\mathfrak F;\mathfrak G)$ has the {\em $M$-extension property} 
 \ei
\ei 
\end{lemm}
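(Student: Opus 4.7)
The claim splits into (i)$\implies$(iv'), which is a direct application of the calculus of Quillen negation, and the equivalence (iv)$\Longleftrightarrow$(iv'), whose non-trivial half requires a factorization argument.

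For (i)$\implies$(iv'), the plan is to unwind the double Quillen negation and read off the extension properties as lifting properties. By definition, $M_\bullet\lra\top\,\in\,\{(\Bbb C;+,*)_\bullet\lra\top\}^\text{lr}$ means: for every $\sFilth$-morphism $\varphi$ satisfying $\varphi\rtt(\Bbb C;+,*)_\bullet\lra\top$, one has $\varphi\rtt M_\bullet\lra\top$. Given data $(A;B;\mathfrak F;\mathfrak G)$, set $\varphi:=A^{\mathfrak F}_\bullet\lra B^{\mathfrak G}_\bullet$; unwinding the definitions shows that the $\mathbb{C}$-extension (resp.\ $M$-extension) property for the data is literally $\varphi\rtt(\Bbb C;+,*)_\bullet\lra\top$ (resp.\ $\varphi\rtt M_\bullet\lra\top$), so the implication is immediate. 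Stability of $M$ drops out by applying the same principle to $\varphi:=\omega^\leq_\bullet\lra|\omega|_\bullet$: Proposition~\ref{main:stab:lift} gives $\varphi\rtt(\Bbb C;+,*)_\bullet\lra\top$ because $\mathbb{C}$ is stable, so (i) yields $\varphi\rtt M_\bullet\lra\top$, whence $M$ is stable by the same proposition. The direction (iv')$\implies$(iv) is obtained by specializing (iv') to $B:=A$, $\iota:=\mathrm{id}_A$, in which case the two extension properties coincide verbatim.

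The non-trivial implication (iv)$\implies$(iv') will proceed by factoring $f:=(A^{\mathfrak F}_\bullet\lra B^{\mathfrak G}_\bullet)$ through an auxiliary object that realizes $\iota$ as a filter refinement on a fixed underlying set. Two natural factorizations are available: via the pullback filter $\iota^*\mathfrak G$ on $A^n$, giving $A^{\mathfrak F}_\bullet\lra A^{\iota^*\mathfrak G}_\bullet\lra B^{\mathfrak G}_\bullet$, or via the pushforward filter $\iota_*\mathfrak F$ on $B^n$, giving $A^{\mathfrak F}_\bullet\lra B^{\iota_*\mathfrak F}_\bullet\lra B^{\mathfrak G}_\bullet$. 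In both decompositions the ``identity-on-underlying-set'' leg lies in the class handled by (iv), so (iv) promotes its $\mathbb{C}$-lift to an $M$-lift, and since left lifting is closed under composition the only remaining task is to upgrade the ``$\iota$-leg''. The main obstacle is precisely this: the $\iota$-leg falls outside the scope of (iv), so (iv) cannot be applied directly. I plan to absorb it via a cobase change in $\sFilth$ (left lifting is preserved by pushouts, which exist in $\sFilth=\mathrm{Func}(\Dop,\Filth)$ as colimits are computed pointwise), enlarging the data so that the resulting morphism is a pushout of $f$ which lies in the class of (iv) while retaining the $\mathbb{C}$-lifting hypothesis; verifying that this transport works will likely re-use the saturation/compactness step from the proof of Proposition~\ref{conj:reps}, reflecting that the $\mathbb{C}$-to-$M$ transfer on the $\iota$-leg is fundamentally a stable-theory phenomenon.
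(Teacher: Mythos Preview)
The paper does not actually prove this lemma: the sentence immediately preceding it reads ``In fact I believe in the lemma below (i)$\Longleftrightarrow$(iv)$\Longleftrightarrow$(iv') and this is shown by a simple diagram chasing argument, but I am not sure yet.'' So there is nothing to compare against, and your proposal is already more than the paper offers.

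Your treatment of (i)$\implies$(iv') and of (iv')$\implies$(iv) is correct. The identification of the $N$-extension property for $(A;B;\mathfrak F;\mathfrak G;\iota)$ with the lifting property $A^{\mathfrak F}_\bullet\lra B^{\mathfrak G}_\bullet\,\rtt\,N_\bullet\lra\top$ is exactly the intended unwinding (modulo the tacit assumption, not made explicit in the appendix, that $\iota$ is continuous so that the left arrow exists in $\sFilth$). The stability clause follows as you say, directly from (i) via $\omega^\leq_\bullet\lra|\omega|_\bullet$ and Proposition~\ref{main:stab:lift}; note this morphism is \emph{not} of the corepresented-by-a-set form covered by (iv'), so you are right to derive stability from (i) rather than from the extension clause.

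For (iv)$\implies$(iv') your plan has a genuine gap. In the factorisation $A^{\mathfrak F}_\bullet\lra B^{\iota_*\mathfrak F}_\bullet\lra B^{\mathfrak G}_\bullet$ the second leg is identity-on-underlying-set, but you need it to satisfy the \emph{hypothesis} of (iv), namely to lift against $(\Bbb C;+,*)_\bullet\lra\top$. The $\Bbb C$-extension property for $(A;B;\mathfrak F;\mathfrak G)$ only says that each $h\!\circ\!\iota$ extends to \emph{some} $\tilde h$ continuous for $\mathfrak G$, not that the given $h$ itself is continuous for $\mathfrak G$; so the identity leg need not lift against $\Bbb C$, and (iv) does not apply. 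The pushout idea has the dual problem: closure of left lifting under cobase change goes the wrong way --- it would let you transport lifting \emph{from} an (iv)-type morphism \emph{to} its pushout, whereas you need to transport the $\Bbb C$-lifting hypothesis \emph{back} from $f$ to something (iv) can act on. Neither mechanism closes this loop, and the appeal to Proposition~\ref{conj:reps} is not obviously relevant: that argument extracts stability from a representability hypothesis, not a lifting transfer along a change of underlying set. In short, the equivalence (iv)$\Longleftrightarrow$(iv') is precisely where the author's own uncertainty lies, and your sketch does not yet resolve it.
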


\section{Preliminary Unfinished Appendix 9. An attempt to answer Question on Simplicity~\ref{que:simple}}\label{app:9}
\def\lwTw{{}^{<\omega}\omega}
\def\wTw{{}^{\omega}\omega}

This Appendix is not finished yet. Help in proofreading welcome. 

We follow the ``android'' approach of [GH] to transcribe to $\sFilth$ the definition of the tree property 
and the simple theory in [Tent-Ziegler]. 

\subsection{A shorter explanation}

We quote [Tent-Ziegler]:
\noindent\newline\includegraphics[width=\linewidth]{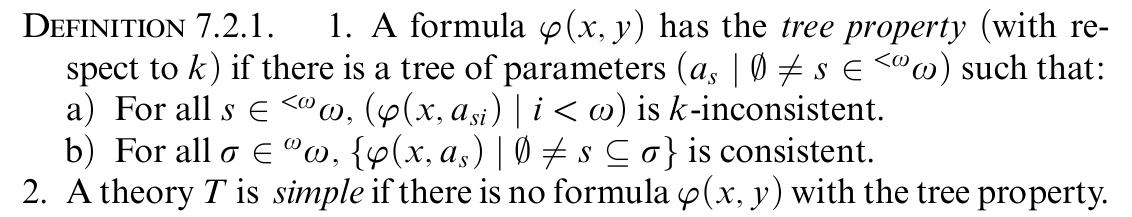}
%

\subsubsection{A reformulation in set theoretic language} We now reformulate item a) in a form convenient for diagram chasing reformulation. 
It is easy to see that item a) can be replaced by either of
\bi
\item[a$'$)] For each $s\in \lwTw$ there is an infinite set $S\subset \lwTw$ of descendants
such that $s'\supseteq s$ for each $s'\in S$ and the set $(\varphi(x, a_{s'} )\,\,|\,\,s'\in S )$ is $k$-inconsistent.

\item[a$''$)] For each $s\in \lwTw$ there is an infinite set $S\subset \lwTw$ of descendants
such that $s'\supseteq s$ for each $s'\in S$,  the set $(\varphi(x, a_{s'} )\,\,|\,\,s'\in S )$ is $k$-inconsistent,
and $S$ is linearly ordered by the lexicographic order.

\item[a$''$)] There is an infinite subset $\sigma\subseteq \lwTw$ such that for each $s\in \sigma $ 
there is an infinite set $S\subseteq \sigma $ of descendants
such that $s'\supseteq s$ for each $s'\in S$ and the set $(\varphi(x, a_{s'} )\,\,|\,\,s'\in S )$ is $k$-inconsistent.

\item[a$'''$)] There is an infinite subset $\sigma\subseteq \lwTw$ such that for each $s\in \sigma $ 
there is an infinite set $S\subseteq \sigma $ of descendants
such that $s'\supseteq s$ for each $s'\in S$, the set $(\varphi(x, a_{s'} )\,\,|\,\,s'\in S )$ is $k$-inconsistent,
and $S$ is linearly ordered by the lexicographic order.

\item[a$''''$)] There is an infinite subset $\sigma\subseteq \lwTw$ such that for each $s\in \sigma $ 
the set $S\subseteq \sigma $ of immediate descendants of $s$ is infinite and 
the set $(\varphi(x, a_{s'} )\,\,|\,\,s'\in S )$ is $k$-inconsistent,
and $S$ is linearly ordered by the lexicographic order.

\item[a$'''''$)] There is a subtree  $\sigma\subseteq \lwTw$ isomorphic to $\lwTw$ satisfying a). 

\ei

\begin{proof} Indeed, any of these items implies that there is a subtree of $\lwTw$ isomorphic to $\lwTw$ itself which satisfies a),
and any subtree satisfies b).\end{proof}

\subsubsection{Inconsistent instances of $\varphi(x,-)$}

To be able to discuss in $\sFilth$  inconsistency of sets of form $(\varphi(x, a_{s'} )\,\,|\,\,s'\in S )$,
quip $|M|^n$ with the filter generated by the set 
$$\varepsilon_n:=\{(b_1,...,b_n)\in |M|^n: M\models\exists x \bigwedge_{1\leq i\leq n} \varphi(x,b_i) \}$$
This turns the simplicial set $\hom-{|M|}$ corepresented by the set of
elements of $M$ into a simplicial filter which we denote by
$M_\bullet^{\exists x\varphi(x,-)}$. Accordingly, call a tuple $(b_1,...,b_n)\in |M|^n$ {\em small} or 
 {\em $\exists x\varphi(x,-)$-small} 
iff the set $\{\varphi(x,b_1),.., \varphi(x,b_n)\}$ is consistent, i.e.~$M\models \exists x \bigwedge_{1\leq i\leq n} \varphi(x,b_i)$.

\subsubsection{Item b)}


Item b) just says that the morphism $(\lwTw)_\bullet^\leq \lra M_\bullet^{\exists x\varphi(x,-)}$ defined by the parameters $(a_s)_s$ 
is continuous 
when  $(\lwTw)_\bullet^\leq $ is equipped with antidiscrete filters.
Recall that 
$$(\lwTw)^\leq_\bullet(n_\leq):= 
\left\{ ( s_1,...,s_{{i_n}}) : s_1,..,s_n\in \lwTw,  {s_1}\subseteq ...\subseteq {s_n} \right\}, n<\omega$$ 
is the set of all weakly increasing tuples in $\lwTw$ (which we here consider with the prefix order $\subseteq$).

\subsubsection{Item a)}


To talk about consistency of sets occuring in item a$'$)-a$''''$), we consider the morphism

$$(\lwTw^{\text{lex}})^\leq_\bullet\lra M_\bullet^{\exists x\varphi(x,-)}$$
where 
$\lwTw_{\text{lex}}$ is the {\em lexicographic} partial order on $\lwTw$.

Recall that 
$$(\lwTw^{\text{lex}})^\leq_\bullet(n_\leq):= 
\left\{ ( {s_1},...,s_n) : s_1,...,s_n\in \lwTw,  {s_1}\leq_{lex} ...\leq_{lex} {s_n} \right\}, n<\omega$$

Note that 
$$|(\lwTw)_\bullet^\leq(1_\leq)|=|(\lwTw_{\text{lex}})_\bullet^\leq(1_\leq)|=|\lwTw|$$

We have the morphism $|(\lwTw_{\text{lex}})^\leq_\bullet| \lra |M_\bullet|$ of simplicial sets.
Each of items a$''$)-a$''''$) says that the preimage of $\varepsilon_k\subseteq |M|^k$ does not intersect sets of certain form, namely the set of $k$-tuples required to be inconsistent in a copy of ${}^{<\omega}\omega$ by the tree property; 
equivalently, 
$\neg$a$''$)-$\neg$a$''''$) says that the preimage of $\varepsilon_k\subseteq |M|^k$, necessarily large under a continuous map, 
intersects sets of certain form.
Thus we ``read off'' the definition of a filter on $|(\lwTw_{\text{lex}})^\leq_\bullet(k_\leq)|$: a subset 
 is defined to be large 
iff it intersects any subset of form described in, say, item a$''''$), i.e.~in notation:
a subset $\delta\subset |(\lwTw_{\text{lex}})^\leq_\bullet(k_\leq)|$ is {\em large} iff
\bi
\item For any infinite subset $\sigma\subseteq \lwTw$ such that for each $s\in \sigma $ 
the set $S\subseteq \sigma $ of immediate descendants of $s$ is infinite,  
there is $s'\in\sigma$ and distinct immediate descendants $s'_1,...,s'_k$
such that  the tuple $({s'_1},..., {s'_k})\in\delta$ is ``$\delta$-small''. 
\ei

\subsubsection{The lifting property}
Thus we arrive at the following conjecture. 
\begin{enonce}{Conjecture} The formula $\varphi(x,y)$ does not have the tree property in $M$ iff the following lifting property holds:
$$
(\lwTw)_\bullet^\leq \lra  (\lwTw)_\bullet^\leq \cup (\lwTw_{\text{lex}})_\bullet^\leq 
\rtt 
 M_\bullet^{\exists x\varphi(x,-)}\lra \top
$$
\end{enonce}
\begin{proof}[Proof(sketch)] Assume  $\varphi(x,y)$ has the tree property in $M$, and let $(a_s\,|\,{s\in \lwTw})$ 
be a tree of parameters as in Definition 7.2.1. By b) the induced morphism  $(\lwTw)_\bullet^\leq\lra  M_\bullet^{\exists x\varphi(x,-)}$
is continuous. Consider the induced diagonal morphism of the underlying simplicial sets. 
By a) the preimage of the large subset generating the filter on $M^k$ does not intersect 
the set of non-degenerate simplicies in 
$ (\lwTw_{\text{llex}})_\bullet^\leq  (k_\leq)$, hence is not large. That is, the induced diagonal map is not continuous, 
and thereby the lifting property fails. 

Assume the lifting property fails. A morphism $(\lwTw)_\bullet^\leq\lra  M_\bullet^{\exists x\varphi(x,-)}$ is a tree of parameters 
$(a_s\,|\,{s\in \lwTw})$ satisfying item b) of Definition 7.2.1.
It induces a unique morphism $ |(\lwTw)_\bullet^\leq \cup (\lwTw_{\text{lex}})_\bullet^\leq | \lra | M_\bullet^{\exists x\varphi(x,-)}|$
of simplicial sets. This morphism fails to be  continuous iff for some $k$ the preimage $\varepsilon$ 
of the set of $\varphi(x,-)$-consistent tuples is not large, 
i.e.~there is an  isomorphic copy $\sigma\subseteq \lwTw$ of $\lwTw$ such that any simplex in $\varepsilon\cap \sigma(n_\leq)$ is degenerate.
That is, each $k$-tuple with distinct elements  in $(\sigma_{\text{lex}})_\bullet(n_\leq)$ is $\varphi(x,-)$-inconsistent,
i.e.~$\sigma$ satisfies item a). Hence, $\sigma$ is a witness to the tree property. 
\end{proof}

\subsection{Transcribing the definition} 

Now in a verbose manner we step-by-step follow  the ``android'' approach of [GH] to transcribe to $\sFilth$ the definition of the tree property
in [Tent-Ziegler].

\subsubsection{Inconsistent instances of $\varphi(x,-)$}

Equip $|M|^n$ with the filter generated by the set 
$$\{(b_1,...,b_n)\in |M|^n: M\models\exists x \bigwedge_{1\leq i\leq n} \varphi(x,b_i) \}$$
\begin{quote} Motivation: 
The definition 7.2.1 talks about the formulas 
$\exists x \bigwedge_{1\leq i\leq n} \varphi(x,b_i)$  implicitly,
or rather tuples satisfying these formulas.
\end{quote}
This turns the simplicial set $\hom-{|M|}$ corepresented by the set of
elements of $M$ into a simplicial filter which we denote by
$M_\bullet^{\exists x\varphi(x,-)}$. Accordingly, call a tuple $(b_1,...,b_n)\in |M|^n$ {\em small} or 
 {\em $\exists x\varphi(x,-)$-small} 
iff the set $\{\varphi(x,b_1),.., \varphi(x,b_n)\}$ is consistent, i.e.~$M\models \exists x \bigwedge_{1\leq i\leq n} \varphi(x,b_i)$.

\begin{quote}Motivation 2 (fixme:remove?):  
The definition speaks of consistency of formulas of form $\varphi(x, a_{s})$.
Which is what we use to express this as the property of continuity of a morphism in $\sFilth$. 
\end{quote}

\subsubsection{Item b)}
{\em
\bi
\item[b)] For all $\sigma\in \wTw$ $ \{\varphi(x, a_s ) \,|\, \emptyset \neq s\subseteq \sigma  \}$ is consistent.
\ei
}

This just says that the morphism $(\lwTw)_\bullet^\leq \lra M_\bullet^{\exists x\varphi(x,-)}$ defined by the parameters $(a_s)_s$ 
is continuous 
when  $(\lwTw)_\bullet^\leq $ is equipped with antidiscrete filters.
Recall that 
$$(\lwTw)^\leq_\bullet(n_\leq):= 
\left\{ ( s_1,...,s_{{i_n}}) : s_1,..,s_n\in \lwTw,  {s_1}\subseteq ...\subseteq {s_n} \right\}, n<\omega$$ 
is the set of all weakly increasing tuples in $\lwTw$ (which we here consider with the prefix order $\subseteq$).

\subsubsection{Item a)}

{\em
\bi\item[a)] For all $s\in \lwTw$, $(\varphi(x, a_{si} ) \,|\, i <\omega )$ is $k$-inconsistent.
\ei
}

Item a) considers consistency of tuples of formulas $(\varphi(x,a_{si})|i<\omega)$, and, implicitly, 
the linear orders $si\leq sj$ iff $i\leq j$, $s\in \lwTw$. 
Hence, we consider a simplicial set containing these tuples:
$$(\lwTw_{\text{fans}})^\leq_\bullet(n_\leq)= 
\left\{ ( {s{i_1}},...,{s{i_n}}) : s\in \lwTw, 1\leq {i_1}\leq ...\leq {i_n} <\omega\right\}, n<\omega$$
where 
$\lwTw_{\text{fans}}$ is the {\em fan} partial order defined by 
$$a_{si}\leq a_{s'i'}\text{ iff }s=s'\text{ and }i\leq i'$$

Note that 
$$(\lwTw)_\bullet^\leq(1_\leq)=(\lwTw_{\text{fans}})_\bullet^\leq(1_\leq)=\lwTw$$

We have the morphism $|{\lwTw_{\text{fans}}}_\bullet| \lra |M_\bullet|$ of simplicial sets.
Item a) says that the tuples with distinct elements (=non-degenerate simplicies) in the 
image of the morphism $|{\lwTw_{\text{fans}}}_\bullet(k_\leq)|\lra |M_\bullet(k_\leq)|=M^k$ lie 
outside of the large subset generating the filter on $M^{\exists x\varphi(x,-)}\bullet(k_\leq)$,
or, equivalently, outside of some large subset of that filter.

\subsubsection{The lifting property: first attempt}

So we see that a formula  $\varphi(x, y)$ with the tree property 
provides a counterexample to the following lifting property
in a very strong sense:
$$
(\lwTw)_\bullet^\leq \lra  (\lwTw)_\bullet^\leq \cup (\lwTw_{\text{fans}})_\bullet^\leq 
\rtt 
 M_\bullet^{\exists x\varphi(x,-)}\lra \top
$$

The map $|(\lwTw)_\bullet^\leq|
 \lra |M_\bullet^{\exists x\varphi(x,-)}|$ of simplicial sets
extends uniquely to a map of simplicial sets
$|(\lwTw)_\bullet^\leq \cup (\lwTw_{\text{fans}})_\bullet^\leq|
 \lra |M_\bullet^{\exists x\varphi(x,-)}|$.

Item b) says the map $(\lwTw)_\bullet^\leq \lra M_\bullet^{\exists x\varphi(x,-)}$ is continuous 
when the source is equipped with the antidiscrete filter.

Item a) says the map 
$(\lwTw)_\bullet^\leq(k_\leq) \cup (\lwTw_{\text{fans}})_\bullet^\leq (k_\leq) \lra M_\bullet^{\exists x\varphi(x,-)}(k_\leq)$
is {\em not} continuous if we equip with {\em any} filter which contains a large set extending $(\lwTw)_\bullet^\leq(k_\leq)$
by a non-degenerate simplex (=tuple with distinct elements).

\subsubsection{The filter appropriate to express item a)}

Thus we'd like to equip the source of the latter map with the finest filter  containing $(\lwTw)_\bullet^\leq(k_\leq)$
defined by a property which can be ``read off'' from Definition 7.2.1. If item a) is satisfied not by all the vertices 
but still by enough vertices to form a isomorphic copy $\sigma\subseteq \lwTw$ of $\lwTw$, then the tree property still fails, 
and there is no harm in a large set intersecting  $(\sigma_{\text{fans}})_\bullet^\leq (k_\leq)$ non-trivially. 
This suggests the following definition: 
we define a subset to be {\em large} iff it contains a tuple with distinct elements in $(\sigma_{\text{fans}})_\bullet^\leq (k_\leq)$ for 
each isomorphic copy  $\sigma\subseteq \lwTw$  of  $\lwTw$ in  $\lwTw$.

Finally, note that  $\sigma\subseteq \lwTw$ does not imply that  $(\sigma_{\text{fans}})_\bullet^\leq (k_\leq)\subseteq  (\lwTw_{\text{fans}})_\bullet^\leq (k_\leq)$ unless the immediate children in $\sigma$ are necessarily immediate children in $\lwTw$,
and there is no reason to assume this.

Hence, we modify the definition of $(\lwTw_{\text{fans}})_\bullet^\leq (k_\leq)$ so that it talks about arbitrary descendants
rather than the $si$'s:
$$|(\lwTw_{\text{antichains}})^\leq_\bullet(n_\leq)|:= 
\left\{ ( {{s_1}},...,s_n) : s_i\leq_{lex} s_j   \forall 1\leq i\leq j\leq n, \text{ and } 
 s_i\not\subseteq {s_j} \forall 1\leq i\neq j\leq n \right\}, n<\omega$$
where $\leq_{lex}$ is the lexicographic order on $\lwTw$.

A subset $\varepsilon\subseteq (\lwTw_{\text{antichains}})^\leq_\bullet(n_\leq)$ is {\em large} iff
for each isomorphic copy $\sigma\subseteq \lwTw$ of $\lwTw$
the set $\varepsilon\cap (\sigma_{\text{antichains}})_\bullet(n_\leq)$ contains a non-degenerate simplex, 
i.e.~a tuple with all elements distinct.

\subsubsection{The lifting property}
Thus we arrive at the following conjecture. 
\begin{enonce}{Conjecture} The formula $\varphi(x,y)$ does not have the tree property in $M$ iff the following lifting property holds:
$$
(\lwTw)_\bullet^\leq \lra  (\lwTw)_\bullet^\leq \cup (\lwTw_{\text{antichains}})_\bullet^\leq 
\rtt 
 M_\bullet^{\exists x\varphi(x,-)}\lra \top
$$
\end{enonce}
\begin{proof}[Proof(sketch)] Assume  $\varphi(x,y)$ has the tree property in $M$, and let $(a_s\,|\,{s\in \lwTw})$ 
be a tree of parameters as in Definition 7.2.1. By b) the induced morphism  $(\lwTw)_\bullet^\leq\lra  M_\bullet^{\exists x\varphi(x,-)}$
is continuous. Consider the induced diagonal morphism of the underlying simplicial sets. 
By a) the preimage of the large subset generating the filter on $M^k$ intersects 
$ (\lwTw_{\text{antichains}})_\bullet^\leq  (k_\leq)$ only by degenerate simplicies, 
hence is not large. That is, the induced diagonal map is not continuous, 
and thereby the lifting property fails. 

Assume the lifting property fails. A morphism $(\lwTw)_\bullet^\leq\lra  M_\bullet^{\exists x\varphi(x,-)}$ is a tree of parameters 
$(a_s\,|\,{s\in \lwTw})$ satisfying item a) of Definition 7.2.1.
It induces a unique morphism $ |(\lwTw)_\bullet^\leq \cup (\lwTw_{\text{antichains}})_\bullet^\leq | \lra | M_\bullet^{\exists x\varphi(x,-)}|$
of simplicial sets. This morphism fails to be  continuous iff for some $k$ the preimage $\varepsilon$ 
of set of $\varphi(x,-)$-consistent tuples is not large, 
i.e.~there is an  isomorphic copy $\sigma\subseteq \lwTw$ of $\lwTw$ such that $\varepsilon\cap \sigma(n_\leq)$ consists only of degenerate tuples.
That is, each $k$-tuple in $(\sigma_{\text{antichains}})_\bullet(n_\leq)$ is $\varphi(x,-)$-inconsistent,
i.e.~$\sigma$ satisfies item a). Hence, $\sigma$ is a witness to the tree property. 
\end{proof}

\subsubsection{The lifting property of stability} Notice that the lifting property  Proposition~\ref{main:stab:lift}(iv) 
of stability holds trivially for $ M_\bullet^{\exists x\varphi(x,-)}\lra \top $ for any model $M$
$$\omega_\bullet \lra |\omega|_\bullet \rtt   M_\bullet^{\exists x\varphi(x,-)}\lra \top $$
Also check whether it holds that
$$                                                                                                                                                             
(\lwTw)_\bullet^\leq \lra  (\lwTw)_\bullet^\leq \cup (\lwTw_{\text{antichains}})_\bullet^\leq                                                                  
\rtt                                                                                                                                                           
 M_\bullet^{\{\varphi\}}\lra \top                                                                                                                    
$$ 
where $M_\bullet$ is defined as in Proposition~\ref{main:stab:lift}.

\subsubsection{The homotopy intuition $\Bbb S^k\lra\Bbb D^k$ and $A\lra A\times I$} Recall our motivation in transcribing the tree property was to get a clue
towards homotopy theory for model theory and $\sFilth$. The formal analogy is to the lifting properties of topological spaces
$$ \Bbb S^k\lra\Bbb D^k \rtt X\lra\top \text{ and } A\lra A\times I\rtt  X\lra\top .$$
The first says that each sphere $ \Bbb S^k$ can be ``filled in'' with a disk $\Bbb D^k$ and thus is contractible, 
suggests an intuition that 
``$(\lwTw_{\text{antichains}})_\bullet^\leq$ fills in in a hole in $(\lwTw)_\bullet^\leq $'',
and that $M_\bullet^{\exists x\varphi(x,-)}$ has trivial $\pi_1$ or 
$M_\bullet^{\exists x\varphi(x,-)}\lra\top$  is an acyclic fibration.
The second is a path lifting property and homotopy extension property,
and suggest an intuition that  $(\lwTw_{\text{antichains}})_\bullet^\leq$
 is a homotopy (of branches?) of $(\lwTw)_\bullet^\leq $, and that 
$M_\bullet^{\exists x\varphi(x,-)}\lra\top$ is a fibration.

We feel that making sense of this intuition is one of the most pressing needs in our approach to model theory.

{\small\tiny \setcounter{tocdepth}{3} \tableofcontents}

\end{document}